\title{Riemannian Manifolds in Noncommutative Geometry}
\author{Steven Lord%
\thanks{email: \texttt{steven.lord@adelaide.edu.au},
\texttt{adam.rennie@anu.edu.au},
\texttt{joseph.varilly@ucr.ac.cr}}
\\[3pt]
School of Mathematical Sciences, University of Adelaide\\
Adelaide 5005, South Australia, Australia\\[6pt]
\and
Adam Rennie$^*$
\\[3pt]
Mathematical Sciences Institute,
Australian National University\\
Acton 0200, Canberra, Australia\\[6pt]
\and
Joseph C. V\'arilly$^*$
\\[3pt]
Escuela de Matem\'atica,
Universidad de Costa Rica,
San Jos\'e 2060, Costa Rica}
\date{February 2012}
\def\section{\@startsection{section}{1}{\z@}{-3.5ex plus -1ex minus
  -.2ex}{2.3ex plus .2ex}{\large\bf}}
\def\subsection{\@startsection{subsection}{2}{\z@}{-3.25ex plus -1ex
  minus -.2ex}{1.5ex plus .2ex}{\normalsize\bf}}
\numberwithin{equation}{section} 
\theoremstyle{plain} 
\newtheorem{thm}{Theorem}[section]
\newtheorem{lemma}[thm]{Lemma}
\newtheorem{prop}[thm]{Proposition}
\newtheorem{corl}[thm]{Corollary}
\theoremstyle{definition} 
\newtheorem{defn}[thm]{Definition}
\newtheorem{cond}{Condition} 
\theoremstyle{remark} 
\newtheorem{rmk}[thm]{Remark}
\newtheorem{eg}[thm]{Example}
\DeclareMathOperator{\Cliff}{{\C\ell}} 
\DeclareMathOperator{\Dom}{Dom}   
\DeclareMathOperator{\End}{End}   
\DeclareMathOperator{\Hom}{Hom}   
\DeclareMathOperator{\linspan}{span} 
\DeclareMathOperator{\Tr}{Tr}     
\DeclareMathOperator{\tr}{tr}     
\DeclareMathOperator{\tsum}{{\textstyle\sum}} 
\newcommand{\al}{\alpha}      
\newcommand{\bt}{\beta}       
\newcommand{\dl}{\delta}      
\newcommand{\eps}{\varepsilon} 
\newcommand{\Ga}{\Gamma}      
\newcommand{\ga}{\gamma}      
\newcommand{\La}{\Lambda}     
\newcommand{\la}{\lambda}     
\newcommand{\nb}{\nabla}      
\newcommand{\Om}{\Omega}      
\newcommand{\om}{\omega}      
\newcommand{\sg}{\sigma}      
\newcommand{\A}{\mathcal{A}}  
\newcommand{\Ahat}{{\widehat A}} 
\newcommand{\B}{\mathcal{B}}  
\newcommand{\C}{\mathbb{C}}   
\newcommand{\Cc}{\mathcal{C}} 
\newcommand{\cc}{\mathbf{c}}  
\newcommand{\CDA}{\mathcal{C_D(A)}} 
\newcommand{\Chat}{{\widehat C}} 
\newcommand{\Coo}{C^\infty}   
\newcommand{\D}{\mathcal{D}}  
\newcommand{\Dhat}{{\widehat\D}} 
\renewcommand{\d}{\mathrm{d}} 
\newcommand{\del}{\partial}   
\newcommand{\Dhatreg}{\langle\Dhat\rangle} 
\newcommand{\Domoo}{\Dom^\infty} 
\newcommand{\Dreg}{\langle\D\rangle} 
\newcommand{\Dtilde}{{\widetilde\D}} 
\newcommand{\E}{\mathcal{E}}  
\renewcommand{\H}{\mathcal{H}}  
\newcommand{\half}{\tfrac{1}{2}} 
\newcommand{\hatox}{\mathrel{\widehat\otimes}} 
\newcommand{\hookto}{\hookrightarrow} 
\newcommand{\I}{\mathcal{I}}  
\newcommand{\isom}{\simeq}    
\newcommand{\K}{\mathcal{K}}  
\renewcommand{\L}{\mathcal{L}} 
\newcommand{\N}{\mathbb{N}}   
\newcommand{\op}{\circ}       
\newcommand{\ovl}{\overline}  
\newcommand{\ox}{\otimes}     
\newcommand{\oxyox}{\otimes\cdots\otimes} 
\newcommand{\phihat}{\hat\phi} 
\newcommand{\Trw}{\Tr_\omega} 
\newcommand{\w}{\wedge}       
\newcommand{\x}{\times}       
\newcommand{\Z}{\mathbb{Z}}   
\newcommand{\Zz}{\mathcal{Z}} 
\newcommand{\3}{\sharp}       
\newcommand{\8}{\bullet}      
\renewcommand{\.}{\cdot}      
\renewcommand{\:}{\colon}     
\newcommand{\bbraket}[2]{\langle\!\langle#1\mathbin{|}
                          #2\rangle\!\rangle} 
\newcommand{\braket}[2]{\langle#1\mathbin{|}#2\rangle} 
\newcommand{\hideqed}{\renewcommand{\qed}{}} 
\newcommand{\reg}[1]{\langle#1\rangle} 
\newcommand{\set}[1]{\{\,#1\,\}}  
\newcommand{\stroke}{\mathbin|}     
\newcommand{\twobytwo}[4]{\begin{pmatrix}#1 & #2 \\
                               #3 & #4\end{pmatrix}} 
\newcommand{\word}[1]{\quad\mbox{#1}\quad} 
\def\pairL_#1(#2|#3){{}_{#1}(#2\stroke#3)} 
\def\pairR(#1|#2)_#3{(#1\stroke#2)_{#3}} 
\def\scal<#1|#2>{\langle#1\stroke#2\rangle} 
\begin{document}

\maketitle

\vspace{-2pc}

\begin{abstract}
We present a definition of Riemannian manifold in noncommutative
geometry. Using products of unbounded Kasparov modules, we show one
can obtain such Riemannian manifolds from noncommutative spin$^c$
manifolds; and conversely, in the presence of a spin$^c$ structure. We
also show how to obtain an analogue of Kasparov's fundamental class
for a Riemannian manifold, and the associated notion of Poincar\'e
duality. Along the way we clarify the bimodule and first-order
conditions for spectral triples.
\end{abstract}

\tableofcontents

\parskip=6pt
\parindent=0pt

\addtocontents{toc}{\vspace{-1pc}}

\section{Introduction}
\label{sec:intro}

The noncommutative geometry program of extending differential manifold
structures via the concept of a spectral triple has seen several
recent developments. Any closed Riemannian manifold $M$ endowed with a
spin$^c$ structure can be recovered from a spectral triple over its
algebra $C^\infty(M)$ of smooth functions, via the reconstruction
theorem of Connes in its several iterations
\cite{ConnesGrav,Crux,ConnesRecon}.

We present a definition of closed noncommutative Riemannian manifold.
In the absence of a spin$^c$ structure and associated Dirac operator,
our definition is modelled on the Hodge-de Rham operator acting on
differential forms. Classically, this construction relies only on a
choice of orientation and metric.

To produce examples, and to check the compatibility of our definition
with the existing definition of spin$^c$ manifolds, we show that:
\begin{enumerate}
\item[(A)]
given a spin$^c$ manifold, we can obtain a Riemannian manifold;
\item[(B)]
given a Riemannian manifold and a ``spin$^c$ structure'', we can
obtain a spin$^c$ manifold.
\end{enumerate}
These operations are mutually inverse when both are defined. 

Underlying these operations is Plymen's theorem~\cite{Plymen}, which
identifies spin$^c$ structures on a closed Riemannian manifold $M$
with Morita equivalences between the $C^*$-algebras $C(M)$, of
continuous functions on~$M$; and $\Cliff(M)$, of continuous sections
of the Clifford algebra bundle. To translate this into the language of
spectral triples, we reformulate such Morita equivalences, or
``spin$^c$ structures'', as holding between $C^\infty(M)$ and a
suitable ``smooth subalgebra'' of $\Cliff(M)$, while ensuring that the
link to $KK$-theory is preserved.

The main tool used throughout to marry modules and spectral triples is
the Kasparov product of unbounded Kasparov modules,
\cite{BaajJ,Kucerovsky}. This has been recently revisited by
Mesland~\cite{Mesland}. Since we utilise the Kasparov product to
produce our various spectral triples, we are also able to show that
the spin$^c$ and Riemannian notions of Poincar\'e duality are carried
into each other by the operations (A) and~(B). Another essential point
in these constructions is the identification of a noncommutative
analogue of Kasparov's fundamental class for a Riemannian manifold.

Several routes towards ``almost commutative'' spectral triples have
recently been taken. The thesis of Zhang~\cite{Zhang} introduces a
spectral triple over $C^\infty(M)$ in the non-spin$^c$ case, using
twisted $K$-theory to overcome the obstruction.
\'Ca\'ci\'c~\cite{Chachich} introduces spectral triples suitable for
vector bundles over~$M$, thereby extending the reconstruction theorem
to that case, at the price of weakening the ``orientability'' axiom.
Boeijink and van~Suijlekom \cite{BoeijinkS} instead formulate a
spectral triple over the Clifford algebra bundle, and relate it to the
spin$^c$ case using Kucerovsky's work, much as we do here.

By contrast, our approach to noncommutative Riemannian manifolds makes
no assumptions of commutativity. It originated in the thesis
\cite{LordThesis} of the first author (for an earlier attempt,
see~\cite{FroehlichGR}), and consists in replacing the
``noncommutative spin$^c$'' condition for spectral triples with a
``noncommutative Riemannian'' condition.

In Section~\ref{sec:mods-morita} we prepare the ground by examining
pre-Morita equivalences of hermitian modules over smooth subalgebras
of $C^*$-algebras. Section~\ref{sec:bimod-connections} develops some
tools for studying operators on finitely generated projective modules.
Here we introduce bimodule connections and characterise those
operators satisfying a first order condition on suitable bimodules.
After reviewing basic notions of spectral triples, we lay out a
bimodule twisting procedure that represents a Kasparov product between
unbounded Kasparov modules.

In Section~\ref{sec:conditions} we come to manifold structures on
spectral triples. First we formulate such triples in the spin$^c$
case, with a slight strengthening of the usual conditions
\cite{ConnesRecon,Polaris}. Next we replace the spin$^c$ condition
with a new Riemannian condition, that does not require any spin$^c$
properties. We show in detail how such a Riemannian spectral triple
represents a generalisation of Kasparov's fundamental class in
$KK$-theory.

Finally, in Section~\ref{sec:main-thms} we state and prove precise
versions of (A) and~(B). We also show how Kasparov's fundamental class
provides a translation between the spin$^c$ and Riemannian Poincar\'e
duality isomorphisms.

We shall use the following notational conventions.
\vspace{-9pt}
\begin{itemize}
\setlength{\itemsep}{0pt}
\item
Throughout, $A$ and $B$ denote separable unital $C^*$-algebras. Script 
letters $\A$ and $\B$ denote dense $*$-subalgebras $\A \subset A$ and
$\B \subset B$. Often, $\A$ will come equipped a locally convex 
topology finer than that given by the $C^*$-norm of~$A$; and 
similarly for $\B$ and~$B$. 
\item
In a $C^*$-algebra $A$, or in its dense subalgebra $\A$, the notation
$a \geq 0$ means that $a$ is a positive element of the $C^*$-algebra 
$A$; we write $a > 0$ when $a$ is a nonzero positive element of~$A$.
\item
For any algebra $\A$, its opposite algebra will be denoted $\A^\op$
with elements $a^\op$, $b^\op$, etc., satisfying
$a^\op b^\op = (ba)^\op$.
\item
We deal with two kinds of ``inner products'': hermitian pairings with
values in a $*$-algebra $\A$ or~$\B$ are written with round brackets,
like $\pairR(e|f)_\A$ or $\pairL_\B(e|f)$; while \textit{scalar}
products of vectors in a Hilbert space have angle brackets, like
$\scal<\xi|\eta>$.
\item
The standard basis of~$\C^n$ will be written $\{u_1,\dots,u_n\}$. The
same notation will be used for the standard basis of ``column
vectors'' in the right $\A$-module $\A^n$.
\item
When we discuss tensor products of Fr\'echet algebras $\A$ and $\B$,
with the projective tensor product topology, the notation $\A \ox \B$
will refer to the \textit{completed} tensor product, which is often
written as $\A \hatox \B$; thus our $\A \ox \B$ is then a Fr\'echet
space. A similar convention will be used for balanced tensor products
of topological modules.
\item
On a Riemannian manifold $(M,g)$, there is a Clifford algebra bundle
with base $M$ generated by the Clifford product on the complexified
cotangent bundle. The notation $\Cliff(M)$ will denote the (unital,
assuming $M$ to be compact) $C^*$-algebra of continuous
\textit{sections} of this bundle; its isomorphism class does not 
depend on~$g$.
\item
If $T$ is a closed operator on a Hilbert (or Banach) space $\H$, its
domain is $\Dom T$. Its \textit{smooth domain} is
$$
\Domoo T := \bigcap_{k=1}^\infty \Dom T^k .
$$
If $\D$ is a selfadjoint operator, its regularised modulus is the 
operator $\Dreg := (1 + \D^2)^{1/2}$. Note that $\Dreg - |\D|$ is 
positive and bounded, and $\Domoo \Dreg = \Domoo |\D| = \Domoo \D$.
\end{itemize}

\subsubsection*{Acknowledgements}
This work has profited from discussions with Alan Carey, Nigel Higson,
Ryszard Nest, Iain Raeburn, and Fedor Sukochev.
The second author was supported by the Statens Naturvidenskabelige
Forskningsr{\aa}d, Denmark, 
and the third author was supported by the European Commission grant
MKTD--CT--2004--509794 at the University of Warsaw and the
Vicerrector\'ia de Investigaci\'on of the Universidad de Costa Rica.
All authors received support from the Australian Research Council.

\addtocontents{toc}{\vspace{-6pt}}

\section{Hermitian modules and Morita equivalence} 
\label{sec:mods-morita}

We begin with a preliminary discussion of hermitian modules and
bimodules over dense subalgebras of unital $C^*$-algebras. Much of
Rieffel's theory of strong Morita equivalence remains true, provided
one treads carefully when invoking spectral theory. The expected
properties of finitely generated projective modules all hold, but we
must spell it out.

The theory of Morita equivalence between $C^*$-algebras is fully laid
out in the monograph~\cite{RaeburnW}, whose notations we follow
mostly. What we require are the analogous notions for certain dense
subalgebras.

\begin{defn} 
\label{df:modules} 
Let $\A$ be a  dense subalgebra of a $C^*$-algebra $A$. A right
$\A$-module $\E$ is \textit{hermitian} if it carries a pairing
$\E \x \E \to \A : (e,f) \mapsto \pairR(e|f)_\A$, also called an
$\A$-valued \textit{inner product},%
\footnote{In \cite{RaeburnW}, $\E$ is called an \textit{inner product
$\A$-module}.}
which is linear in the \textit{second} entry and satisfies
$\pairR(e|f)_\A^* = \pairR(f|e)_\A$ and thus is antilinear in the
first entry; it is also positive definite,
$\pairR(e|e)_\A \geq 0$ in~$A$ with equality if and only if $e = 0$;
and it satisfies
$$
\pairR(eb|fa)_\A = b^* \pairR(e|f)_\A \,a \word{for all} a,b \in \A.
$$
We say that $\E$ is \textit{full} if
$\pairR(\E|\E)_\A := \linspan\set{\pairR(e|f)_\A : e,f \in \E}$ is
dense in~$\A$. A hermitian $\A$-module $\E$ is \textit{projective} if
it is a direct summand of a free module $\A^N$, and is
\textit{finitely generated} if $N$ is finite and there exist
$e_1,\dots,e_N \in \E$ such that every $e \in \E$ is of the form
$e = e_1a_1 +\cdots+ e_Na_N$ for some $a_1,\dots,a_N \in \A$.

If $\E_\A$ is a right hermitian $\A$-module, the conjugate vector
space $\E^\3$ is a \textit{left} $\A$-module under the left action
$a\cdot e^\3 = (ea^*)^\3$, where $e^\3$ is just $e \in \E$ regarded as
an element of~$\E^\3$. This conjugate module $\E^\3$ also carries a
\textit{left} hermitian pairing
$\pairL_\A(\cdot|\cdot) : \E^\3 \x \E^\3 \to \A$, given by
$$
\pairL_\A(e^\3|f^\3) := \pairR(e|f)_\A \.
$$
Left hermitian pairings are linear in the \textit{first} entry and 
antilinear in the second; they obey
$\pairL_\A(e|f)^* = \pairL_\A(f|e)$ and are positive definite
over~$\A$; and they satisfy
$$
\pairL_\A(ae|bf) = a \pairL_\A(e|f) b^* \word{for all} a,b \in \A.
$$
The formula $\|e\|^2 := \| \pairR(e|e)_\A \|_A$ defines a norm
on~$\E$; if $\E$ is complete in this norm and $\A = A$ is a
$C^*$-algebra, then $\E$ is a \textit{$C^*$-module} over~$A$.
\end{defn}

Given a right $C^*$-module $E$ over a $C^*$-algebra $A$, we denote the
$C^*$-algebra of all adjointable endomorphisms and its closed
subalgebra of $A$-compact endomorphisms by $\End_A(E)$ and
$\End^0_A(E)$ respectively. The latter is the norm closure of the
algebra of finite-rank operators, spanned by
$$
\Theta_{e,f} : g \mapsto e\,\pairR(f|g)_A,
$$
for $e,f,g\in E$. The same notation is used for left $C^*$-modules, 
where the finite-rank operators now act on the right:
$g\,\Theta_{e,f} := \pairL_A(g|e)\,f$.

In~\cite{RaeburnW}, following~\cite{RieffelInd}, a \textit{Morita
equivalence bimodule} $E$ between two $C^*$-algebras $B$ and~$A$ is
introduced as a bimodule $E = {}_BE_A$ which is both a full right
$C^*$-module over~$A$ and a full left $C^*$-module over~$B$, such 
that each algebra acts by adjointable operators on the module for the 
other, and both pairings satisfy a compatibility relation: for all
$a \in A$, $b \in B$ and $e,f,g \in E$,
\begin{equation}
\pairL_B(e\,a|f) = \pairL_B(e|f\,a^*),  \qquad
\pairR(b\,e|f)_A = \pairR(e|b^*\,f)_A,  \qquad
\pairL_B(e|f)\,g = e\,\pairR(f|g)_A.
\label{eq:Morita-relns} 
\end{equation}
If we wish instead to use bimodules relating dense subalgebras of 
$C^*$-algebras, the adjointability cannot be taken for granted, but 
it can be replaced by the following norm-continuity conditions
\cite[Defn.~6.10]{RieffelInd}.

\begin{defn} 
\label{df:Morita-bimod}
Let $\A$, $\B$ be dense subalgebras of $C^*$-algebras. A
\textit{pre-Morita equivalence bimodule} $\E$ between $\B$ and $\A$ is
a $\B$-$\A$-bimodule that is both a full right hermitian $\A$-module
and a full left hermitian $\B$-module, such that for all $a \in \A$,
$b \in \B$ and $e,f,g \in \E$, the following relations hold:
\begin{equation}
\pairL_\B(e\,a|e\,a) \leq \|a\|^2 \, \pairL_\B(e|e),  \qquad
\pairR(b\,e|b\,e)_\A \leq \|b\|^2 \, \pairR(e|e)_\A,  \qquad 
\pairL_\B(e|f)\,g = e\,\pairR(f|g)_\A.
\label{eq:Morita-norms} 
\end{equation}
If $\E$ is a pre-Morita equivalence bimodule between $\B$ and~$\A$,
then $\E^\3$ is a pre-Morita equivalence bimodule between $\A$
and~$\B$.
\end{defn}

For Morita equivalence bimodules between $C^*$-algebras, the two
conditions \eqref{eq:Morita-relns} and~\eqref{eq:Morita-norms} are
equivalent: see \cite[Lemma~3.7]{RaeburnW}.

Next we show that, starting from a pre-Morita equivalence bimodule
$\E$ between \textit{unital} algebras $\B$ and~$\A$, that $\E$ is
finitely generated and projective both as an $\A$-module and as a
$\B$-module. In the case of a right $C^*$-module $E$ (not necessarily
full) over a unital $C^*$-algebra $A$, it is well known ---see, for
instance, \cite[Lemma~6.5]{KasparovTech} or
\cite[Prop.~3.9]{Polaris}--- that $E$ is a finitely generated
projective $A$-module if and only if $1_E$ is an $A$-compact
endomorphism of~$E$.

In the case that $E$ is indeed a finitely generated projective right
$A$-module, we can find elements $x_1,\dots,x_m,y_1,\dots,y_m \in E$
such that
$$
1_E = \sum_{i=1}^m \Theta_{x_i,y_i}.
$$
Then there is an idempotent $q \in M_m(A)$ and an isomorphism
$E \to q A^m$ given by
\begin{equation}
e \mapsto \bigl[ \pairR(y_i|e)_A \bigr]_i \in A^m,  \qquad
1_E \mapsto q := \bigl[ \pairR(y_i|x_j)_A \bigr]_{i,j} \in M_m(A).
\label{eq:module-iso} 
\end{equation}
These formulas also apply when $A$ and $E$ are replaced by a dense 
subalgebra $\A \subset A$ and a finitely generated projective 
hermitian right $\A$-module, $\E$. In the $C^*$-case, we can go a 
little further, see~\cite[Prop.~3.9]{Polaris}, and assume that
$y_i = x_i$ for each~$i$, so that $1_E = \sum_{i=1}^m\Theta_{x_i,x_i}$
and $q = q^*$ in~$M_m(A)$. This refinement is also available for 
dense subalgebras $\A \subset A$ that allow enough functional 
calculus to take positive square roots of positive elements.

The \emph{holomorphic} functional calculus may also be invoked to 
refine the discussion. Recall that a dense subalgebra $\A$ of a
$C^*$-algebra $A$ is called a \textit{pre-$C^*$-algebra} if for all
$a \in \A$ and all functions $f$ defined and holomorphic in a
neighbourhood of the spectrum of~$a$, we have $f(a) \in \A$. If 
$\A$ is a Fr\'echet pre-$C^*$-algebra, so also is
$M_m(\A)$~\cite{Schweitzer}.

\begin{lemma} 
\label{lm:smooth-proj} 
Suppose that $\A$ is a Fr\'echet pre-$C^*$-algebra with
$C^*$-completion $A$ and that $E$ is a right $C^*$-module over~$A$.
Suppose that $1_E$ is an $A$-compact endomorphism of~$E$. Then there
exists $m \in \N$ and a projector $q \in M_m(\A)$ such that
$\E := q\A^m$ has $E$ as its $C^*$-module completion.
\end{lemma}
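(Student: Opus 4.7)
The plan is to reduce to the projection description of finitely generated projective $C^*$-modules, and then perturb the resulting projection into $M_m(\A)$ using the holomorphic functional calculus available there.

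First I would invoke the characterisation recalled just before the lemma: since $1_E$ is $A$-compact, there exist $x_1,\dots,x_m \in E$ with $1_E = \sum_i \Theta_{x_i,x_i}$, yielding a selfadjoint projection $p := [\pairR(x_i|x_j)_A] \in M_m(A)$ and an isomorphism $E \isom pA^m$ of right $C^*$-$A$-modules as in~\eqref{eq:module-iso}. The main step is to manufacture a projection $q \in M_m(\A)$ close to~$p$. By Schweitzer's theorem cited in the text, $M_m(\A)$ is itself a Fr\'echet pre-$C^*$-algebra, hence dense in $M_m(A)$ and stable under holomorphic functional calculus. I would choose a selfadjoint $p_0 \in M_m(\A)$ with $\|p_0 - p\| < 1/4$; then $\spec(p_0) \subset [-1/4,1/4] \cup [3/4,5/4]$, on which the characteristic function of the right component extends holomorphically, and applying this functional calculus produces a genuine projection $q \in M_m(\A)$ with $\|q - p\| < 1$.

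The standard equivalence of two projections at distance less than~$1$ in a unital $C^*$-algebra (Karoubi's lemma) then provides an invertible $u \in M_m(A)$ with $up = qu$, inducing an isomorphism of right $C^*$-modules $pA^m \isom qA^m$, and hence $E \isom qA^m$. I then put $\E := q\A^m$, a right $\A$-submodule of $\A^m$ with the standard pairing $\pairR(\xi|\eta)_\A = \sum_i \xi_i^*\eta_i$, which takes values in $\A$ since $q \in M_m(\A)$ and which is the restriction to $\A^m \x \A^m$ of the standard $A$-valued pairing on $A^m$. Density of $\A^m$ in $A^m$, combined with norm-continuity of left multiplication by $q$, shows that $q\A^m$ is norm-dense in $qA^m$, so the $C^*$-module completion of $\E$ is $qA^m \isom E$, as required.

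The main obstacle is the middle step of producing $q$ genuinely inside $M_m(\A)$ rather than only in $M_m(A)$. This is where the Fr\'echet pre-$C^*$-algebra hypothesis is essential: it allows the two-stage perturbation---norm approximation followed by holomorphic functional calculus---to remain inside $M_m(\A)$ at every stage, while the subsequent transfer of the isomorphism from $p$ to $q$ is a generic $C^*$-algebraic step.
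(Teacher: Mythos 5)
Your proposal is correct and follows essentially the same route as the paper: obtain $E \isom pA^m$ from the $A$-compactness of $1_E$, move the projection into $M_m(\A)$ via density plus holomorphic functional calculus (the paper delegates exactly this perturbation to the cited pages of Blackadar, where the argument you spell out is the proof), and then identify the completion of $\E := q\A^m$ with $E$. The only cosmetic difference is that you arrange $p = p^*$ at the outset and write out the spectral-gap functional-calculus step explicitly, whereas the paper works with an idempotent $\tilde q$ and quotes the homotopy result.
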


\begin{proof}
The hypothesis $1_E \in \End_A^0(E)$ implies that there is an
$A$-module isomorphism $E \isom \tilde q A^m$ for some~$m$ and some
$\tilde q \in M_m(A)$. Since $\A$ is a Fr\'echet pre-$C^*$-algebra, so
also is $M_m(\A)$; thus $\tilde q$ can be norm-continuously homotopied
to a projector $q \in M_m(\A)$, \cite[pp.~21--23]{Blackadar}. In
consequence, $q = u\tilde qu^*$ for some unitary $u \in M_m(A)$.

Thus without loss of generality, $E \isom q A^m$ where the projector 
$q$ can be taken in~$M_m(\A)$. However, by regarding $M_m(\A)$ as the 
finite-rank endomorphisms of~$\A^m$, we can find column vectors
$w_1,\dots,w_m,z_1,\ldots,z_m \in \A^m$ such that
$q = \sum_{i=1}^m \Theta_{w_1,z_1} +\cdots+ \Theta_{w_m,z_m}$.
Since, moreover,
$$
q = q^2 = \sum_{i=1}^m \Theta_{qw_i,z_i}
= \sum_{i=1}^m \Theta_{w_i,qz_i} \,,
$$
we can choose $w_i,z_i \in q \A^m$. Thus $\E := q\A^m$ is a finitely
generated projective right hermitian $\A$-module, and by
\cite[Lemma~2.16]{RaeburnW} $\E$ may be completed in norm to a
right $C^*$-module $\ovl{\E}$ over~$A$. It is now routine to show that
$E \isom \ovl{\E}$ as right $A$-modules.
\end{proof}

\begin{lemma} 
\label{lm:Morita}
Let $E = {}_BE_A$ be a Morita equivalence bimodule between the unital
$C^*$-algebras $B$ and~$A$. Suppose moreover that $\B \subset B$ and
$\A \subset A$ are unital Fr\'echet pre-$C^*$-algebras. Then there are
$n,m \in \N$ and projectors $p \in M_n(\B)$ and $q \in M_m(\A)$ such
that the left hermitian $\B$-module $\E_1 := \B^n p$ and the right
hermitian $\A$-module $\E_2 := q\A^m$ both have $C^*$-module
completion isomorphic to $E$.
\end{lemma}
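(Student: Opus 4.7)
The plan is to apply Lemma~\ref{lm:smooth-proj} twice: once viewing $E$ as a right $C^*$-module over $A$, and once viewing $E$ as a left $C^*$-module over $B$. For each application the essential input is to verify that the identity endomorphism $1_E$ is a compact endomorphism of $E$ (on the appropriate side). Once this is established, Lemma~\ref{lm:smooth-proj} delivers the desired projectors over the smooth subalgebras.

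First I would show that $1_E \in \End^0_A(E)$. The set $J := \linspan\{\pairL_B(e|f) : e,f \in E\}$ is a two-sided $*$-ideal of $B$, because left and right multiplication can be absorbed into the inner product via $b\,\pairL_B(e|f) = \pairL_B(be|f)$ and $\pairL_B(e|f)\,b = \pairL_B(e|b^*f)$. Fullness of $E$ as a left $B$-module makes $J$ dense in $B$; and since $B$ is unital, every element sufficiently close to $1_B$ is invertible, so any dense ideal must contain $1_B$ and hence equal $B$. Therefore $1_B = \sum_{i=1}^N \pairL_B(e_i|f_i)$ is an actual finite sum. Using the Morita compatibility $\pairL_B(e|f)\,g = e\,\pairR(f|g)_A$ from Definition~\ref{df:Morita-bimod}, this yields, for every $g \in E$,
$$
1_E \cdot g = 1_B \cdot g = \sum_{i=1}^N \pairL_B(e_i|f_i)\,g = \sum_{i=1}^N e_i\,\pairR(f_i|g)_A = \sum_{i=1}^N \Theta_{e_i,f_i}(g),
$$
so $1_E$ is in fact of finite rank, hence $A$-compact. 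Lemma~\ref{lm:smooth-proj} then produces $m \in \N$ and a projector $q \in M_m(\A)$ such that $\E_2 := q\A^m$ has $C^*$-completion isomorphic to $E$ as a right $A$-module.

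Next I would repeat the argument with the roles of $A$ and $B$ reversed. The span $\linspan\{\pairR(e|f)_A : e,f \in E\}$ is a dense two-sided $*$-ideal of the unital $C^*$-algebra $A$, hence equals $A$, so $1_A = \sum_{j=1}^M \pairR(x_j|y_j)_A$ is a finite sum. The same compatibility relation, read from the other side, gives $g\cdot 1_E = g \cdot 1_A = \sum_j \pairL_B(g|x_j)\,y_j$, exhibiting $1_E$ as a finite-rank endomorphism for the left $B$-module structure. The proof of Lemma~\ref{lm:smooth-proj} carries over verbatim to left modules (alternatively, replace $B$ by its opposite Fr\'echet pre-$C^*$-algebra $\B^{\op}$ and apply the right-module version), yielding $n \in \N$ and a projector $p \in M_n(\B)$ such that $\E_1 := \B^n p$ has $C^*$-completion isomorphic to $E$ as a left $B$-module.

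The only conceptually nontrivial step is the reduction from density of the ideal of inner products to the statement that $1_B$ (and $1_A$) is an \emph{actual} finite sum of inner products; everything else is a direct invocation of the previous lemma and the Morita compatibility. This step, however, is purely an application of the fact that a dense ideal of a unital $C^*$-algebra is the whole algebra, and so presents no real obstacle.
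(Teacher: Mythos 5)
Your proposal is correct and follows essentially the same route as the paper: establish that $1_E$ is a compact endomorphism for both module structures and then apply Lemma~\ref{lm:smooth-proj} to each side. The only difference is that the paper gets compactness of $1_E$ by citing the standard Morita isomorphisms $\End^0_A(E) \isom B$ and $\End^0_B(E) \isom A$, whereas you prove it directly (indeed as a finite-rank statement) via the dense-ideal argument in a unital $C^*$-algebra together with the compatibility relation $\pairL_B(e|f)\,g = e\,\pairR(f|g)_A$ --- the same argument the paper itself deploys later in Proposition~\ref{pr:pre-Morita}.
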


\begin{proof}
Since $E$ is a Morita equivalence bimodule between unital
$C^*$-algebras, there are isomorphisms $\End^0_A(E) \isom B$ and
$\End^0_B(E) \isom A$, whence the identity map $1_E$ is a compact
endomorphism for both module structures.

Using Lemma~\ref{lm:smooth-proj}, we can write $_BE \isom B^n p$ and
$E_A \isom q A^m$ for some $n,m \in \N$, $p \in M_n(\B)$ and
$q \in M_m(\A)$. We may identify $\E_1 = \B^n p$ with a 
$\B$-submodule of~$E$ and $\E_2 = q\A^m$ with an $\A$-submodule
of~$E$, under these isomorphisms. Since both module structures induce
the same norm on~$E$, given by
\begin{equation}
\|e\|^2 := \pairL_\B(e|e) = \pairR(e|e)_\A \,,
\label{eq:ambi-norm} 
\end{equation}
by \cite[Prop.~3.11]{RaeburnW}, both of these submodules are
norm-dense in~$E$; thus the completions of $\B^n p$ and $q\A^m$ are
each isomorphic to~$E$.
\end{proof}

\begin{prop} 
\label{pr:pre-Morita}
Let $\E$ be a pre-Morita equivalence bimodule between the unital
pre-$C^*$-algebras $\B$ and~$\A$. Then there are $n,m \in \N$ and
projectors $p \in M_n(\B)$ and $q \in M_m(\A)$ that define algebra
isomorphisms $\A \isom p M_n(\B) p$ and $\B \isom q M_m(\A) q$; and
module isomorphisms $\E_\A \isom q\A^m$ and ${}_\B\E \isom \B^n p$.
\end{prop}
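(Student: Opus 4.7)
My plan is to carry out the entire argument inside the pre-$C^*$-algebras themselves, by using unitality together with density of the pairings to manufacture explicit finite decompositions of $1_\B$ and~$1_\A$ and then reading off both the module and the algebra isomorphisms directly from these. The only analytic ingredient beyond algebra is the holomorphic functional calculus available inside any Fr\'echet pre-$C^*$-algebra.

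\emph{Decompositions of the identity and module isomorphisms.} Since $\pairL_\B(\E|\E)$ is dense in the unital pre-$C^*$-algebra $\B$, there exist $e_1,\dots,e_m,f_1,\dots,f_m \in \E$ such that $b := \sum_i \pairL_\B(e_i|f_i)$ satisfies $\|b - 1_\B\|_B < 1$, hence is invertible in~$\B$ by holomorphic functional calculus; setting $e'_i := b^{-1}\cdot e_i$ gives $\sum_i \pairL_\B(e'_i|f_i) = 1_\B$. Symmetrically, $\A$ being unital with $\pairR(\E|\E)_\A$ dense yields $x_1,\dots,x_n,y_1,\dots,y_n \in \E$ with $\sum_j \pairR(x_j|y_j)_\A = 1_\A$. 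The compatibility relation $\pairL_\B(e|f)\,g = e\,\pairR(f|g)_\A$ then gives, for each $g \in \E$,
\begin{equation*}
g \;=\; 1_\B\cdot g \;=\; \sum_i e'_i\,\pairR(f_i|g)_\A ,
\qquad
g \;=\; g\cdot 1_\A \;=\; \sum_j \pairL_\B(g|x_j)\,y_j .
\end{equation*}
By the construction of~\eqref{eq:module-iso}, this identifies $\E_\A$ with the image of the idempotent $\tilde q := [\pairR(f_i|e'_j)_\A]_{ij} \in M_m(\A)$, and ${}_\B\E$ with the image (acting on $\B^n$ from the right) of the idempotent $\tilde p := [\pairL_\B(y_i|x_j)]_{ij} \in M_n(\B)$. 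Because $M_m(\A)$ and $M_n(\B)$ are themselves Fr\'echet pre-$C^*$-algebras, a standard holomorphic-calculus argument conjugates $\tilde q$ and $\tilde p$ to self-adjoint projectors $q \in M_m(\A)$ and $p \in M_n(\B)$ via invertible elements of the respective matrix subalgebras, yielding $\E_\A \isom q\A^m$ and ${}_\B\E \isom \B^n p$.

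\emph{Algebra isomorphisms and main obstacle.} The isomorphism $\E_\A \isom q\A^m$ identifies $\End_\A(\E)$ with $qM_m(\A)q$. The left $\B$-action defines a homomorphism $\la\colon \B \to \End_\A(\E)$, which is injective because $\la(b) = 0$ forces $b = b\cdot 1_\B = \sum_i \pairL_\B(\la(b)e'_i|f_i) = 0$, and surjective because for any $T \in \End_\A(\E)$ the element $b_T := \sum_i \pairL_\B(Te'_i|f_i) \in \B$ satisfies $b_T\,g = \sum_i (Te'_i)\,\pairR(f_i|g)_\A = T(\sum_i e'_i\,\pairR(f_i|g)_\A) = Tg$ by $\A$-linearity of~$T$ and the decomposition above. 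Hence $\B \isom qM_m(\A)q$, and a symmetric argument gives $\A \isom pM_n(\B)p$. The principal technical delicacy throughout is that only holomorphic (rather than continuous) functional calculus is available in a general Fr\'echet pre-$C^*$-algebra; it is needed once to invert the near-unit element $b$ and its $\A$-counterpart, and again to straighten $\tilde q$ and $\tilde p$ into genuine self-adjoint projectors inside $M_m(\A)$ and $M_n(\B)$. Every remaining step is purely algebraic, driven by the compatibility relation in~\eqref{eq:Morita-norms}.
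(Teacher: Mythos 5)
Your argument is correct, and it is worth comparing with the paper's, since it reorganises the proof in a genuinely more self-contained way. The shared core is the key mechanism: fullness plus unitality plus holomorphic functional calculus upgrade \emph{density} of the pairings to exact finite decompositions $1_\B = \sum_i \pairL_\B(e'_i|f_i)$ and $1_\A = \sum_j \pairR(x_j|y_j)_\A$ (your inversion of the near-unit $b$ is precisely the paper's observation that the dense ideal $\pairL_\B(\E|\E)$ meets the invertibles of~$\B$, hence equals~$\B$), after which the frame maps of \eqref{eq:module-iso} give the module isomorphisms. Where you diverge: the paper first completes $\E$ to a $C^*$-Morita equivalence bimodule $E$ between $B$ and~$A$ and invokes Lemma~\ref{lm:Morita} (hence Lemma~\ref{lm:smooth-proj} and the compactness of~$1_E$) to obtain finite projectivity with projectors over the smooth subalgebras, and only then runs the surjectivity-of-pairings argument; you bypass the completion entirely, deriving finite generation and projectivity directly from the partitions of the identity, and you also spell out the isomorphisms $\B \isom \End_\A(\E) \isom qM_m(\A)q$ that the paper dismisses as routine. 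What each buys: your route avoids the Raeburn--Williams machinery and uses only the compatibility relation in \eqref{eq:Morita-norms} together with the existence of the $C^*$-norms, while the paper's detour yields as a byproduct the identification of the $C^*$-completions of $q\A^m$ and $\B^n p$ with~$E$, which is exploited elsewhere (e.g.\ in Lemma~\ref{lm:smooth-pairing}). Two small points to keep explicit: the fullness hypothesis must give density at least in the $C^*$-norm, which follows from continuity of the inclusion $\B \hookto B$; and straightening the idempotents $\tilde q$, $\tilde p$ into self-adjoint projectors needs $M_m(\A)$ and $M_n(\B)$ to be stable under the holomorphic functional calculus, i.e.\ the Fr\'echet hypothesis via Schweitzer's result --- the same ingredient the paper uses in Lemma~\ref{lm:smooth-proj}; note that the paper's own final $p$, $q$ are a priori only idempotents, so your extra conjugation step is, if anything, more careful.
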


\begin{proof}
The existence of a pre-Morita equivalence bimodule entails that the
pre-$C^*$-algebras $\B$ and~$\A$ have well-defined completions to
$C^*$-algebras, $B$ and~$A$ respectively. (Equivalently: there is a
unique continuous $C^*$-norm on~$\A$ which defines $A$ by completion
in this norm; and likewise for~$\B$.) The $C^*$-module completion $E$
of~$\E$ in the norm~\eqref{eq:ambi-norm} is a Morita equivalence
bimodule between the $C^*$-algebras $B$ and~$A$: this follows from the
continuity conditions \eqref{eq:Morita-norms} on the inner products
\cite[Prop.~3.12]{RaeburnW}. Thus $E$ is finitely generated and
projective both as an $A$-module and as a $B$-module, by Lemma
\ref{lm:Morita}. Moreover we can choose the projectors $p$, $q$
describing these module structures to lie over $\B$ and $\A$
respectively.

The vector space $\I = \pairL_\B(\E|\E)$ is a dense ideal in~$\B$,
since $\E$ is $\B$-full by hypothesis, and it is also contained in 
$\pairL_B(E|E)$, which is thereby a dense ideal of~$B$. (Recall that
$\B \hookto B$ is a continuous dense inclusion.) Therefore,
$\pairL_B(E|E) = B$ since the unital $C^*$-algebra $B$ can not have a
proper dense ideal. But the pre-$C^*$-subalgebra $\B$ of~$B$ also has
this ``good'' property, since $\set{b \in \B : \|1 - b\|_B < 1}$ is an
open neighbourhood of~$1$ in~$\B$ from continuity of the inclusion
$\B \hookto B$, and therefore the dense ideal $\I$ of~$\B$ cannot be
proper either.

Therefore $\pairL_\B(\E|\E) = \B$; and by the same token,
$\pairR(\E|\E)_\A = \A$. Thus we can write
$$
1 = \sum_{i=1}^m \pairL_\B(x_i|y_i) \in \B,  \qquad
1 = \sum_{k=1}^n \pairR(w_k|z_k)_\A  \in \A,
$$
for some $x_i,y_i,w_k,z_k \in \E$. Therefore, the maps 
$e \mapsto \bigl[ \pairL_\B(e|w_k) \bigr]_k$
and $e \mapsto \bigl[ \pairR(y_i|e)_A \bigr]_i$
described in \eqref{eq:module-iso} yield isomorphisms
$\E \isom \B^n p$ of left $\B$-modules and 
$\E \isom q\A^m$ of right $\A$-modules where
$$
p := \bigl[ \pairL_\B(z_l|w_k) \bigr]_{l,k} \in M_n(\B),  \qquad
q := \bigl[ \pairR(y_i|x_j)_\A \bigr]_{i,j} \in M_m(\A).
$$
The $*$-algebra isomorphisms $\A \isom p M_n(\B) p$ and
$\B \isom q M_m(\A) q$ are now routine. 
\end{proof}

\begin{lemma} 
\label{lm:matr-pairing}
Let $\A$ be a subalgebra of a unital $C^*$-algebra $A$ with $1 \in \A$
and let $\E = q\A^m$ a finitely generated projective right
$\A$-module. Then every Hermitian pairing on $\E$ is of the form
\begin{equation}
\pairR(e|f)_\A = \pairR(e|f)_r \equiv \tsum_{j,k} e^*_j r_{jk} f_k
\label{eq:matr-pairing} 
\end{equation}
where $e = (e_1,\dots,e_m)^T$ with each $e_j \in \A$ and $qe = e$;
similarly for $f$; and $r = [r_{jk}] \in qM_m(\A)q$ is positive.
\end{lemma}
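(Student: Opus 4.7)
The plan is to read off the matrix $r$ by evaluating the pairing on the canonical generators of $\E$. First I would set $v_j := qu_j \in \E$ for $j = 1,\ldots,m$, where $u_1,\ldots,u_m$ is the standard basis of $\A^m$. Any $e \in \E$ has column entries $(e_1,\ldots,e_m)^T \in \A^m$ satisfying $qe = e$, and comparing $i$-th entries gives $e = \sum_j v_j e_j$; so $\{v_1,\ldots,v_m\}$ generates $\E$ with coefficients equal to the coordinate entries of $e$.

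Next I would define $r \in M_m(\A)$ by $r_{jk} := \pairR(v_j|v_k)_\A$. Conjugate symmetry of the pairing forces $r^* = r$, and sesquilinearity applied to $e = \sum_j v_j e_j$ and $f = \sum_k v_k f_k$ immediately yields
\[
\pairR(e|f)_\A = \sum_{j,k} e_j^* \pairR(v_j|v_k)_\A f_k = \pairR(e|f)_r,
\]
which is the required formula. To place $r$ inside $qM_m(\A)q$, I would use the entrywise identity $v_j = \sum_k v_k q_{kj}$, read off from $q^2 = q$. Substituting this into the first slot and invoking the axiom $\pairR(eb|f)_\A = b^*\pairR(e|f)_\A$ from Definition~\ref{df:modules} produces $r = q^*r$; the symmetric computation in the second slot gives $r = rq$. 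In the $C^*$-module context used throughout the paper the projector $q$ may be taken self-adjoint, so $q = q^*$ and hence $r = qrq$, i.e.\ $r \in qM_m(\A)q$.

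For positivity I would use the standard criterion that a self-adjoint $R \in M_m(A)$ is positive iff the quadratic form $\xi \mapsto \sum_{j,k}\xi_j^*R_{jk}\xi_k$ is non-negative on $A^m$ (this being positivity of a self-adjoint adjointable operator on the standard $C^*$-module $A^m$ over $A$). For $\xi \in \A^m$,
\[
\sum_{j,k} \xi_j^* r_{jk} \xi_k = \pairR(\tsum_j v_j \xi_j|\tsum_k v_k \xi_k)_\A \geq 0
\]
by positivity of the pairing on $\E$. Density of $\A$ in $A$, continuity of the $C^*$-operations, and closedness of the positive cone then extend the inequality to all $\xi \in A^m$, whence $r \geq 0$ in $M_m(A)$.

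The main obstacle is the bookkeeping around the idempotent $q$: the natural computation yields $r = q^*r = rq$, so landing inside $qM_m(\A)q$ rather than $q^*M_m(\A)q$ requires the self-adjoint choice $q = q^*$. Once this standard convention is in force the remainder of the argument is purely formal.
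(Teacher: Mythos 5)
Your proof is correct and follows essentially the same route as the paper's: evaluate the pairing on the generators $x_j = qu_j$ to define $r$, expand by sesquilinearity to get \eqref{eq:matr-pairing}, use $x_j = \sum_k x_k q_{kj}$ together with the pairing axioms to land $r$ in $qM_m(\A)q$, and deduce positivity of $r$ in $M_m(A)$ from the positivity criterion for matrices over a $C^*$-algebra. Your explicit remark that $q = q^*$ is needed here is a fair point; the paper uses this silently (its computation $q_{ij}\pairR(x_j|x_k)_\A = \pairR(x_j q_{ji}|x_k)_\A$ assumes the projector is self-adjoint, as it is wherever the lemma is invoked).
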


\begin{proof}
Write $x_j = qu_j = \sum_k q_{kj} u_k \in q\A^m$, where 
$\{u_1,\dots,u_m\}$ is the standard basis of~$\A^m$. These $x_j$ 
generate $\E$ as a right $\A$-module: $e = \sum_j x_j e_j$ for any
$e \in \E$.

If $\pairR(\cdot|\cdot)_\A$ is a hermitian pairing on~$\E$, then
$0 \leq \pairR(e|e)_\A = \sum_{j,k} e_j^* \pairR(x_j|x_k)_\A e_k$.
Hence the matrix $r := [\pairR(x_j|x_k)_\A]_{jk} \in M_m(\A)$ is
positive in $M_m(A)$, by \cite[Proposition 1.20]{Polaris}. Next
\begin{align*}
(qr)_{ik} &= \tsum_j q_{ij} \pairR(x_j|x_k)_\A
= \tsum_j \pairR(x_j q_{ji}|x_k)_\A
= \tsum_{j,l} \pairR(q_{lj} u_l q_{ji}|x_k)_\A
\\
&= \tsum_{j,l} \pairR(q_{lj} q_{ji} u_l|x_k)_\A
= \tsum_l \pairR(q_{li} u_l |x_k)_\A
= \pairR(x_i|x_k)_\A = r_{ik},
\end{align*}
and similarly $rq = r$.
\end{proof}

\begin{lemma} 
\label{lm:smooth-pairing}
Let $\A$ be a dense pre-$C^*$-subalgebra of the unital $C^*$-algebra
$A$ with $1 \in \A$. If $q \in M_m(\A)$ is a projector and if 
$\pairR(\cdot|\cdot)_\A$ is an $\A$-valued hermitian pairing on
$\E = q\A^m$ making $\E$~full, then it coincides with the pairing
\eqref{eq:matr-pairing} for some positive invertible
$r \in qM_m(\A)q$.
\end{lemma}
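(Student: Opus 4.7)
The plan is to reduce the lemma to an invertibility statement inside the enveloping $C^*$-algebra $qM_m(A)q$, and then exploit fullness via the pre-$C^*$ trick already used in the proof of Proposition~\ref{pr:pre-Morita}. By Lemma~\ref{lm:matr-pairing}, the pairing is of the form~\eqref{eq:matr-pairing} for a uniquely determined positive $r \in qM_m(\A)q$, so the entire content of the lemma is that this $r$ is invertible. Since $M_m(\A)$ is pre-$C^*$ in $M_m(A)$~\cite{Schweitzer}, and the corner by $q \in M_m(\A)$ inherits the holomorphic functional calculus, $qM_m(\A)q$ is a pre-$C^*$-subalgebra of $qM_m(A)q$; by spectral permanence, it therefore suffices to find an inverse of $r$ in the $C^*$-algebra $qM_m(A)q$, which will automatically lie in $qM_m(\A)q$.

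Next, I would upgrade the density of $\pairR(\E|\E)_r$ in~$\A$ to the equality $\pairR(\E|\E)_r = \A$. The identities $a\,\pairR(e|f)_r = \pairR(ea^*|f)_r$, $\pairR(e|f)_r\,a = \pairR(e|fa)_r$ and $\pairR(e|f)_r^* = \pairR(f|e)_r$, using that $ea^*, fa \in q\A^m$ whenever $e,f \in q\A^m$, exhibit $\pairR(\E|\E)_r$ as a two-sided $*$-ideal of~$\A$. Repeating the argument in the proof of Proposition~\ref{pr:pre-Morita}, a dense $*$-ideal in the unital pre-$C^*$-algebra $\A$ must intersect the neighbourhood $\{a \in \A : \|1-a\|_A < 1\}$ of the unit, whose elements are invertible in~$A$ and hence in~$\A$ by holomorphic functional calculus; therefore this ideal cannot be proper, so $\pairR(\E|\E)_r = \A$.

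The main obstacle is converting this equality into a matrix identity producing~$r^{-1}$. Under the identification $\End_\A(q\A^m) \cong qM_m(\A)q$, the rank-one endomorphism $\Theta_{f,e}\colon g \mapsto f\,\pairR(e|g)_r$ corresponds to the matrix $fe^*r$, so a relation $\sum_i \Theta_{f^{(i)},e^{(i)}} = 1_\E$ translates into $Rr = q$ with $R := \sum_i f^{(i)}(e^{(i)})^* \in qM_m(\A)q$. The freedom provided by $\pairR(\E|\E)_r = \A$ combined with the finite projectivity of $\E = q\A^m$ should suffice to exhibit $1_\E$ as such a sum of rank-one endomorphisms of~$\E$ with the pairing~$r$. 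The resulting $R$ is a one-sided inverse of $r$ in $qM_m(A)q$; since $r$ is positive and self-adjoint, taking adjoints of $Rr = q$ upgrades this to a two-sided inverse, giving $r^{-1} = R \in qM_m(\A)q$ and completing the proof.
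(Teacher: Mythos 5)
Your first two steps are sound: the reduction to invertibility in the $C^*$-corner $qM_m(A)q$ via the holomorphic functional calculus is exactly the closing remark of the paper's proof, and upgrading fullness to the equality $\pairR(\E|\E)_r = \A$ by the dense-ideal argument of Proposition~\ref{pr:pre-Morita} is correct (the paper does not need this step, since it works with the $C^*$-completion instead). The proposal breaks down at the third step, and the break is not a technicality: the decomposition $1_\E = \sum_i \Theta^r_{f^{(i)},e^{(i)}}$ you hope to extract is \emph{equivalent} to the statement being proved. In your own dictionary $\Theta^r_{f,e}$ is the matrix $f e^* r$, and since \emph{every} element of $qM_m(\A)q$ is a finite sum $\sum_i f^{(i)}(e^{(i)})^*$ with $f^{(i)},e^{(i)} \in q\A^m$, the operators expressible as finite sums of such rank-one maps form precisely the right ideal $qM_m(\A)q\,r$. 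Hence exhibiting $1_\E = q$ in this form is literally the assertion that $r$ has a left (and, by selfadjointness, two-sided) inverse in the corner algebra --- the conclusion of the lemma. Saying that $\pairR(\E|\E)_r = \A$ together with finite projectivity ``should suffice'' is therefore circular; no argument is actually supplied at the one point where an argument is needed.

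Moreover, the equality $\pairR(\E|\E)_r = \A$ cannot by itself deliver the missing decomposition: it concerns the ideal $\linspan\set{e^* r f : e,f \in \E}$ on the $\A$-side of the Morita context, and transported to the corner it yields at most that the two-sided ideal generated by $r$ in $qM_m(A)q$ is everything, which is strictly weaker than invertibility (a positive element can generate the whole algebra as a two-sided ideal without being invertible; e.g.\ $\mathrm{diag}(1,t)$ in $M_2(C[0,1])$). This is why the paper's proof runs differently: it extends the pairing to the $C^*$-module completion $E = qA^m$, identifies $\End^0_A(E) = qM_m(A)q$, writes $\Theta^r_{e,f} = \Theta_{e,f}\,r$, and then argues that if $r$ were not invertible, these operators and their adjoints would generate a proper two-sided ideal of the endomorphism algebra, contradicting fullness of the completed module; only after that does the pre-$C^*$-algebra remark return $r^{-1}$ to $qM_m(\A)q$. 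To repair your argument you would need to build exactly this bridge from fullness of the $\A$-valued pairing to a statement about (compact) endomorphisms of $\E$ or its completion, which is where the real content of the lemma lies.
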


\begin{proof}
By \cite[Lemma 2.16]{RaeburnW}, the Hermitian form
$\pairR(\cdot|\cdot)_\A$ on $\E$ has a canonical extension to an
$A$-valued pairing on the completion $E = qA^m$, which is a full right
$A$-module. By Lemma~\ref{lm:matr-pairing}, the original inner product
and this extension are both given by the
formula~\eqref{eq:matr-pairing}, for some positive element
$r \in q M_m(\A) q$.

The compact endomorphisms of the full right $A$-module $E$ are given 
by $\End_A^0(E) = q M_m(A) q$; this algebra is generated by the
rank-one operators
$\Theta^r_{e,f} \: E \to E : g \mapsto e \pairR(f|g)_r$. In terms of 
the standard pairing on $E = q A^m$ given by 
$\pairR(e|f)_A := \sum_{j,k} e^*_j q_{jk} f_k$, it follows that
$\Theta^r_{e,f}(g) = \Theta_{e,f}(rg)$. If $r$ were not invertible in
$q M_m(A) q$, the operators $\Theta^r_{e,f} = \Theta_{e,f}\,r$ and
their adjoints would generate a proper two-sided ideal of this
$C^*$-algebra, contradicting fullness of~$E$.

We remark in passing that $r$ is invertible in $q M_m(A) q$ if and 
only if $r + (1 - q)$ is invertible in $M_m(A)$, with inverse
$r^{-1} + (1 - q)$, as is easily checked. Using the stability under
the holomorphic functional calculus of $M_m(\A)$, we find that
$r^{-1}$ also lies in $qM_m(\A)q$.
\end{proof}


Later on, we will consider Hilbert spaces arising as completions of 
finitely generated projective modules.

\begin{defn} 
\label{df:ltwo-space}
Let $\A$ be a unital pre-$C^*$-algebra, $\E$ a right hermitian
$\A$-module and $\psi\: \A \to \C$ a faithful bounded positive linear
functional. Let $L^2(\E,\psi)$ denote the Hilbert space completion
of~$\E$ with respect to the scalar product
$$
\scal<e|f> := \psi\bigl( \pairR(e|f)_\A \bigr).
$$
\end{defn}

\begin{prop} 
\label{pr:lin-bdd}
Let $\H_\infty \subset \H$ be a dense subspace of the Hilbert space
$\H$, and suppose that $\H_\infty$ is a finite projective right hermitian 
$\A$-module, $\H_\infty \isom q\A^m$. Suppose moreover that
$\psi\: \A \to \C$ is a faithful bounded positive linear functional
such that $\H = L^2(\H_\infty,\psi)$.

Let $T\: \H_\infty \to \H_\infty$ be right $\A$-linear. Then $T$
extends to a bounded operator on~$\H$.
\end{prop}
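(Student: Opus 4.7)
The plan is to reduce boundedness of~$T$ to boundedness of finitely many rank-one operators, thereby avoiding any direct analysis of the weight matrix $r \in qM_m(\A)q$ representing the pairing on~$\H_\infty$ via Lemma~\ref{lm:matr-pairing}. Since $\H_\infty \isom q\A^m$ is finitely generated and projective, the formulas around~\eqref{eq:module-iso} provide vectors $x_i, y_i \in \H_\infty$, for $i = 1, \dots, m$, such that $1_{\H_\infty} = \sum_{i=1}^m \Theta_{x_i, y_i}$, where $\Theta_{e, f}(g) := e\,\pairR(f|g)_\A$. Right $\A$-linearity of $T$ immediately gives $T \circ \Theta_{x_i, y_i} = \Theta_{T x_i, y_i}$, so $T = \sum_{i=1}^m \Theta_{T x_i, y_i}$. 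It therefore suffices to show that each $\Theta_{a, b}$ with $a, b \in \H_\infty$ extends to a bounded operator on~$\H$.

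For the rank-one estimate I would compute
$$
\|\Theta_{a,b} e\|_\H^2
= \psi\bigl( \pairR(b|e)_\A^*\, \pairR(a|a)_\A\, \pairR(b|e)_\A \bigr),
$$
and apply two successive inequalities inside the $C^*$-algebra~$A$. First, the elementary bound $c^* d\, c \leq \|d\|\, c^* c$ for positive $d$ yields $\pairR(b|e)_\A^* \pairR(a|a)_\A \pairR(b|e)_\A \leq \|\pairR(a|a)_\A\|\, \pairR(b|e)_\A^* \pairR(b|e)_\A$. Second, the Cauchy--Schwarz inequality for $\A$-valued hermitian pairings gives $\pairR(b|e)_\A^* \pairR(b|e)_\A \leq \|\pairR(b|b)_\A\|\, \pairR(e|e)_\A$. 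Applying the positive functional~$\psi$ and extracting a square root delivers
$$
\|\Theta_{a,b} e\|_\H \leq \|\pairR(a|a)_\A\|^{1/2}\, \|\pairR(b|b)_\A\|^{1/2}\, \|e\|_\H.
$$
Hence $\Theta_{a, b}$ extends by continuity to a bounded operator on~$\H$, and summing the $m$ contributions from the $\Theta_{Tx_i, y_i}$ bounds the norm of the extension of~$T$.

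The one step needing verification is the Cauchy--Schwarz inequality, which is classical for Hilbert $C^*$-modules but is used here for the pre-$C^*$ pairing $\pairR(\cdot|\cdot)_\A$. Since positivity in Definition~\ref{df:modules} is measured in the ambient $C^*$-algebra~$A$, the usual argument---expanding $\pairR(x - y\,c \mathbin{|} x - y\,c)_\A \geq 0$ for a suitable $c \in A$---carries over verbatim, the resulting inequality living in~$A$ while involving only elements already in~$\A$. Pleasingly, this approach requires neither fullness of~$\H_\infty$ nor invertibility of the weight matrix~$r$, which is fortunate since the proposition postulates neither.
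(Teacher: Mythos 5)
Your proof is correct, and at bottom it is the paper's own argument in slightly different packaging: the paper likewise expands $T$ through the finite generating system (its computation contains the identity $T\xi = \sum_r \Theta_{T\xi_r,\xi_r}\,\xi$) and then estimates using positivity of $\psi$ together with a $C^*$-module norm bound, citing $\|\Theta_{\xi,\eta}\| = \|\pairR(\eta|\xi)_\A\|$, whereas you split $T$ into finitely many rank-one pieces and bound each by the module Cauchy--Schwarz inequality. One point needs tightening, because your closing remark overstates what the argument avoids. The decomposition $1_{\H_\infty} = \sum_i \Theta_{x_i,y_i}$ must be taken with respect to the same pairing that defines the scalar product of $\H = L^2(\H_\infty,\psi)$; and for a pairing of the form \eqref{eq:matr-pairing} the identity is a finite-rank endomorphism precisely when the weight $r$ is invertible in $qM_m(A)q$ --- in the picture $\H_\infty \isom q\A^m$ the requirement is $\bigl(\tsum_i x_i y_i^*\bigr)\,r = q$. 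What makes the step legitimate, and it is exactly what the paper's proof does, is to work with the standard pairing induced by the isomorphism $\H_\infty \isom q\A^m$, for which $1_{\H_\infty} = \sum_j \Theta_{x_j,x_j}$ with $x_j = qu_j$ holds automatically; so your argument is valid in precisely the same regime as the paper's, namely with the $L^2$-structure understood relative to that pairing, and the claim that invertibility of $r$ plays no role should be deleted or qualified accordingly. Finally, a small repair in the Cauchy--Schwarz verification: the auxiliary element must be chosen in $\A$, not merely in $A$ (it can be, since one takes a suitably normalised value of the pairing itself), so that the expanded vector remains in $\H_\infty$, where the positivity axiom is available.
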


\begin{rmk} 
Of course Proposition~\ref{pr:lin-bdd} applies equally well to left
modules when the operator $T$ is linear over the left action of~$\A$.
\end{rmk}

\begin{proof}
The projective right $\A$-module $\H_\infty \isom q\A^m$ has a finite
set of generators $\xi_1,\dots,\xi_m$ such that any
$\xi \in \H_\infty$ may be written as
$$
\xi = \sum_{j=1}^m \xi_j a_j = \sum_{j,r=1}^m \xi_j q_{jr} a_r,
\word{for some} a_1,\dots,a_m \in \A.
$$
The generators may thus be taken to satisfy the relations
$\xi_r = \sum_{j=1}^m \xi_j q_{jr}$, for $r = 1,\dots,m$. If
$\eta = \sum_{j=1}^m \xi_j b_j$ also, the $\A$-valued inner product of
$\eta,\xi \in \H_\infty$ coming from this isomorphism is well defined
by
$$
\pairR(\eta|\xi)_\A := \sum_{j,r=1}^m b_j^* q_{jr} a_r.
$$

The right $\A$-linearity of $T$ gives
$T\xi = \sum_{j=1}^m (T\xi_j) a_j$ and 
$T\xi_r = \sum_{j=1}^m (T\xi_j) q_{jr}$. Thus
$$
\pairR(T\xi_j|T\xi_k)_\A 
= \sum_{r,s} q_{jr}\, \pairR(T\xi_r|T\xi_s)_\A \,q_{sk}.
$$
The inner product $\pairR(T\xi|T\xi)_\A$ may be expanded as follows:
\begin{align*}
\pairR(T\xi|T\xi)_\A
&= \sum_{j,k} a_j^* \pairR(T\xi_j|T\xi_k)_\A \, a_k
= \sum_{j,k,r,s} a_j^* q_{jr}\, \pairR(T\xi_r|T\xi_s)_\A \,q_{sk} a_k
\\
&= \sum_{r,s} \pairR(\xi|\xi_r)_\A \pairR(T\xi_r|T\xi_s)_\A
\pairR(\xi_s|\xi)_\A
= \sum_{r,s} \bigl( T\xi_r \,\pairR(\xi_r|\xi)_\A \bigm|
T\xi_s \,\pairR(\xi_s|\xi)_\A \bigr)_\A
\\
&= \sum_{r,s} \bigl( \Theta_{T\xi_r,\xi_r} \xi \bigm|
\Theta_{T\xi_s,\xi_s} \xi \bigr)_\A
= \sum_{r,s}  \bigl( \xi \bigm| 
\Theta_{\xi_r \pairR(T\xi_r|T\xi_s)_\A, \xi_s} \xi \bigr)_\A
\\
&\leq \sum_{r,s}
\bigl\| \Theta_{\xi_r \pairR(T\xi_r|T\xi_s)_\A, \xi_s} \bigr\|
\,\pairR(\xi|\xi)_\A \,.
\end{align*}
In the last line here the norm is the endomorphism norm (of the
$C^*$-completion), which satisfies
$\|\Theta_{\xi,\eta}\| = \| \pairR(\eta|\xi)_\A \|$ for
$\xi,\eta \in \H_\infty$ by \cite[Lemma~2.30]{RaeburnW}.

Now we can estimate the operator norm of $T$; for
$\xi \in \H_\infty$ we get the bound
\begin{align*}
\scal<T\xi|T\xi> = \psi\bigl( \pairR(T\xi|T\xi)_\A \bigr)
&\leq \sum_{r,s=1}^m \bigl\| 
\bigr( \xi_s \bigm| \xi_r \pairR(T\xi_r|T\xi_s)_\A \bigr)_\A \bigr\|
\, \psi\bigl( \pairR(\xi|\xi)_\A \bigr)
\\
&= \sum_{r,s=1}^m \bigl\| 
\bigr( \xi_s \bigm| \xi_r \pairR(T\xi_r|T\xi_s)_\A \bigr)_\A \bigr\|
\, \scal<\xi|\xi> \,.
\end{align*}
Therefore, $T$ extends by continuity to a bounded operator on~$\H$, 
with
\begin{equation*}
\|T\|^2 \leq \sum_{r,s=1}^m
\bigl\| \pairR(\xi_s|\xi_r)_\A \pairR(T\xi_r|T\xi_s)_\A \bigr\|.
\tag*{\qedhere}
\end{equation*}
\end{proof}

\addtocontents{toc}{\vspace{-6pt}}

\section{Bimodule connections and Kasparov modules} 
\label{sec:bimod-connections}

In this section we shall use the Kasparov product to produce new
spectral triples from old ones, in the presence of a Morita
equivalence bimodule. This is essentially the noncommutative
formulation of twisting an elliptic operator by a vector bundle.
Spectral triples yield unbounded Kasparov modules, so we use the work
of Kucerovsky~\cite{Kucerovsky} to implement this product. The main
technical requirement is the construction of suitable bimodule
connections to set up the unbounded Kasparov product.

\subsection{Connections on bimodules and the first order condition} 
\label{ssc:Dirac-type}

In this subsection the $*$-algebras $\A$ and $\B$ will always be
unital \textit{Fr\'echet pre-$C^*$-algebras} with a unique continuous
$C^*$-norm; and $\E$ will denote a $\B$-$\A$-bimodule with a complete
locally convex topology for which the module operations are
continuous. Moreover, we shall assume that $\E$ carries both a
left-linear inner product with values in $\B$ and a right-linear inner
product with values in $\A$, such that the right action of $\A$ on
$\E$ is $\B$-linear and adjointable with respect to
$\pairL_\B(\.|\.)$; and vice versa.

We do not, at this stage, assume any fullness conditions, so that
$\E$ need not be a pre-Morita equivalence bimodule between $\B$ 
and~$\A$, although this is of course the main example.

Let $\End_\A(\E)$ be the $*$-algebra of $\A$-linear adjointable maps
from $\E$ to~$\E$. Such maps need not, \textit{a priori}, be bounded
with respect to the $C^*$-module norm on~$\E$ coming from
$\pairR(\.|\.)_\A$; although we shall have occasion later, in 
the finitely generated and projective case, to obtain
such bounds by applying Proposition~\ref{pr:lin-bdd}. Similarly, let
$\End_\B(\E)$ be the $*$-algebra of $\B$-linear adjointable maps 
on~$\E$. Our assumptions give us the inclusions:
$$
\A \subseteq \End_\B(\E) \mbox{ acting on the right}, \qquad
\B \subseteq \End_\A(\E) \mbox{ acting on the left}.
$$
If $a \in \A$, we write $a^\op$ for the corresponding right
multiplication operator in $\End_\B(\E)$, since the right action of
$\A$ on~$\E$ gives a left action of the opposite algebra $\A^\op$.

We need to deal with tensor products over the Fr\'echet algebras $\A$ 
and~$\B$. We recall that the completed projective tensor product
(over~$\C$) of $\A$ with itself, which we write simply as $\A \ox \A$
rather than $\A \hatox \A$, is a Fr\'echet space. Since the
multiplication of a Fr\'echet algebra is jointly continuous, the 
kernel $\Om^1\A$ of the corresponding linear map
$m\: \A \ox \A \to \A$ is closed and thus is a Fr\'echet space, as
well as being an $\A$-$\A$-bimodule for which the left and right
actions of $\A$ on $\Om^1\A$ are continuous: see Section~II.4
of~\cite{Helemskii}.

Likewise, we may define \textit{balanced} tensor products such as
$\E \ox_\A \Om^1\A$, by quotienting the (completed) projective tensor
product $\E \ox \Om^1\A$ by the closure of the linear span of the
tensors $e\,a \ox \om - e \ox a\,\om$, where $e \in \E$, $a \in \A$
and $\om \in \Om^1\A$. Thus $\E \ox_\A \Om^1\A$ is again a complete
locally convex space, and moreover is a topological
$\B$-$\A$-bimodule, by~\cite[Prop.~5.15]{Helemskii}.

\vspace{6pt}

Assume further that $\E$ is projective both  as a right $\A$-module 
and as a left $\B$-module. This assumption enables us to choose
connections \cite{Book,Polaris}:
$$
\nb_\B : \E \to \Om^1\B \ox_\B \E,  \qquad\qquad
\nb_\A : \E \to \E \ox_\A \Om^1\A,
$$
which are $\C$-linear maps such that, for all $a \in \A$, $b \in \B$,
$e \in \E$,
$$
\nb_\B(be) = db \ox e + b\,\nb_\B(e),  \qquad
\nb_\A(ea) = e \ox da + \nb_\A(e)\,a.
$$
The graded differential algebra $\Om^\8\A$ is densely generated by
elements $a \in \A$, $da \in \Om^1\A$ subject to the preexisting
algebra relations of $\A$, the derivation rule
$d(ab) = a\,db + da\,b$, and the relations
$$
d(a_0 \,da_1\cdots da_k) = da_0\,da_1\cdots da_k, \qquad
d(da_1\,da_2\cdots da_k) = 0.
$$
Note that $\Om^0\A = \A$, so $\A$ is to be regarded as a subalgebra of
$\Om^\8\A$. Also, since $\A$ is a $*$-algebra, $\Om^\8\A$ becomes a
$*$-algebra by adding the rule $(da)^* = - d(a^*)$.

The connection $\nb_\A$ extends to an operator on the module
$\E \ox_\A \Om^\8\A$, using a graded Leibniz rule, and similarly
for~$\nb_\B$ on the module $\Om^\8\B \ox_\B \E$.

While $\nb_\A$ does not intertwine the left $\B$-module structures of
$\E$ and $\Om^1\B \ox_\B \E$ (and similarly for $\nb_\B$), the
following linearity relations do hold.

\begin{lemma} 
\label{lm:basic-obs}
With the notation as above, all $a \in \A$ and $b \in \B$ satisfy
$$ 
[\nb_\B, b] \in \Hom_\A(\E, \Om^1\B \ox_\B \E),  \qquad
[\nb_\A, a^\op] \in \Hom_\B(\E, \E \ox_\A \Om^1\A).
$$
\end{lemma}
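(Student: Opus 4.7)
The plan is to compute each commutator explicitly using the Leibniz rules that define the connections, and then observe that the resulting closed form is manifestly linear over the ``other'' algebra action.

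First, I would apply the left Leibniz rule for $\nb_\B$ to the element $be \in \E$:
\[
\nb_\B(be) = db \ox e + b\,\nb_\B(e),
\]
which yields $[\nb_\B, b](e) = db \ox e$ as an element of $\Om^1\B \ox_\B \E$. Dually, the right Leibniz rule for $\nb_\A$ applied to $ea = a^\op e$ gives
\[
\nb_\A(ea) = e \ox da + \nb_\A(e)\,a,
\]
so $[\nb_\A, a^\op](e) = e \ox da$ in $\E \ox_\A \Om^1\A$.

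Once these two identities are in hand, the required linearity is immediate. To check right $\A$-linearity of $[\nb_\B, b]$, I would reapply the left Leibniz rule to the element $b(ea) = (be)a$: this gives $\nb_\B(b(ea)) = db \ox (ea) + b\,\nb_\B(ea)$, hence
\[
[\nb_\B, b](ea) = db \ox (ea) = (db \ox e)\cdot a = [\nb_\B, b](e)\cdot a,
\]
where in the middle equality I use that the right $\A$-action on $\Om^1\B \ox_\B \E$ is carried entirely on the $\E$-factor. Symmetrically, applying the right Leibniz rule to $(be)a$ shows
\[
[\nb_\A, a^\op](be) = (be) \ox da = b\cdot(e \ox da) = b\cdot[\nb_\A, a^\op](e),
\]
establishing left $\B$-linearity of $[\nb_\A, a^\op]$.

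There is no serious obstacle here: the content is entirely bookkeeping once the closed forms $db \ox e$ and $e \ox da$ are extracted. The only point that deserves a brief mention is that these expressions make sense as elements of the completed balanced tensor products $\Om^1\B \ox_\B \E$ and $\E \ox_\A \Om^1\A$, which was guaranteed by the standing assumption that $\A$ and $\B$ are Fr\'echet pre-$C^*$-algebras and $\E$ is a topological bimodule, so that the balanced tensor products inherit continuous bimodule structures as recalled from \cite{Helemskii} just above the lemma.
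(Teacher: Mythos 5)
Your proof is correct and follows essentially the same route as the paper's: both extract the closed forms $[\nb_\B,b](e) = db \ox e$ and $[\nb_\A,a^\op](e) = e \ox da$ from the Leibniz rules and then note that the relevant module action on the balanced tensor product is carried on the $\E$-factor, so the linearity is immediate. Your explicit re-application of the Leibniz rule to $(be)a$ just spells out what the paper states in one line.
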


\begin{proof}
This follows immediately from the definition of connection, since if
$e \in \E$ then
$$ 
\nb_\B(be) - b\,\nb_\B(e) = db \ox e.
$$
Since $\Om^1\B \ox_\B \E$ and $\E$ carry the same action of $\A$ on
the right of $\E$, the first statement is proved. 
If $a \in \A$, an identical argument shows that $[\nb_\A, a^\op]$
respects the left actions of~$\B$.
\end{proof}

Suppose now that we are given a pair of $*$-homomorphisms:
$$ 
c_\B : \Om^\8\B \to \End_\A(\E),  \qquad 
c_\A : (\Om^\8\A)^\op \to \End_\B(\E).
$$
We suppose, for compatibility with the tensor products, that
$c_\B\bigr|_{\Om^0\B}$ agrees with the left action of $\B$ on~$\E$, and
similarly for $c_\A$. With this assumption, we also obtain adjointable
module maps
\begin{equation}
\ga_\B : \Om^\8\B \ox_\B \E \to \E,  \qquad
\ga_\A : \E \ox_\A \Om^\8\A \to \E,
\label{eq:module-maps} 
\end{equation}
given by $\ga_\B(\bt \ox e) := c_\B(\bt)(e)$ and 
$\ga_\A(e \ox \al) := c_\A(\al^\op)(e)$.

\begin{eg} 
\label{eg:cliff-bundle}
The main classical example we have in mind is a Clifford module. Let
$M$ be a closed $C^\infty$ manifold with Riemannian metric~$g$, and
$E \to M$ a smooth complex vector bundle with the additional property
that each fibre $E_x$, $x \in M$, is a module for $\Cliff_x$, the
complex Clifford algebra for $T^*_xM$ with inner product given by
$g_x^{-1}$. Denoting the exterior derivative by $\d$ and the Clifford
action at $x \in M$ by~$c_x$, we can define an algebra homomorphism
$$
c_M : \Om^\8 \Coo(M) \to \End_{\Coo(M)}\bigl( \Coo(M,E) \bigr)
$$ 
by
$$
c_M(f_0\,df_1\,\cdots df_k) s
: x \longmapsto f_0(x) c_x(\d f_1)\dots c_x(\d f_k) s(x), \qquad 
f_j \in \Coo(M),\ s \in \Coo(M,E).
$$
It is straightforward to check that $\A = \B = \Coo(M)$ is a Fr\'echet
pre-$C^*$-algebra and that $\E = \Coo(M,E)$ is a $\Z_2$-graded
hermitian bimodule over this algebra with continuous actions; and that
the restriction of the left Clifford action to functions is just the
multiplication of sections by smooth functions.
\end{eg}

Returning to the general case, we note the following preliminary
result.

\begin{lemma} 
\label{lm:alg-dee} 
With the notation as above, define two linear operators on~$\E$ by 
$\D_\B := \ga_\B \circ \nb_\B$ and $\D_\A := \ga_\A \circ \nb_\A$.
Then $[\D_\B, b] \in \End_\A(\E)$ with $[\D_\B, b]^* = - [\D_\B, b^*]$
for all $b \in \B$ and $[\D_\A, a^\op] \in \End_\B(\E)$ with
$[\D_\A, a^\op]^* = - [\D_\A, (a^\op)^*]$ for all $a \in \A$.
\end{lemma}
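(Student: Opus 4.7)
The claim reduces to a direct Leibniz-rule computation together with the $*$-homomorphism property of $c_\B$ and $c_\A$, so the plan is largely a matter of bookkeeping with the assumed compatibilities.

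First I would handle $\D_\B$. Starting from the defining Leibniz rule $\nb_\B(be) = db \ox e + b\,\nb_\B(e)$ and applying $\ga_\B$, I get
\[
\D_\B(be) = c_\B(db)(e) + c_\B(b)\,\D_\B(e),
\]
where the second term is $b\,\D_\B(e)$ because the hypothesis that $c_\B|_{\Om^0\B}$ coincides with the left $\B$-action makes $c_\B(b)$ equal to left multiplication by~$b$. Rearranging gives $[\D_\B,b] = c_\B(db)$ as operators on $\E$, and since $c_\B$ takes values in $\End_\A(\E)$, this already proves the $\A$-linearity part. For the adjoint relation I would just use that $c_\B$ is a $*$-homomorphism on~$\Om^\8\B$, together with the relation $(db)^* = -d(b^*)$ built into the $*$-structure on the universal differential algebra: then
\[
[\D_\B,b]^* = c_\B(db)^* = c_\B((db)^*) = -c_\B(d(b^*)) = -[\D_\B, b^*].
\]

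The argument for $\D_\A$ is parallel but requires a careful look at the opposite-algebra conventions. From $\nb_\A(ea) = e \ox da + \nb_\A(e)\,a$ and the definition of $\ga_\A$ via $c_\A$, the first summand yields $c_\A((da)^\op)(e)$, while the second summand equals $(\D_\A e)\,a$ once one unpacks that, for $\nb_\A(e) = \sum e_i \ox \om_i$, $\ga_\A(\sum e_i \ox \om_i a) = \sum c_\A((\om_i a)^\op)(e_i) = \sum c_\A(a^\op)\,c_\A(\om_i^\op)(e_i)$, and then invokes the compatibility of $c_\A|_{\Om^0\A}$ with right multiplication by~$\A$. This gives $[\D_\A, a^\op] = c_\A((da)^\op) \in \End_\B(\E)$. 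The adjoint identity then follows as before, using that $((da)^\op)^* = (-d(a^*))^\op$ and the $*$-homomorphism property of $c_\A$.

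The only step with any subtlety is the second one in each case: one must verify that passing the right $\A$-action through $\ga_\A$ really produces right multiplication on $\E$, and this is the single place where the unstated compatibility hypothesis $c_\A|_{\Om^0\A} = (\text{right action})^\op$ is essential. Once that is made explicit, the whole lemma is a one-line Leibniz computation plus the $*$-homomorphism property, so I expect no genuine obstacle.
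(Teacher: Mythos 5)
Your proof is correct and follows essentially the same route as the paper: both arguments boil down to identifying $[\D_\B,b]=c_\B(db)$ and $[\D_\A,a^\op]=c_\A((da)^\op)$ via the Leibniz rule, and then reading off $\A$- (resp.\ $\B$-) linearity, adjointability and the sign in the adjoint formula from the fact that $c_\B$, $c_\A$ are $*$-homomorphisms and $(da)^*=-d(a^*)$. The one caveat you raise is moot: the compatibility of $c_\A$ on $\Om^0\A$ with the right $\A$-action is not an unstated hypothesis but is exactly the paper's clause ``and similarly for $c_\A$'' in the paragraph introducing $c_\B$ and $c_\A$.
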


\begin{proof}
The right $\A$-linearity of $[\D_\B, b]$ is a straightforward check:
\begin{align*}
[\D_\B, b](ea) - ([\D_\B,b]e)a 
&= \ga_\B \nb_\B(bea) - b\,\ga_\B \nb_\B(ea) - \ga_\B \nb_\B(be)a
+ b(\ga_\B \nb_\B(e)) a
\\
&= \ga_\B \bigl( \nb_\B(bea) - b\,\nb_\B(ea) \bigr)
- \ga_\B \bigl( \nb_\B(be) + b\,\nb_\B(e) \bigr) a
\\
&= \ga_\B(db \ox ea) - \ga_\B(db \ox e) a
\\
&= \ga_\B ((db \ox e)a) - \ga_\B(db \ox e)a = 0,
\end{align*}
since $\ga_\B$ is a right $\A$-module map.

For the adjointability, the previous calculation, with $a = 1$, gives
$$
[\D_\B, b](e) = \ga_\B(db \ox e) = c_\B(db)(e),
$$
and by definition, $c_\B(db) \in \End_\A(\E)$ is assumed adjointable.
Thus also,
$[\D_\B, b]^* = c_\B(db)^* = - c_\B(d(b^*)) = - [\D_\B, b^*]$ since
$c_\B$ is a $*$-homomorphism. The analysis of $[\D_\A, a^\op]$ follows
the same pattern.
\end{proof}

\begin{eg} 
\label{eg:Dirac-bundle}
In the context of Example~\ref{eg:cliff-bundle},
Lemma~\ref{lm:alg-dee} defines a Dirac-type operator $\D$ on the
smooth sections $C^\infty(M,E)$ of the Clifford bundle; that is to
say, a first-order differential operator satisfying
$[\D,f] = c_M(\d f)$ for all $f \in \Coo(M)$. Such Dirac-type
operators typically are of the form $\D = c_M \circ \nb^E$ where
$\nb^E \: \Coo(M,E) \to \Coo(M, T^*M \ox E)$ is a Clifford connection:
see \cite[Sect.~3.3]{BerlineGV}.

We have used the left action of $\Coo(M)$ on $\Coo(M,E)$ to define the
Dirac operator. In this example we could also use the right action to
define another Dirac operator. The relationship between these two
possible definitions plays a prominent role in~\cite{FroehlichGR},
where supersymmetry is used to discuss how one can model additional
geometric structures (oriented, spin, complex, K\"ahler,\dots).
\end{eg}

\begin{rmk} 
Lemma~\ref{lm:alg-dee} gives an algebraic version of the first order
condition for spectral triples (see below) in a very general setting.
We can make some interesting deductions about the operators
$\ga \circ \nb$ that can be defined in this way. The lemma gives the
commutator equations
$$ 
[[\D_\A, a^\op], b] = 0,  \qquad  [[\D_\B, b], a^\op] = 0,
$$
which are respectively equivalent to
$$
[[\D_\A, b], a^\op] = 0,  \qquad  [[\D_\B, a^\op], b] = 0.
$$
This observation allows us to say a little more about commutators with the connection itself as well,
and strengthens the linearity properties of commutators with $\D_\A$ and $\D_\B$.
\end{rmk}

\begin{lemma} 
\label{lm:extra-linear}
For all $a \in \A$, the map
$[\nb_\B, a^\op] : \E \to \Om^1\B \ox_\B \E$ is left $\B$-linear.
Similarly, $[\nb_\A, b] : \E \to \E \ox_\A \Om^1\A$ is right
$\A$-linear for all $b \in \B$.
\end{lemma}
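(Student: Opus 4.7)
The plan is to verify both statements by a direct Leibniz-rule computation, using the two features of the setup that have not yet been exploited in Lemma~\ref{lm:basic-obs}: namely, that $\E$ is an $\A$-$\B$-bimodule so the left $\B$-action and right $\A$-action commute on $\E$, and that these actions extend to the balanced tensor products in the natural way (the right $\A$-action on $\Om^1\B \ox_\B \E$ lives on the $\E$ factor, and the left $\B$-action on $\E \ox_\A \Om^1\A$ likewise lives on the $\E$ factor).

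For the first statement, I would fix $a \in \A$, $b \in \B$, $e \in \E$ and expand
$$
[\nb_\B, a^\op](be) = \nb_\B(bea) - \nb_\B(be)\,a.
$$
Because the two actions commute, $bea$ equals $b \cdot (ea)$, and the left Leibniz rule for $\nb_\B$ gives $\nb_\B(bea) = db \ox ea + b\,\nb_\B(ea)$. Applying the same Leibniz rule to $\nb_\B(be)$ and then multiplying on the right by~$a$ gives $\nb_\B(be)\,a = (db \ox e)\,a + b\,\nb_\B(e)\,a$. The definition of the right $\A$-action on $\Om^1\B \ox_\B \E$ ensures $(db \ox e)\,a = db \ox ea$, so the two $db \ox ea$ terms cancel and what remains is $b\bigl(\nb_\B(ea) - \nb_\B(e)\,a\bigr) = b\,[\nb_\B, a^\op](e)$, as required.

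The second statement is the exact mirror image: one expands $[\nb_\A, b](ea)$ using the right Leibniz rule $\nb_\A(xa) = x \ox da + \nb_\A(x)\,a$ applied to both $\nb_\A(bea)$ (with $bea = (be)\cdot a$) and $\nb_\A(be)$, and uses that the left $\B$-action on $\E \ox_\A \Om^1\A$ acts through the $\E$ factor so that $b\,(e \ox da) = (be) \ox da$. The inhomogeneous terms $be \ox da$ cancel and what remains is $\bigl([\nb_\A, b]e\bigr)\,a$.

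There is no real obstacle here; the content of the lemma is simply that the inhomogeneous term in the left Leibniz rule for~$\nb_\B$ lands in a subspace on which the right $\A$-action passes harmlessly through the tensor product, so the commutator $[\nb_\B, a^\op]$ inherits full left $\B$-linearity even though $\nb_\B$ itself is not a bimodule map. This is the natural companion to Lemma~\ref{lm:basic-obs}, which recorded that the failure of $\nb_\B$ to be left $\B$-linear is a right $\A$-linear defect; the present lemma records the dual fact that $\nb_\B$ \emph{is} compatible with the right $\A$-action up to a left $\B$-linear term.
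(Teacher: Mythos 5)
Your proof is correct and takes essentially the same route as the paper: both evaluate the same four-term expression $\nb_\B(bea) - \nb_\B(be)\,a - b\,\nb_\B(ea) + b\,\nb_\B(e)\,a$, the paper by regrouping it as the vanishing double commutator $[\,[\nb_\B,b],a^\op]\,e$ and citing the right $\A$-linearity of $[\nb_\B,b]$ from Lemma~\ref{lm:basic-obs}, while you expand the Leibniz rule inline, which simply reproves that lemma on the spot. Only a cosmetic slip: $\E$ is a $\B$-$\A$-bimodule (left $\B$, right $\A$), as your computation in fact uses.
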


\begin{proof} 
For $a \in \A$, $b \in \B$, $e \in \E$ we find that
\begin{align*}
[\,[\nb_\B, a^\op], b]\,e 
&= [\nb_\B, a^\op]\,be  -b\,[\nb_\B, a^\op]\,e
= \nb_\B(bea) - \nb_\B(be)a - b\,\nb_\B(ea) + b\,\nb_\B(e)a
\\
&= [\nb_\B, b](ea) - ([\nb_\B, b]e)a = [\,[\nb_\B, b], a^\op]\,e = 0,
\end{align*}
since we know from Lemma~\ref{lm:basic-obs} that $[\nb_\B,b]$ is right
$\A$-linear.
\end{proof}

\begin{corl} 
\label{cr:more-commutes}
For any scalars $\la_1,\la_2$, and all $a \in \A$,
$[\la_1 \D_\A + \la_2 \D_\B, a^\op]\in \End_\B(\E)$; and if $b \in B$,
then $[\la_1 \D_\A + \la_2 \D_\B, b]\in \End_\A(\E)$.
\end{corl}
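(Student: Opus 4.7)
My plan is to expand the commutator by bilinearity and reduce the statement to four separate claims, two of which are recorded in Lemma~\ref{lm:alg-dee} and two of which follow by a Jacobi-type identity combined with that lemma. Concretely, bilinearity of the commutator gives
\[
[\la_1 \D_\A + \la_2 \D_\B,\, a^\op] = \la_1 [\D_\A, a^\op] + \la_2 [\D_\B, a^\op],
\]
and similarly with $b$ in place of $a^\op$. Since $\End_\B(\E)$ and $\End_\A(\E)$ are vector spaces, it suffices to verify the four assertions $[\D_\A, a^\op], [\D_\B, a^\op] \in \End_\B(\E)$ and $[\D_\A, b], [\D_\B, b] \in \End_\A(\E)$.

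Two of these---$[\D_\A, a^\op] \in \End_\B(\E)$ and $[\D_\B, b] \in \End_\A(\E)$---come directly from Lemma~\ref{lm:alg-dee}. For the other two, I exploit the fact that the left action of $b \in \B$ and the right action of $a^\op$ commute as operators on $\E$. The Jacobi identity for the commutator bracket then yields
\[
[[\D_\A, b], a^\op] = [[\D_\A, a^\op], b],
\]
whose right side vanishes because $[\D_\A, a^\op]$ is left $\B$-linear by Lemma~\ref{lm:alg-dee}. Hence $[\D_\A, b]$ commutes with every right multiplication $a^\op$, i.e., is right $\A$-linear. The symmetric identity $[[\D_\B, a^\op], b] = [[\D_\B, b], a^\op] = 0$ gives that $[\D_\B, a^\op]$ is left $\B$-linear. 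This is the explicit content of the remark preceding Lemma~\ref{lm:extra-linear}, which I would invoke rather than rederive.

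For adjointability of the two new operators, I would use the factorisations $[\D_\A, b] = \ga_\A \circ [\nb_\A, b]$ and $[\D_\B, a^\op] = \ga_\B \circ [\nb_\B, a^\op]$, noting that $\ga_\A$ and $\ga_\B$ are bimodule maps built from the $*$-homomorphisms $c_\A$ and $c_\B$, whose images lie in the adjointable endomorphism algebras by definition. The main obstacle I expect is precisely this adjointability step: Lemma~\ref{lm:alg-dee} obtained adjointability of $[\D_\B, b]$ from the clean closed form $[\D_\B, b](e) = c_\B(db)(e)$, whereas $[\nb_\A, b]$ has no such simple description. The combination of its right $\A$-linearity (Lemma~\ref{lm:extra-linear}) with adjointability of the module map $\ga_\A$ on finitely generated projective modules should nevertheless suffice, and the adjoint can be identified by checking on a finite generating set, but this is the step requiring the most care.
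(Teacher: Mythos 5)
Your argument is essentially the paper's: the corollary carries no proof there and is meant to follow, exactly as you lay out, from bilinearity of the commutator, Lemma~\ref{lm:alg-dee}, and the identities $[[\D_\A,b],a^\op]=[[\D_\A,a^\op],b]=0$ and $[[\D_\B,a^\op],b]=[[\D_\B,b],a^\op]=0$ recorded in the remark preceding Lemma~\ref{lm:extra-linear} (equivalently, Lemma~\ref{lm:extra-linear} itself composed with $\ga_\A$, $\ga_\B$). The adjointability point you flag is real but is not treated by the paper either, which implicitly reads the corollary as a statement about module linearity; your proposed route via the factorisations $[\D_\A,b]=\ga_\A\circ[\nb_\A,b]$ and $[\D_\B,a^\op]=\ga_\B\circ[\nb_\B,a^\op]$ is the natural way to go beyond what the paper records.
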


\begin{thm} 
\label{th:dee-is-connection}
Suppose that $\D\: \E \to \E$  satisfies $[\D, a^\op] \in \End_\B(\E)$ and
$[\D, b] \in \End_\A(\E)$, for all $a \in \A$ and $b \in \B$. For
given connections $\nb_\B$ and~$\nb_\A$ on $\E$, there exist module 
maps $\ga_\B$, $\ga_\A$ as in~\eqref{eq:module-maps} and endomorphisms
$T \in \End_\B(\E)$ and $S \in \End_\A(\E)$ such that 
$$
\D = \ga_\A \circ \nb_\A + S = \ga_\B \circ \nb_\B + T.
$$
\end{thm}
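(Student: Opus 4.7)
The plan is to extract $c_\A$ (and, symmetrically, $c_\B$) directly from $\D$ using the universal property of the module of noncommutative 1-forms; once $\ga_\A$ and $\ga_\B$ are so defined, set $S := \D - \ga_\A \circ \nb_\A$ and $T := \D - \ga_\B \circ \nb_\B$ and verify the required linearity.

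By hypothesis, the assignment $\delta(a) := [\D, a^\op]$ defines a map $\A \to \End_\B(\E)$. The ordinary Leibniz identity $[\D, b^\op a^\op] = b^\op [\D, a^\op] + [\D, b^\op] a^\op$ shows that, viewed as $\delta : \A^\op \to \End_\B(\E)$, this is a derivation into the natural $\A^\op$-bimodule structure on $\End_\B(\E)$ inherited from the right action of $\A$ on~$\E$. The universal property of $\Om^1(\A^\op) \isom (\Om^1\A)^\op$ then produces a unique $\A^\op$-bimodule homomorphism $c_\A : (\Om^1\A)^\op \to \End_\B(\E)$ with $c_\A((da)^\op) = [\D, a^\op]$. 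Compatibility with the $*$-structure, namely $c_\A((da^*)^\op) = -c_\A((da)^\op)^*$, comes down to $[\D, a^\op]^* = -[\D,(a^*)^\op]$, which holds under our adjointability assumptions and parallels Lemma~\ref{lm:alg-dee}; one then extends $c_\A$ multiplicatively to a $*$-homomorphism on all of $(\Om^\8\A)^\op$ using freeness of the universal DGA in degree one. The module map $\ga_\A$ defined via~\eqref{eq:module-maps} therefore satisfies $\ga_\A(e \ox da) = [\D, a^\op](e)$.

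With this in place, the Leibniz rule $\nb_\A(ea) = e \ox da + \nb_\A(e)a$ and the right $\A$-linearity of $\ga_\A$ (inherited from $c_\A$ restricting to the right action on~$\E$ in degree zero) give
\begin{equation*}
(\ga_\A \circ \nb_\A)(ea) = \ga_\A(e \ox da) + (\ga_\A \circ \nb_\A)(e)\,a = [\D, a^\op](e) + (\ga_\A \circ \nb_\A)(e)\,a,
\end{equation*}
so that $[\ga_\A \circ \nb_\A,\, a^\op] = [\D, a^\op]$ in $\End_\B(\E)$ and hence $[S, a^\op] = 0$. Thus $S$ is right $\A$-linear, and adjointability of~$S$ (hence membership in $\End_\A(\E)$) follows since $[\D, a^\op]$ is adjointable for each $a \in \A$. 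The entirely symmetric argument, starting from the derivation $b \mapsto [\D, b]$ and the left connection $\nb_\B$, produces $c_\B$, $\ga_\B$ and $T \in \End_\B(\E)$ with the analogous property.

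The main technical point is the well-definedness of $c_\A$ as an $\A^\op$-bimodule map, which is precisely the content of the universal property together with the first-order condition $[\D, a^\op] \in \End_\B(\E)$; the rest reduces to bookkeeping. A subtle point to handle carefully is the distinction between the DGA opposite $(\Om^\8\A)^\op$ and the ordinary algebra opposite $\Om^\8(\A^\op)$, but these coincide on the degree-one part, which is all that enters the formula for $\D$, so the extension to higher degrees does not impact the conclusion.
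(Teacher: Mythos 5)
Your proposal is correct and follows essentially the same route as the paper: define $c_\A$ and $c_\B$ (hence $\ga_\A$, $\ga_\B$) directly from the commutators $[\D,a^\op]$ and $[\D,b]$, set $S := \D - \ga_\A\circ\nb_\A$ and $T := \D - \ga_\B\circ\nb_\B$, and verify the required module-linearity via the Leibniz rule for the connections. Your explicit appeal to the universal property of $\Om^1$ merely makes precise the well-definedness that the paper leaves implicit, so nothing essentially different is involved.
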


\begin{proof}
Given such a $\D$ we define $\ga_\B \: \Om^1\B \ox_\B \E \to \E$ by
$\ga_\B(db \ox e) := [\D, b]\,e$, and likewise define
$\ga_\A \: \E \ox_\A \Om^1\A \to \E$ by 
$\ga_\A(e \ox da) := [\D, a^\op]\,e$.

The associated maps $c_\B \: b_0\,db_1 \mapsto b_0\,[\D, b_1]$ and
$c_\A \: (a_0\,da_1)^\op \mapsto [\D, a_1^\op]\,a_0^\op$ are defined 
on $\Om^1\B$ and $(\Om^1\A)^\op$ respectively. These extend to 
algebra homomorphisms $\Om^\8\B \to \End_\A(\E)$ and
$(\Om^\8\A)^\op \to \End_\B(\E)$, respectively. We may then check that
$$
T := \D - \ga_\B \circ \nb_\B \in \End_\B(\E),  \qquad
S := \D - \ga_\A \circ \nb_\A \in \End_\A(\E).
\eqno \qed
$$
\hideqed
\end{proof}

To deal eventually with general $KK$-classes, we need to take into
account $\Z_2$-graded algebras. So let $\E$ be a $\B$-$\A$-bimodule as
above, but now suppose that $\B$ is a $\Z_2$-graded algebra, and that
$\E$ is $\Z_2$-graded by $\eps$ such that
$\eps b_{\pm} \eps = \pm b_{\pm}$ where $b_+$ and $b_-$ are the even
and odd components of $b \in \B$. We assume that $\A$ commutes with
the grading $\eps$. When we assume that $\E \simeq \B^n p$ is
projective over~$\B$, we shall always take $p = p^2$ in the even
subalgebra of $M_n(\B)$. Denoting the graded commutator by
$[\cdot,\cdot]_\pm$ we obtain the following variant of
Theorem~\ref{th:dee-is-connection}.

\begin{lemma} 
\label{lm:graded-dee}
If $\D\: \E \to \E$  satisfies the graded first-order
condition $[[\D, b]_\pm, a^\op] = 0$, for all $a \in \A$ and
$b \in \B$, then for a given connection $\nb_\B$ on~$\E$, there exists
a module map $\ga_\B$ such that
$\D = \eps\ga_\B \circ \nb_\B  + \eps T$ where $T\: \E \to \E$ is left
$\B$-linear.
\end{lemma}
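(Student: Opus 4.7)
The plan is to reduce the lemma to Theorem~\ref{th:dee-is-connection} by passing from $\D$ to its twist $\D' := \eps\D$. Assuming $\D$ is odd with respect to the grading (as is standard in the $KK$-theoretic setting this lemma addresses), the first step is to verify the key identity
$$
[\D', b] = \eps\,[\D, b]_\pm \word{for every homogeneous} b \in \B,
$$
which follows by a short calculation using $\eps^2 = 1$ and $b\eps = (-1)^{|b|}\eps b$. Since $\A$ commutes with $\eps$ by hypothesis, this yields $[[\D', b], a^\op] = \eps\,[[\D, b]_\pm, a^\op]$, which vanishes by the graded first-order condition; hence $[\D', b] \in \End_\A(\E)$, so $\D'$ meets the hypothesis of Theorem~\ref{th:dee-is-connection} relevant to the $\nb_\B$-decomposition.

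Next I would apply the construction from the proof of that theorem to $\D'$ and the given connection $\nb_\B$. This produces a module map $\ga_\B\: \Om^1\B \ox_\B \E \to \E$, determined on generators by
$$
\ga_\B(b_0\,db_1 \ox e) := b_0\,[\D', b_1]\,e = b_0\,\eps\,[\D, b_1]_\pm\,e,
$$
together with an endomorphism $T \in \End_\B(\E)$ such that $\D' = \ga_\B \circ \nb_\B + T$. Left-multiplying by $\eps$ and using $\eps^2 = 1$ yields $\D = \eps\,\ga_\B \circ \nb_\B + \eps T$, with $T$ left $\B$-linear as required.

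The main obstacle is the sign bookkeeping in the first step, i.e.\ verifying that conjugation by $\eps$ converts the graded commutator $[\D, b]_\pm$ into the ordinary commutator $[\D', b]$. Once this identity is in hand, well-definedness of $\ga_\B$ on the balanced tensor product over $\B$ (via the graded Leibniz rule $[\D, bb']_\pm = [\D, b]_\pm\,b' + (-1)^{|b|} b\,[\D, b']_\pm$) and left $\B$-linearity of $T$ (via the ordinary Leibniz rule for $\nb_\B$) mirror the ungraded proof without modification.
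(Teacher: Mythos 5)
Your proposal is correct and is essentially the paper's own argument: the paper likewise sets $\ga_\B(db \ox e) := [\eps\D, b]\,e$ with the \emph{ordinary} commutator, uses the identity $[\eps\D, b_\pm] = \eps[\D, b_\pm]_\mp$, and defines $T := \eps\D - \ga_\B \circ \nb_\B$, which is exactly your reduction of Lemma~\ref{lm:graded-dee} to (the $\nb_\B$-part of) Theorem~\ref{th:dee-is-connection} applied to $\D' = \eps\D$. The only remark worth making is that your side assumption that $\D$ be odd with respect to $\eps$ is not needed: the key identity $[\eps\D, b] = \eps\,[\D, b]_\pm$ uses only $\eps^2 = 1$ and $\eps\,b_\pm = \pm\,b_\pm\,\eps$, just as in the paper.
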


\begin{proof}
The proof of Theorem \ref{th:dee-is-connection} applies, with only the
following differences. We define $\ga_\B \: \Om^1\B \ox_\B \E \to \E$
by $\ga_\B(db \ox e) := [\eps\D, b]\,e$, using here the ordinary
commutator (to ensure that the Leibniz rule is represented correctly).
Also, $[\eps\D, b_\pm] = \eps[\D,b_\pm]_\mp$. Then, choosing a
connection $\nb_\B$, we find that
$$
T := \eps\,\D - \ga_\B \circ \nb_\B
$$
is $\B$-linear. Hence $\D = \eps\,\ga_\B \circ \nb_\B + \eps\,T$.
\end{proof}

Recall that a connection $\nb$ on a right $\A$-module $\E$ is said to
be \textit{compatible with the $\A$-valued inner product}
$\pairR(\cdot|\cdot)_\A$ if
$$
d\bigl( \pairR(e|f)_\A \bigr) 
= \pairR(e|\nb f)_\A - \pairR(\nb e |f)_\A \word{for all} e,f \in \E,
$$
where on the right hand side the hermitian pairings are extended to
take values in $\Om^1\A$, as follows: if
$\nb f = \sum_i g_i \ox \al_i$ with $g_i \in \E$ and
$\al_i \in \Om^1\A$, then
$$
\pairR(e|\nb f)_\A := \tsum_i \pairR(e|g_i)_\A \,\al_i,  \qquad
\pairR(\nb f|e)_\A := \tsum_i \al_i^*\, \pairR(g_i|e)_\A \,.
$$
For a hermitian left $\B$-module, compatibility of a connection $\nb$ is
expressed, \textit{mutatis mutandis}, by $d\bigl( \pairL_\B(e|f) \bigr)
= - {}\pairL_\B(e|\nb f) + \pairL_\B(\nb e |f)$.

A compatible connection on a finitely generated projective right
$\A$-module $\E = q\A^m$ is of the form
$\nb = q \circ (d \ox 1_m) + A$, where
$A \in \Hom_\A(\E, \E \ox_\A \Om^1\A)$ is self-adjoint, in the sense
that $\pairR(Ae|f)_\A = \pairR(e|Af)_\A$ in $\Om^1\A$, for all
$e,f \in \E$ \cite[Prop.~III.3.6]{Book}.

\subsection{Spectral triples} 
\label{ssc:spec-trips}

In this section we recall the definition of spectral triples and
those basic features and additional properties we need to discuss the
Kasparov product.

\begin{defn} 
\label{df:spec-tri}
A \textit{spectral triple} $(\A,\H,\D)$ consists of a unital%
\footnote{Spectral triples can also be defined over nonunital
algebras; but those are not needed for the present purpose.}
$*$-algebra $\A$, faithfully represented by bounded operators
on a Hilbert space~$\H$ (we write simply~$a$ for the operator 
representing an element $a \in \A$); together with a
selfadjoint operator $\D$ on~$\H$, with dense domain $\Dom\D$, such
that $\Dreg^{-1} \equiv (1 + \D^2)^{-1/2}$ is a compact operator
and, for each $a \in \A$, $a(\Dom\D) \subseteq \Dom\D$ and the 
commutator $[\D, a]$ extends to a bounded operator on~$\H$.

The spectral triple is said to be \textit{even} if there is a
selfadjoint unitary operator $\Ga = \Ga^*$ on~$\H$ (so that
$\Ga^2 = 1$ and thus $\Ga$ determines a $\Z_2$-grading on~$\H$), for
which $[\Ga, a] = 0$ for all $a \in \A$ and $\Ga\D + \D\Ga = 0$. (Note
the consequence for even spectral triples that $[\D, a] \in \A$ only
if $[\D, a] = 0$.) If no such grading is available, the spectral
triple is called~\textit{odd}.
\end{defn}

\begin{rmk} 
Since $\A$ is faithfully represented on~$\H$, we regard $\A$ as a
$*$-subalgebra of $\B(\H)$; its norm closure $A$ is a $C^*$-algebra.
\end{rmk}

\begin{rmk} 
We can talk about \emph{even} spectral triples for $\Z_2$-graded
algebras simply by interpreting all commutators $[\D, a]$, $[\Ga, a]$
as graded commutators. While this is also possible for odd spectral
triples, it is not appropriate from a $KK$-point of view.
\end{rmk}

\begin{eg} 
\label{eg:Dirac-spec-trip}
The Dirac-type operator of a Clifford bundle $E \to M$ on a closed
$C^\infty$ manifold, alluded to in Example~\ref{eg:Dirac-bundle},
gives rise to a spectral triple
$\bigl( \Coo(M), L^2(M,E), \D = c_M \circ \nb^E \bigr)$ over the
algebra $\Coo(M)$.
\end{eg}

\begin{defn} 
\label{df:qc-infty}
The operator $\D$ gives rise to two (commuting) derivations of
operators on~$\H$; we shall denote them by
$$
\d\,T := [\D,T],  \qquad  \dl\,T := [|\D|,T], \word{for} T \in \B(\H).
$$
Note that $\A$ lies within $\Dom\d :=
\set{T \in \B(\H) : T(\Dom\D) \subseteq \Dom\D;\ [\D, T] \in \B(\H)}$.

A spectral triple $(\A,\H,\D)$ is called $QC^\infty$, in the 
terminology of~\cite{CareyPRSHoch}, if
$\A +  \d\A  \subseteq  \Domoo \dl$. (The terms \textit{regular}
\cite{ConnesGrav,Polaris} and \textit{smooth}~\cite{RennieSmooth} are 
synonymous with~$QC^\infty$.)
\end{defn}

\begin{rmk} 
One may replace the derivation $\dl = [|\D|,\cdot]$ by 
$\tilde\dl := [\Dreg,\cdot]$ in the definition of a $QC^\infty$
spectral triple, since $\Domoo \tilde\dl = \Domoo \dl$, as is easily
checked. This is often useful to sidestep issues that arise when
$\ker\D \neq 0$.
\end{rmk}

\begin{defn} 
\label{df:delta-top}
If $(\A,\H,\D)$ is a $QC^\infty$ spectral triple, one can gift $\A$ 
with a locally convex topology, finer than the norm topology of~$A$,
defined by the family of seminorms 
\begin{equation}
q_m(a) := \|\dl^m a\|  \word{and}
q'_m(a) := \|\dl^m([\D,a])\|,  \quad  m = 0,1,2,\dots
\label{eq:dl-snorms} 
\end{equation}
for which the involution $a \mapsto a^*$ is continuous. (Note that
$q_0$ is just the operator norm of~$A$.) Or one can replace the 
seminorms $\set{q_m : m \in \N}$ by the equivalent family of
seminorms~\cite{ConnesRecon}:
$$
p_m(a) := \|\rho_m(a)\|,  \word{where}
\rho_m(a) := \begin{pmatrix}
a & \dl(a) & \cdots & \dl^m(a) \\
0 & a & \ddots & \vdots \\
\vdots & \ddots & a & \dl(a) \\
0 & \cdots & 0 & a  \end{pmatrix}.
$$
The seminorms $p_m$ are submultiplicative: $p_m(ab) \leq
p_m(a)\,p_m(b)$, since $\rho_m$ is a representation of~$\A$. If $\A$
is \textit{complete} in this topology, then $\A$ is a
\textit{Fr\'echet algebra}%
\footnote{A Fr\'echet algebra is defined to be a complete locally
convex algebra whose topology is defined by a countable family of
\textit{submultiplicative} seminorms. Note that the seminorm
$p'_m(a) := p_m([\D,a])$ is not submultiplicative, but the sum
$p_m + p'_m$ will be.}
for which the seminorms $q'_m$ are continuous, by
\cite[Prop.~2.2]{ConnesRecon}.
\end{defn}

Alternatively, if $\A$ is not complete in the topology given by the
seminorms \eqref{eq:dl-snorms}, one can replace $\A$ by its
completion~$\A_\dl$. Assuming that $(\A,\H,\D)$ be a $QC^\infty$
spectral triple, it follows from \cite[Lemma~16]{RennieSmooth} that
$(\A_\dl,\H,\D)$ is also a $QC^\infty$ spectral triple, and moreover
that $\A_\dl$ is a pre-$C^*$-algebra.

Thus, whenever we are given a $QC^\infty$ spectral triple, we may and
shall always assume, by completing its algebra $\A$ if necessary, that
$\A$ is a \textit{Fr\'echet pre-$C^*$-algebra}.

If $T \in \Dom \dl^m$ and $\xi \in \Dom |\D|^m$, then 
$T\xi \in \Dom |\D|^m$ and the equality
\begin{equation}
|\D|^m T \xi = \sum_{k=0}^m \binom{m}{k} \,\dl^k(T)\,|\D|^{m-k}\xi
\label{eq:smooth-switch} 
\end{equation}
holds (by induction on~$m$). There is a similar formula with $\dl$
and~$|\D|$ replaced by $\tilde\dl$ and~$\Dreg$, if desired. Thus, if
$(\A,\H,\D)$ is $QC^\infty$, then the subspace
$$
\H_\infty := \Domoo \D = \Domoo |\D| = \Domoo \Dreg
$$
is mapped to itself by any $a \in \A$.

\begin{defn} 
\label{df:summum-bonum}
Let $(\A,\H,\D)$ be a spectral triple and $\I \subset \K(\H)$ a
(two-sided) symmetric ideal of compact operators. We say that
$(\A,\H,\D)$ is \textit{$\I$-summable} if $\Dreg^{-1} \in \I$.

Let $\L^s = \L^s(\H)$, for $s \geq 1$, be the Schatten ideal of
operators~$T$ for which $|T|^s$ is trace-class. If the spectral triple
is $\L^s$ summable for all $s > p$ (with $p \geq 1$), then
$(\A,\H,\D)$ is \textit{finitely summable}, and the infimum of such
$p$ is called its \textit{spectral dimension}. This holds true in the 
important special cases where we can take $\I = \L^{p,\infty}$ or the 
larger ideal $\Zz_p$ studied in \cite{CareyGRS1} and~\cite{CareyRSS}.
(A positive operator $A$ lies in $\Zz_p$ if and only if 
$A^p \in \Zz_1 = \L^{1,\infty}$, the Dixmier ideal.)
\end{defn}

The next interesting property of spectral triples, namely, the first
order condition, only makes sense for spectral triples defined over
tensor products of algebras.

\begin{defn} 
\label{df:first-order}
The notation $(\A \ox \B, \H, \D)$ for a spectral triple means that
two algebras $\A$ and $\B$ are faithfully represented on~$\H$ by
\textit{commuting} bounded operators, so that the tensor product
$\A \ox \B$ acts on~$\H$ and elements of $\A \ox \B$ have 
bounded commutators with~$\D$.

We say that the spectral triple $(\A \ox \B, \H, \D)$ satisfies the
\textit{first order condition} if $[[\D,a], b] = 0$ for all $a \in \A$
and $b \in \B$.
\end{defn}

\begin{defn} 
\label{df:best-algebra}
If $(\A,\H,\D)$ is an even spectral triple, we shall use the notation
$\Cc \equiv \CDA$ for the $\Z_2$-graded subalgebra of $\B(\H)$
generated by $\A$ (of even degree) and $\set{[\D,a] : a \in \A}$ (of
odd degree). There is an algebra homomorphism
$\pi_\D \: \Om^\8\A \to \CDA$ given by
\begin{equation}
\pi_\D(a_0\,da_1 \cdots da_k) := a_0\,[\D, a_1]\cdots [\D, a_k].
\label{eq:best-homom} 
\end{equation}
If $(\A,\H,\D)$ is an odd spectral triple, we can consider the even
spectral triple $(\A, \H\oplus\H, \D')$ where $\A$ acts diagonally,
$\D' = \twobytwo{\D}{0}{0}{-\D}$, and the grading is given by
$\twobytwo{0}{1}{1}{0}$. Then we obtain a $\Z_2$-grading on
$\mathcal{C}_{\D'}(\A)$.

Recall that $\Om^\8\A$ becomes an involutive algebra by setting
$(da)^* := - d(a^*)$; then $\pi_\D$ is a $*$-rep\-resentation of the
differential forms, and so also the Hochschild chains~\cite{Loday}, of
$\A$ by operators on~$\H$.
\end{defn}

\begin{prop} 
\label{pr:first-order}
Let $(\A,\H,\D,\Ga)$ be an even spectral triple and suppose that
$\H_\infty$ is a finite projective left $\A$-module and that
$\H = L^2(\H_\infty,\psi)$. Then with
\begin{equation}
\B = \set{T \in \B(\H) :  T(\H_\infty) \subseteq \H_\infty, \
[T,\Ga] = 0, \ [T,w] = 0 \text{ for } w \in \CDA},
\label{eq:first-order} 
\end{equation}
we find that $(\A \ox \B,\H,\D,\Ga)$ is an even spectral triple
satisfying the first order condition.
\end{prop}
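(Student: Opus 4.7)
The plan is to verify in turn the defining properties of an even spectral triple for $(\A \ox \B, \H, \D, \Ga)$, together with the first order condition. The routine parts come first. Since $\A \subseteq \CDA$, every $T \in \B$ commutes with $\A$, so $\A$ and $\B$ act by commuting bounded operators on~$\H$, the tensor product $\A \ox \B$ is faithfully represented, and the grading $\Ga$ commutes with $\A$ (by the even hypothesis) and with $\B$ (by definition) while continuing to anticommute with~$\D$. The first order condition is then immediate: for $a \in \A$ and $b \in \B$, one has $[\D, a] \in \CDA$ and $b$ commutes with every element of $\CDA$, so $[[\D, a], b] = 0$.

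The heart of the argument is that each $b \in \B$ preserves $\Dom \D$ and has a bounded commutator with~$\D$. I would work first on $\H_\infty = \Domoo \D$, which is preserved by~$b$ by the very definition of~$\B$ and by~$\D$ by definition of the smooth domain. For $a \in \A$ and $\xi \in \H_\infty$, the calculation
\[
[\D, b](a\xi)
= [\D, a]\,b\xi + a\D b\xi - b\,[\D, a]\xi - a b\D\xi
= a\,(\D b - b\D)\xi = a\,[\D, b]\xi,
\]
valid because $b$ commutes with the elements $a$ and $[\D, a]$ of~$\CDA$, shows that $[\D, b]$ is a left $\A$-linear map on the finite projective left $\A$-module~$\H_\infty$. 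Proposition~\ref{pr:lin-bdd}, in its left-module variant noted in the remark immediately following it, then extends $[\D, b]$ to a bounded operator $T \in \B(\H)$.

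A closedness argument promotes the conclusion from $\H_\infty$ to~$\Dom \D$: the smooth domain of a selfadjoint operator is always a core, so any $\xi \in \Dom \D$ is a graph-norm limit of vectors $\xi_n \in \H_\infty$; then $b\xi_n \to b\xi$ while $\D b\xi_n = T\xi_n + b\D\xi_n \to T\xi + b\D\xi$, and closedness of~$\D$ yields $b\xi \in \Dom \D$ and $[\D, b]\xi = T\xi$. Compactness of $\Dreg^{-1}$ is inherited from the original spectral triple, and the $*$-algebra property of~$\B$—the only nontrivial part of which is that $b^*$ also preserves $\H_\infty$—follows from selfadjointness of~$\D$ together with the boundedness of $[\D, b]$ just established.

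The main obstacle is clearly the boundedness step for $[\D, b]$. The definition of~$\B$ has been set up precisely so that $[\D, b]$ inherits the $\A$-linearity needed on~$\H_\infty$; once that is in hand, finite projectivity of $\H_\infty$ over~$\A$, combined with $\H = L^2(\H_\infty, \psi)$, lets Proposition~\ref{pr:lin-bdd} do the analytic work. Every other requirement is either tautological from the definition of~$\B$ or a direct consequence of the original spectral-triple hypotheses.
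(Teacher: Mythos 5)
Your proof is correct and follows essentially the same route as the paper: the key observation in both is that $[[\D,b],a]=-[b,[\D,a]]=0$ because $b$ commutes with $\CDA$, so $[\D,b]$ is left $\A$-linear on the finite projective module $\H_\infty$, and Proposition~\ref{pr:lin-bdd} then gives boundedness. Your additional remarks on the core/closedness argument and on the remaining spectral-triple axioms are just the routine details the paper leaves implicit.
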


\begin{proof}
All we need to show is the boundedness of $[\D, T]$ for $T \in \B$. 
However for $a \in \A$ we get
$$
[[\D, T], a] = - [T, [\D, a]] = 0,
$$
because $T$ commutes with $\CDA$ by assumption. Hence $[\D,T]$ is left
$\A$-linear and maps $\H_\infty$ to itself, so
Proposition~\ref{pr:lin-bdd} implies that $[\D, T]$ is bounded. The
first order condition is obvious.
\end{proof}

\begin{eg} 
\label{eg:Dirac-first-order}
In the context of Examples \ref{eg:Dirac-bundle}
and~\ref{eg:Dirac-spec-trip}, Proposition~\ref{pr:first-order} shows
that the data 
$\bigl( \Coo(M) \ox \Coo(M), L^2(M,E), \D = c_M \circ \nb^E \bigr)$
form a spectral triple. This follows since both the left and right
actions of $\Coo(M)$ commute with the action of the Clifford algebra,
and moreover the smooth sections of $E$ are finite projective over
$\Coo(M)$ and form the smooth domain of $\D$; see~\cite{RennieSmooth}.
\end{eg}

\begin{rmk} 
If $(\A,\H,\D,\Ga)$ is $QC^\infty$ then $(\A \ox \B,\H,\D,\Ga)$ is
$QC^\infty$ for the action of $\A$, but not necessarily for  the 
action of~$\B$. Thus we shall say ``$QC^\infty$ for~$\A$'' in such 
cases, when considering spectral triples defined over a tensor 
product of algebras.
\end{rmk}

Later we will also want some information about the $\A$-module
structure of $\CDA$. It is immediate that $\CDA$ is an $\A$-bimodule.
Regarding $\A$-valued inner products on $\CDA$, the following result
is helpful. Recall that an \textit{operator-valued weight} is a
positive linear map $\Psi\: \Cc \to \A$ from a $*$-algebra $\Cc$ onto
a $*$-subalgebra $\A$ that satisfies $\Psi(awb) = a\,\Psi(w)\,b$ for
$w \in \Cc$ and $a,b \in \A$; thus it behaves like a conditional
expectation except that it need not be unit-preserving
\cite[Appendix~A]{Kosaki}, nor need it extend to the $C^*$-completion
of~$\Cc$ as a bounded map.

\begin{lemma} 
\label{lm:op-weight}
Let $\Cc$ be a unital $*$-algebra and let $\A$ be a unital
$*$-subalgebra with the same unit~$1$ (i.e., the inclusion
$\A \hookto \Cc$ is unit-preserving). The existence of a left
$\A$-valued inner product $\pairL_\A(\cdot|\cdot)$ on $\Cc$ such that
right multiplication of $\Cc$ on itself defines an adjointable action
is equivalent to the existence of a faithful operator-valued weight
$\Psi\: \Cc \to \A$.
\end{lemma}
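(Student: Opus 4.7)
The plan is to exhibit an explicit mutual inverse bijection between the two data, given by the formulas
\[
\pairL_\A(x|y) := \Psi(xy^*) \qquad\text{and}\qquad \Psi(c) := \pairL_\A(c|1).
\]
Each direction is essentially a translation exercise: one rewrites each axiom on one side as an axiom on the other, using only the unit of $\A$, the hypothesis that the right action is adjointable, and (for one step) the $*$-structure of $\Cc$.

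For the direction from weight to pairing, set $\pairL_\A(x|y) := \Psi(xy^*)$. Sesquilinearity is immediate; left $\A$-linearity $\pairL_\A(ax|yb) = a\,\pairL_\A(x|y)\,b^*$ is the bimodule property $\Psi(awb) = a\,\Psi(w)\,b$ applied to $w = xy^*$; positivity $\pairL_\A(x|x) = \Psi(xx^*) \ge 0$ is positivity of $\Psi$; definiteness is faithfulness; Hermitian symmetry $\pairL_\A(x|y)^* = \pairL_\A(y|x)$ reduces to $\Psi(w^*) = \Psi(w)^*$, which follows from positivity of $\Psi$ via the polarization identity $xy^* = \tfrac14\sum_{k=0}^3 i^k(x + i^k y)(x + i^k y)^*$. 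Finally, adjointability of right multiplication is the computation
\[
\pairL_\A(xc|y) = \Psi(xcy^*) = \Psi\bigl(x(yc^*)^*\bigr) = \pairL_\A(x|yc^*),
\]
so $R_c^\dagger = R_{c^*}$.

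For the converse direction, set $\Psi(c) := \pairL_\A(c|1)$. Linearity is clear. The bimodule property unpacks as
\[
\Psi(awb) = \pairL_\A(awb|1) = a\,\pairL_\A(wb|1) = a\,\pairL_\A(w|b^*) = a\,\pairL_\A(w|1)\,b = a\,\Psi(w)\,b,
\]
using in turn left $\A$-linearity, adjointability of $R_b$ with $R_b^\dagger = R_{b^*}$, and antilinearity in the second entry. Positivity comes from
\[
\Psi(c^*c) = \pairL_\A(c^*c|1) = \pairL_\A\bigl(c^*|R_c^\dagger(1)\bigr) = \pairL_\A(c^*|c^*) \ge 0,
\]
and faithfulness from definiteness of the pairing. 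Plugging each construction into the other recovers the original data, so the correspondence is bijective.

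The only genuine subtlety is the identification $R_c^\dagger = R_{c^*}$, which is used crucially in both directions (to verify adjointability on one side and positivity plus the bimodule property on the other). I read the hypothesis ``right multiplication defines an adjointable action'' as requiring exactly that $c \mapsto R_c$ be a $*$-anti-representation of $\Cc$ by adjointable operators, so that this identification holds by assumption. If one instead only assumes each $R_c$ is adjointable for some unspecified adjoint, one must additionally invoke uniqueness of adjoints and check that $R_{c^*}$ genuinely implements the adjoint of $R_c$; this compatibility between the $*$-structure of $\Cc$ and the inner product is the main point where the two sides of the equivalence really meet.
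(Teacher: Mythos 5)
Your proposal is correct and follows essentially the same route as the paper: the mutually inverse formulas $\Psi(w) := \pairL_\A(w|1)$ and $\pairL_\A(u|v) := \Psi(uv^*)$, with the bimodule property and positivity verified exactly as in the paper's computation (which likewise implicitly reads ``adjointable action'' as $R_c^\dagger = R_{c^*}$, the point you flag). Your extra details for the converse direction (Hermitian symmetry via polarization, adjointability of $R_c$) merely fill in what the paper dismisses as routine.
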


\begin{proof}
Suppose first that $\pairL_\A(\cdot|\cdot)$ is an $\A$-valued left
inner product on $\Cc$. Define
$$
\Psi:\Cc \to \A \word{by} \Psi(w) := \pairL_\A(w1|1) = \pairL_\A(w|1).
$$
Then, assuming right multiplication to be adjointable, we get, for
$a,b \in \A$ and $w \in \Cc$,
\begin{align*}
\Psi(a w b) 
&= \pairL_\A(awb|1) = a\,\pairL_\A(wb|1) 
\\
&= a\,\pairL_\A(w|b^*) = a\,\pairL_\A(w|1)\,b = a\,\Psi(w)\,b.
\end{align*}
Similarly, positivity of~$\Psi$ follows from
$$
\Psi(w^*w) = \pairL_\A(w^*w|1) = \pairL_\A(w^*|w^*) \geq 0,
$$
with equality if and only if $w^* = 0$, if and only if $w = 0$.

Conversely, suppose that a faithful operator valued weight
$\Psi : \Cc \to \A$ is given. Define
$$
\pairL_\A(u|v) := \Psi(u v^*),   \word{for}  u,v \in \Cc.
$$
One verifies easily that this defines a positive definite left
$\A$-module hermitian form on~$\Cc$, for the left action of~$\A$
coming from the inclusion $\A \subset \Cc$. The adjointability of
right multiplication by~$\Cc$ is clear.
\end{proof}

\begin{rmk} 
If $\pairL_\A(1|1) = 1 \in \A$ then the corresponding
operator-valued weight~$\Psi$ is an expectation, and conversely.
\end{rmk}

\subsection{Kasparov products using bimodule connections} 
\label{ssc:kas-prods}

In order to make the passage from spin$^c$ to Riemannian manifolds, we
will employ unbounded Kasparov products as described
in~\cite{Kucerovsky}.

In this subsection we assume that $(\A \ox \B^\op, \H, \D, \eps)$ is
an even spectral triple (in the $\Z_2$-graded sense for the
algebra~$\B$) satisfying smoothness and first order conditions. We
also require that $\H_\infty$ be finitely generated and projective as
a right module over~$\B$.

{}From subsection~\ref{ssc:Dirac-type}, this means that we can
represent $\D$ using a $\B$-compatible connection, so that
$\D = \eps(\ga \circ \nb_\B^\H)+\eps\,T$ where $T$ is $\B$-linear,
$\nb_\B^\H : \H_\infty \to \H_\infty \ox_\B \Om^1\B$, and
\begin{equation}
\ga : \H_\infty \ox_\B \Om^1\B \to \H_\infty  \word{is given by}
\ga(\xi \ox b_0\,db_1) = [\eps\D, b_1^\op](\xi\,b_0).
\label{eq:right-thinking} 
\end{equation}
(The grading $\eps$ is required only when $\B$ is $\Z_2$-graded and we
employ a graded first order condition. Otherwise, just put
$\eps = 1$.) Note that $\ga$ is right $\B$-linear since
\begin{align*}
\ga(\xi \ox b_0\,db_1\,b_2)
&= \ga(\xi \ox b_0\,d(b_1b_2)) - \ga(\xi \ox b_0b_1\,db_2)
\\
&= [\eps\D, b_2^\op b_1^\op](\xi\,b_0) - [\eps\D, b_2^\op](\xi\,b_0b_1)
= b_2^\op \,[\eps\D, b_1^\op](\xi\,b_0)
\\
&= [\eps\D, b_1^\op](\xi\,b_0)b_2 = \ga(\xi \ox b_0\,db_1)b_2.
\end{align*}

\textit{Assume now that we are also given a $\B$-$\Cc$-bimodule $\E$}
which is finitely generated and projective as a left $\B$-module, so
$\E \isom \B^n q$ where $q\in M_n(\B)$ is a projector. We take the
$\B$-valued inner product $\pairL_\B(\cdot|\cdot)$ on $\E$ given by
this identification. Choose an (arbitrary but fixed) connection
$\nb_\B^\E \: \E \to \Om^1\B \ox_\B \E$ compatible with this inner
product. We let $\eps'$ be a $\Z_2$-grading of $\E$ such that
$$
\eps' b_\pm \eps' = \pm\,b_\pm \text{ for } b \in \B,  \qquad
c \eps' = \eps' c \text{ for } c\in \Cc. 
$$
We marry the connections $\nb_\B^\H$ and $\nb_\B^\E$ in the usual way,
by defining a linear map $\nb_\B$ on the balanced tensor product
$\H_\infty \ox_\B \E$ by
$$
\nb_\B : \H_\infty \ox_\B \E \to \H_\infty \ox_\B \Om^1\B \ox_\B \E,
\qquad
\nb_\B(\xi \ox e) := \nb_\B^\H(\xi) \ox e + \xi \ox \nb_\B^\E(e).
$$
To see that $\nb_\B$ is indeed well defined, we remark that
\begin{align*}
\nb_\B(\xi b \ox e)
&=  \nb_\B^\H(\xi b) \ox e + \xi b \ox \nb_\B^\E(e)
= \nb_\B^\H(\xi) b \ox e + \xi \ox db \ox e + \xi \ox b\,\nb_\B^\E(e)
\\
&= \nb_\B^\H(\xi) \ox be + \xi \ox \nb_\B^\E(be)
= \nb_\B(\xi \ox be).
\end{align*}
Similarly the operator $\eps\ox\eps'$ is well-defined on the tensor
product.

In consequence, the linear operator
$\Dhat \: \H_\infty \ox_\B \E \to \H_\infty \ox_\B \E$ given by
$$
\Dhat := (\eps\ox\eps')(\ga \ox 1_\E) \circ \nb_\B+\eps\,T\ox\eps'
$$
is also well defined. 

\vspace{6pt}

To examine $\Dhat$ more closely, it helps to work in a framework where
the isomorphism $\E \isom \B^n q$ is explicit. If 
$e = (b_1,\dots,b_n) = (b_1,\dots,b_n)q \in \E$, we write
$$
\phi : \H_\infty \ox_\B \E \to (\H_\infty \ox \C^n)q, \qquad
\phi(\xi \ox e) := (\xi b_1,\dots,\xi b_n)q = (\xi b_1,\dots,\xi b_n),
$$
and likewise
\begin{gather*}
\phihat : \H_\infty \ox_\B \Om^1\B \ox_\B \E 
\to \H_\infty \ox_\B (\Om^1\B)^n q,
\\
\phihat(\xi \ox \om \ox e) := (\xi \ox \om b_1,\dots,\xi \ox \om b_n)q
= (\xi \ox \om b_1,\dots,\xi \ox \om b_n).
\end{gather*}

\begin{lemma} 
\label{lm:big-dee}
The linear map $\phi$ is an $\A$-$\Cc$-bimodule isomorphism satisfying
\begin{enumerate}
\item[\textup{(a)}]
$\phi \circ (\ga \ox 1_\E) = (\ga \ox 1_n) \circ \phihat\,$; and
\item[\textup{(b)}]
$\phi \circ \Dhat \circ \phi^{-1}
= q^\op(\D \ox \eps' 1_n)q^\op + \Ahat$, where $\Ahat$ is a bounded and
selfadjoint operator on $(\H \ox \C^n)q = \H^n q$.
\end{enumerate}
Moreover, $\Dhat$ is a selfadjoint operator on the Hilbert space
$\H \ox_\B \E$.
\end{lemma}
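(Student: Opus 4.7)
The plan is to use the isomorphism $\phi$ to transport $\Dhat$ into a concrete operator on $\H^n q$, where part (b) provides an explicit matrix form that lets us settle selfadjointness by standard perturbation arguments.

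For (a), I would check the identity on elementary tensors $\xi \ox \om \ox e$ with $e = (b_1,\dots,b_n)q$. The left side evaluates to $\phi(\ga(\xi \ox \om) \ox e) = (\ga(\xi \ox \om) b_1, \dots, \ga(\xi \ox \om) b_n)q$, while the right side evaluates to $(\ga \ox 1_n)\phihat(\xi \ox \om \ox e) = (\ga(\xi \ox \om b_1), \dots, \ga(\xi \ox \om b_n))q$. Equality is precisely the right $\B$-linearity of $\ga$ already verified in the display following~\eqref{eq:right-thinking}. The $\A$-$\Cc$-bimodule structure is preserved since $\phi$ identifies left multiplication by $\A$ with diagonal left multiplication on $\H_\infty^n q$, and right multiplication by $\Cc$ on $\E$ with the obvious right action on $\B^n q$ that passes through~$q$.

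For (b), I would use the standard decomposition of a compatible left connection on $\B^n q$ analogous to~\cite[Prop.~III.3.6]{Book}: namely $\nb^\E_\B(vq) = (dv)q + v\,\al$, with $\al$ a bounded selfadjoint $\B$-bimodule-valued one-form. Substituting $\nb_\B(\xi \ox e) = \nb^\H \xi \ox e + \xi \ox \nb^\E e$ into $\Dhat = (\eps \ox \eps')(\ga \ox 1_\E) \circ \nb_\B + \eps T \ox \eps'$ and conjugating by $\phi$ via part (a), the Grassmann component contributes $q^\op(\D \ox \eps' 1_n)q^\op$ on the coordinates $(\xi b_1,\dots,\xi b_n)$. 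Every remaining contribution -- from $\al$, from $\eps T \ox \eps'$, and from the commutator appearing when one moves $q^\op$ across $\D \ox 1_n$ -- is a bounded endomorphism of $\H^n q$: boundedness of the $\al$-term relies on Proposition~\ref{pr:lin-bdd}, boundedness of the $q$-commutator on the bounded commutators $[\D,b]$ for $b \in \B$ (part of the spectral triple hypothesis), and $T \in \End_\B(\E)$ is bounded by the same Proposition~\ref{pr:lin-bdd}. Gathering these into $\Ahat$ and using compatibility of $\nb^\E_\B$ with $\pairL_\B(\cdot|\cdot)$ together with selfadjointness of $\D$ and of $q^\op(\D \ox \eps' 1_n)q^\op$ on its symmetric domain yields $\Ahat = \Ahat^*$.

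For selfadjointness of $\Dhat$, observe that $\phi$ extends to a unitary between the Hilbert completions $\H \ox_\B \E$ and $\H^n q$, by matching the inner products coming from $\pairL_\B(\cdot|\cdot)$ on $\B^n q$. Thus it suffices to check essential selfadjointness of $q^\op(\D \ox \eps' 1_n)q^\op + \Ahat$ on the core $\H_\infty^n q$. Since $\Ahat$ is bounded selfadjoint, Kato--Rellich reduces this to $q^\op(\D \ox \eps' 1_n)q^\op$ itself. Here $\D \ox \eps' 1_n$ is selfadjoint on $\H^n$, $q^\op$ is a selfadjoint projection commuting with $\eps' 1_n$ (as $q$ is even for the grading~$\eps'$), and $[\D \ox 1_n, q^\op]$ is bounded, by the first-order condition applied entry-wise to $q \in M_n(\B)$. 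Hence the compression differs from the restriction of $\D \ox \eps' 1_n$ to its $q^\op$-invariant subspace by a bounded symmetric operator, yielding essential selfadjointness on the natural core. The main obstacle in the whole argument is precisely this last step: compressing an unbounded selfadjoint operator by a non-commuting projection does not in general yield a selfadjoint operator, and it is exactly the first-order condition on $\B$ in the original spectral triple that rescues the construction.
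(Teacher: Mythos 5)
Your part (a) and the broad architecture of (b) match the paper: decompose the connection as $\nb_\B^\E = q^\op(d \ox 1_n) + A$, transport by $\phi$, exhibit $\phi\circ\Dhat\circ\phi^{-1}$ as the compression $q^\op(\D \ox \eps' 1_n)q^\op$ plus a bounded selfadjoint perturbation, and deduce selfadjointness of $\Dhat$ from selfadjointness of the compression. For that last step the paper computes the domain of the adjoint directly, whereas you invoke boundedness of $[\D \ox 1_n, q^\op]$ and Kato--Rellich; that route can be made to work, but note that $\H^n q$ is \emph{not} invariant under $\D \ox \eps' 1_n$ (so ``restriction to its $q^\op$-invariant subspace'' is not an operator on $\H^n q$; one should instead split off the bounded off-diagonal part and reduce the resulting diagonal operator by $q^\op$), and that the relevant input is the boundedness of the commutators $[\D,b^\op]$ for $b \in \B$, part of the spectral triple hypothesis, not the first order condition $[[\D,a],b^\op]=0$.

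The genuine gap is in your assembly and boundedness of $\Ahat$. First, the bookkeeping: the Grassmann part of $\nb_\B$ reproduces $\eps\ga\circ\nb_\B^\H = \D - \eps T$ coordinatewise, so the term $\eps T \ox \eps'$ is exactly what is needed to complete the compression $q^\op(\D \ox \eps' 1_n)q^\op$ (it commutes with $q^\op$ because $T$ is right $\B$-linear); it does not belong in $\Ahat$, and no commutator ``from moving $q^\op$ across $\D \ox 1_n$'' arises in (b). This matters because $T = \eps\D - \ga\circ\nb_\B^\H$ is a difference of unbounded operators and there is no reason, under the standing hypotheses of this subsection, for it to extend to a bounded operator on $\H$; your version of the decomposition would need such a statement, which is neither available nor necessary. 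Second, your justification for boundedness via Proposition~\ref{pr:lin-bdd} does not apply: that proposition requires $\H = L^2(\H_\infty,\psi)$ for a faithful positive functional $\psi$ on the coefficient algebra of the finite projective module, which here would have to be $\B$, and no such hypothesis is assumed at this stage (the $L^2$ condition enters only later, for the algebra $\A$, in the second half of Theorem~\ref{th:nice-product}). The correct and more elementary argument, as in the paper, is to write $A = \sum_{i,j}\om_{ij}\ox e_{ij}$ with $\om_{ij} \in \Om^1\B$, so that $\Ahat$ is multiplication by the matrix with entries $c(\om_{ij}^\op)$, where $c((b_0\,db_1)^\op) = [\eps\D,b_1^\op]\,b_0^\op$ is bounded precisely because $[\D,b^\op]$ is bounded for every $b \in \B$; selfadjointness of $\Ahat$ then follows from the compatibility of $\nb_\B^\E$ with the $\B$-valued inner product, which gives $c(\om_{ij}^\op)^* = c(\om_{ji}^\op)$, together with the explicit scalar product on $\H^n q$.
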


\begin{rmk} 
Since $1\in\B$ and $q\in M_n(\B)$ are even elements, we can define a
$\Z_2$-grading on $\B^nq$ by setting
$(b_1,\dots,b_n)_+ := (b_{1+},\dots,b_{n+})$. If, abusing notation, we
denote this grading by $\eps'$, then 
$\phi \circ (\eps \ox \eps') = (\eps \ox \eps') \circ \phi$. 
Similarly, if $T \: \H_\infty \to \H_\infty$ is $\B$-linear,
$\phi \circ (\eps\,T \ox \eps') = (\eps\,T \ox \eps') \circ \phi$.
\end{rmk}

\begin{proof}
The left $\A$-linearity of~$\phi$ is obvious; its right 
$\Cc$-linearity comes from that of the identification 
$\E \isom \B^n q$.

Ad~(a): Using the right $\B$-linearity of~$\ga$, it suffices to
evaluate
\begin{align*} 
\phi \circ (\ga \ox 1_\E)(\xi \ox \om \ox e) 
&= \phi(\ga(\xi \ox \om) \ox e)
= (\ga(\xi \ox \om)b_1, \dots, \ga(\xi \ox \om)b_n)
\\
&= (\ga(\xi \ox \om b_1), \dots, \ga(\xi \ox \om b_n))
= (\ga \ox 1_n)(\xi \ox \om b_1, \dots, \xi \ox \om b_n)
\\
&= (\ga \ox 1_n) \circ \phihat(\xi \ox \om \ox e).
\end{align*}

Ad~(b): We  write $\nb_\B^\E = q^\op(d \ox 1_n) + A$ with
$A \in \Hom_\B(\E, \Om^1\B \ox_\B \E)$, and let $u_k$ denote the
standard unit vectors in $\C^n$. Then we find that
\begin{align*}
q^\op(\ga \ox 1_n) &(\nb_\B^\H \ox 1_n)q^\op \bigl( 
\phi(\xi \ox e) \bigr)
= q^\op(\ga \ox 1_n)\bigl( 
(\nb_\B^\H \ox 1_n)(\xi b_1,\dots,\xi b_n) \bigr)
\\
&= q^\op(\ga \ox 1_n)\bigl(
\nb_\B^\H(\xi)b_1, \dots, \nb_\B^\H(\xi)b_n \bigr)
+ q^\op(\ga \ox 1_n)(\xi \ox db_1, \dots, \xi \ox db_n)
\\
&= \bigl( \ga(\nb_\B^\H(\xi))b_1,\dots,\ga(\nb_\B^\H(\xi))b_n \bigr) q
+ \bigl( \ga(\xi \ox db_1), \dots, \ga(\xi \ox db_n) \bigr) q
\\
&= (\ga \ox 1_n)\bigl( \phihat(\nb_\B^\H(\xi) \ox e) \bigr)
+ \tsum_{j,k=1}^n (\ga \ox 1_n) \bigl( 
\phihat(\xi \ox db_j \ox u_k q_{kj}) \bigr)
\\
&= \phi(\ga \ox 1_\E) \bigl( \nb_\B^\H(\xi) \ox e
+ \tsum_{j,k=1}^n \xi \ox db_j \ox u_k q_{kj} \bigr)
\\
&= \phi(\ga \ox 1_\E) \bigl( \nb_\B^\H(\xi) \ox e
+ \xi \ox \nb_\B^\E(e) - \xi \ox A(e) \bigr)
\\
&=\phi(\ga \ox 1_\E) \bigl( \nb_\B(\xi \ox e) - \xi \ox A(e) \bigr).
\end{align*}
Consequently, multiplying by $\eps \ox \eps'$ we find 
$$
q^\op (\D\ox \eps' 1_n) q^\op (\phi(\xi\ox e))
= \phi(\Dhat(\xi \ox e)) - \phi\bigl( (\eps\ox\eps') (\ga \ox 1_\E)
(1_{\H_\infty} \ox A) (\xi \ox e) \bigr).
$$
On setting $\Ahat := \phi \circ (\eps \ox \eps') \circ (\ga \ox 1_\E)
\circ (1_{\H_\infty} \ox A) \circ \phi^{-1}$, we find that
$$
\phi \circ \Dhat \circ\phi^{-1} = q^\op(\D \ox \eps' 1_n)q^\op + \Ahat
$$
as operators on $(\H_\infty \ox \C^n)q = \H_\infty^n q$.

We may now write $A = \sum_{i,j=1}^n \om_{ij} \ox e_{ij}$ where the
$e_{ij}$ are matrix units and $\om_{ij} \in \Om^1\B$. Mindful
of~\eqref{eq:right-thinking}, for $\rho = b_0\,db_1 \in \Om^1\B$ we
write $c(\rho^\op) := [\eps\D, b_1^\op]\,b_0^\op$ for the right
$\B$-linear operator on~$\H_\infty$ corresponding to~$\rho$. Using
Theorem~\ref{th:dee-is-connection}, the selfadjointness of~$\D$ and
the $\B$-compatibility of the connection $\nb_\B^\H$ imply that
$c(\om_{ij}^\op)^* = c(\om_{ji}^\op)$ for each~$i,j$. Then
$$
\Ahat\biggl( \sum_{j=1}^n \xi b_j \,u_j \biggr) 
= \sum_{i,j=1}^n c(\om_{ij}^\op)(\xi) b_i\,u_j.
$$
The scalar product on $\H_\infty^n q$ is given by
\begin{align}
\biggl< \sum_j \xi b_j\,u_j \biggm| \sum_k \eta  b'_k\,u_k \biggr> 
= \sum_j \scal<\xi b_j|\eta b_j'>
= \sum_{j,k} \scal<\xi b_j q_{jk}|\eta b'_k> \,.
\label{eq:scal-prod} 
\end{align}
With these formulae, it is straightforward to check that $\Ahat$ is a
symmetric operator on the Hilbert space $\H^n q$ (the completion of
$\H_\infty^n q$ for this scalar product). Now, ignoring
$\eps \ox \eps'$, $\Ahat$ is just multiplication by the matrix with
bounded entries $c(\om_{ij}^\op)$, which is manifestly bounded; so
$\Ahat$ is a bounded selfadjoint operator on~$\H^n q$.

The Hilbert space $\H \ox_\B \E$ is (by definition) the completion of 
$\H_\infty \ox_\B \E$ in the corresponding scalar product, so that 
$\phi$ extends to a unitary isomorphism from $\H \ox_\B \E$ to
$\H^n q$. To show that $\Dhat$ is a selfadjoint operator on 
$\H \ox_\B \E$, it is thus enough to show that 
$q^\op(\D \ox \eps' 1_n)q^\op$ is a selfadjoint operator on $\H^n q$,
since $\Ahat$ delivers a bounded selfadjoint perturbation of it.

Write $\D_n \equiv \D \ox \eps' 1_n$. Observe that $q^\op \D_n q^\op$
is symmetric on the dense domain
$$
\Dom(q^\op \D_n q^\op) 
= \set{\xi \in \Dom \D_n \subset \H^n : \xi q = \xi}.
$$
The domain of the adjoint $\Dom((q^\op \D_n q^\op)^*)$ on $\H^n q$
consists of all $\xi \in \H^n q$ such that for all
$\eta \in \Dom(q^\op \D_n q^\op)$ there is some $\zeta \in \H^n q$ for
which $\scal< q^\op \D_n q^\op \eta | \xi> = \scal<\eta | \zeta>$.
However, we see that
$$
\scal< q^\op \D_n q^\op \eta | \xi>
= \scal< \D_n q^\op \eta | q^\op \xi> = \scal<\D_n\eta | \xi> \,.
$$
Therefore
$$
\Dom((q^\op \D_n q^\op)^*) 
= \set{\xi \in \Dom \D_n^* \subset \H^n : \xi q = \xi}
= \Dom(q^\op \D_n q^\op) 
$$
since $\D_n$ is selfadjoint on~$\H^n$. Thus $q^\op \D_n q^\op$ is
selfadjoint.
\end{proof}

We now come to an important point: the passage from $\D$ to~$\Dhat$ 
does not change the order of summability of the corresponding 
spectral triples.

\begin{prop} 
\label{pr:summa}
If $(\A \ox \B^\op, \H, \D)$ is $\I$-summable, then so also is
$(\A, \H \ox_\B \E, \Dhat)$, for either of the ideals $\I = \L^s$,
$s \geq 1$; or $\I = \Zz_p$, $p \geq 1$.
\end{prop}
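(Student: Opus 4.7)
The plan is first to use the unitary $\phi$ of Lemma~\ref{lm:big-dee} to replace $\Dhat$ by $\D' := q^\op \D_n q^\op + \Ahat$ on $\H^n q$, where $\D_n := \D \ox \eps' 1_n$ is selfadjoint on $\H^n$ with $\Dreg_n^{-1} = \Dreg^{-1} \ox 1_n$ (an $n$-fold amplification of $\Dreg^{-1}$, hence in $\I$) and $\Ahat$ is bounded selfadjoint. A standard bounded-perturbation argument then reduces the claim to showing $(1+D_0^2)^{-1/2} \in \I$ for $D_0 := q^\op \D_n q^\op$ on $\H^n q$: by Kato--Rellich, $\Ahat$ preserves $\Dom D_0 = \Dom \D'$; the operator $(1+\D'^2)^{-1/2}(1+D_0^2)^{1/2}$ is bounded (a routine computation using that $D_0(1+D_0^2)^{-1/2}$ is bounded), so the two regularised inverses lie in the same symmetric ideal.

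To handle the compression $D_0 = P\D_n P$ with $P := q^\op$, I would use the identity $P\D_n^2 P = D_0^2 + B^* B$ on the natural domain, where $B := (1-P)\D_n P = (1-P)[\D_n, P]$. The decisive point is that $B$ is \emph{bounded}: since $q \in M_n(\B)$ gives $q^\op \in M_n(\B^\op)$, the commutator $[\D_n, q^\op]$ is bounded by the spectral triple hypothesis on $(\A \ox \B^\op, \H, \D)$. Therefore $D_0^2 \geq P\D_n^2 P - \|B\|^2 \cdot 1$ as quadratic forms.

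A two-step min-max argument then controls the eigenvalues. First, since $k$-dimensional subspaces $V \subset P\H^n$ form a subfamily of the $k$-dimensional subspaces of $\H^n$, the variational characterisation $\lambda_k(P\D_n^2 P|_{P\H^n}) = \min_{V \subset P\H^n, \dim V = k} \max_{\xi \in V, \|\xi\|=1} \|\D_n \xi\|^2$ is bounded below by $\lambda_k(\D_n^2|_{\H^n})$. Second, Weyl's inequality applied to the perturbation $-B^* B$ gives $\lambda_k(D_0^2) \geq \lambda_k(P\D_n^2 P) - \|B\|^2$; combining, $\lambda_k(D_0^2) \geq \lambda_k(\D_n^2) - \|B\|^2$. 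Since $\lambda_k(\D_n^2) \to \infty$, one deduces $\mu_k((1+D_0^2)^{-1/2}) \leq \sqrt 2\, \mu_k(\Dreg_n^{-1})$ for $k$ sufficiently large. Because $\L^s$ and $\Zz_p$ are characterised by singular-value decay and closed under such majorisation modulo finitely many terms, ideal membership of $(1+D_0^2)^{-1/2}$ follows. The main obstacle is precisely this comparison between the compression $D_0$ on the submodule $\H^n q$ and the ambient $\D_n$ on $\H^n$: it genuinely requires the first-order/bounded-commutator condition for $q^\op \in M_n(\B^\op)$, without which the compression could destroy summability.
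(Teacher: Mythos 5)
Your proposal is correct, and it starts from the same reduction as the paper (via the unitary $\phi$ of Lemma~\ref{lm:big-dee}, replacing $\Dhat$ by $D_0 + \Ahat$ on $\H^n q$ with $D_0 = q^\op\D_n q^\op$), but both key steps are handled by genuinely different means. For the compression, the paper computes the identity $(q^\op\D_n q^\op)^2 = q^\op\D_n^2 q^\op + q^\op[\D_n,q^\op]\,[\D_n,q^\op]$ and then manipulates $\reg{q^\op\D_n q^\op}^{-2}$ through a resolvent-type expansion, deducing $\reg{q^\op\D_n q^\op}^{-1}\in\I$ from membership of its square in $\I^2$ (a step that uses the specific structure of $\L^s$ and $\Zz_p$); you use the same algebraic fact in the equivalent form $\|D_0\xi\|^2 = \|\D_n\xi\|^2 - \|B\xi\|^2$ with $B = (1-q^\op)[\D_n,q^\op]$ bounded, and then min--max plus Weyl's inequality to obtain the eventual singular-value domination $\mu_k\bigl((1+D_0^2)^{-1/2}\bigr) \leq \sqrt{2}\,\mu_k\bigl(\reg{\D_n}^{-1}\bigr)$, which gives ideal membership for \emph{any} symmetric ideal and makes the preservation of the spectral dimension (used in Theorem~\ref{th:nice-product}) completely transparent. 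For the bounded perturbation $\Ahat$, the paper uses the resolvent identity \eqref{eq:inverse-perturbed}, whereas you use equality of domains and a closed-graph/quadratic-form estimate to show that $(1+(D_0+\Ahat)^2)^{-1/2}(1+D_0^2)^{1/2}$ extends to a bounded operator; both are routine and both rest, as you correctly flag, on the boundedness of $[\D_n,q^\op]$ coming from the $\B^\op$-part of the spectral triple hypothesis. Two cosmetic points: the invocation of Kato--Rellich and the phrase ``$\Ahat$ preserves $\Dom D_0$'' are not what you actually need (only $\Dom(D_0+\Ahat)=\Dom D_0$, automatic for a bounded perturbation); and the intermediate object $P\D_n^2P$ need not be realised as a selfadjoint operator, since the pointwise inequality $\|D_0\xi\|^2 \geq \|\D_n\xi\|^2 - \|B\|^2\|\xi\|^2$ on $\Dom D_0$ feeds directly into the variational characterisation.
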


\begin{proof}
First consider the summability of the spectral triple
$(\A, \H^n q, q^\op \D_n q^\op)$. Note that
\begin{align*}
(q^\op \D_n q^\op)^2
&= q^\op \D_n q^\op \D_n q^\op
= q^\op [\D_n, q^\op] \D_n q^\op + q^\op \D_n^2 q^\op
\\
&= q^\op [\D_n, q^\op][\D_n, q^\op] + q^\op [\D_n, q^\op] q^\op \D_n
+ q^\op \D_n^2 q^\op
= q^\op [\D_n, q^\op]\,[\D_n, q^\op] + q^\op \D_n^2 q^\op,
\end{align*}
because $p\,\dl(p)p = 0$ for any projector~$p$ and any derivation
$\dl$ with $p \in \Dom\dl$.

Since $q^\op$ acts as the identity operator on $\H^n q$, we find that
\begin{equation}
\reg{q^\op \D_n q^\op}^{-1} = (q^\op + (q^\op \D_n q^\op)^2)^{-1/2}
= q^\op \bigl( 1 + \D_n^2 + q^\op[\D_n, q^\op][\D_n, q^\op]
\bigr)^{-1/2} q^\op.
\label{eq:squash-reg-inverse} 
\end{equation}

So suppose that $\Dreg^{-1} = (1 + \D^2)^{-1/2}$ is contained in the
symmetric ideal $\I = \I(\H)$ where, say, $\I = \L^s$ or~$\Zz_p$. Then
$\reg{\D_n}^{-1} = (1 + \D_n^2)^{-1/2}$ lies in $\I(\H^n)$, and so 
$q^\op (1 + \D_n^2)^{-1/2} q^\op$ lies in $\I(\H^n q)$.

{}From \eqref{eq:squash-reg-inverse} it follows that
\begin{align*}
\reg{q^\op \D_n q^\op}^{-2} 
&= (q^\op + (q^\op \D_n q^\op)^2)^{-1}
= \bigl( q^\op(1 + \D_n^2)q^\op
+ q^\op[\D_n, q^\op][\D_n, q^\op] \bigr)^{-1}
\\
&= q^\op (1 + \D_n^2)^{-1} q^\op
- q^\op (1 + \D_n^2)^{-1} q^\op[\D_n, q^\op]\,[\D_n, q^\op]\, \bigl(
q^\op + (q^\op \D_n q^\op)^2 \bigr)^{-1}.
\end{align*}
This shows that if $q^\op(1 + \D_n^2)^{-1} q^\op \in \I^2$ then
$(q^\op + (q^\op \D_n q^\op)^2)^{-1} \in \I^2$, too. For 
$\I = \L^s$ or $\I = \Zz_p$, this then implies that
$\reg{q^\op \D_n q^\op}^{-1} \in \I$, as desired.

To finish, in view of Lemma~\ref{lm:big-dee}, we only need to show 
that if $A$ is a bounded selfadjoint operator on~$\H$ and 
$(1 + \D^2)^{-1/2} \in \I(\H)$, then
$(1 + (\D + A)^2)^{-1/2} \in \I(\H)$, too. Observe that 
$$
(i + \D)^{-1} = (1 + \D^2)^{-1/2} (1 + \D^2)^{1/2} (i + \D)^{-1},
$$
and since $(1 + \D^2)^{1/2} (i + \D)^{-1}$ is unitary, we can start 
from $(i + \D)^{-1} \in \I(\H)$. Using the identity
\begin{equation}
(i + \D + A)^{-1} = (i + \D)^{-1} - (i + \D + A)^{-1} A(i + \D)^{-1},
\label{eq:inverse-perturbed} 
\end{equation}
we conclude that $(i + \D + A)^{-1} \in \I(\H)$ and thus
$(1 + (\D + A)^2)^{-1/2} \in \I(\H)$.
\end{proof}

Having now constructed $\Dhat$, one could expect that since $\A$ and 
$\Cc$ have commuting actions on $\H \ox_\B \E$, one could produce a 
spectral triple over the algebra $\A \ox \Cc^\op$. In order to obtain 
it, we need finite projectivity under the left action of~$\A$.

\begin{thm} 
\label{th:nice-product}
Let the even spectral triple $(\A \ox \B^\op, \H, \D)$ be $QC^\infty$
for~$\A$, $\Z_2$-graded for the $\Z_2$-graded algebra $\B^\op$, and
$\I$-summable; and let $\E$ be a $\B$-$\Cc$-bimodule, finitely
generated and projective over~$\B$, and $\Z_2$-graded for~$\B$. Then
the associated spectral triple $(\A, \H \ox_\B \E, \Dhat)$ is
$QC^\infty$ and $\I$-summable, and has the same spectral dimension.

Moreover, if $\H_\infty = \Domoo \D$ is finitely generated and
projective as a left $\A$-module and if $\H = L^2(\H_\infty,\psi)$
for some positive linear functional $\psi$ on~$\A$, then
$(\A \ox \Cc^\op, \H \ox_\B\E, \Dhat)$ is a spectral triple satisfying
the first order condition, and it is $QC^\infty$ for the action
of~$\A$.
\end{thm}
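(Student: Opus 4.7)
The plan is to verify the spectral triple axioms in two phases, paralleling the two assertions of the theorem.

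\textbf{Phase 1 (spectral triple for $\A$).} Self-adjointness of $\Dhat$ and $\I$-summability at the same spectral dimension are delivered directly by Lemma~\ref{lm:big-dee} and Proposition~\ref{pr:summa}. For the commutator bound, I use the identification $\phi\Dhat\phi^{-1} = q^\op(\D \ox \eps' 1_n) q^\op + \Ahat$ on $\H^n q$ together with the fact that the representations of $\A$ and $\B^\op$ commute on $\H$, so $a \in \A$ commutes with $q^\op \in M_n(\B)^\op$; hence
\[
\phi\,[\Dhat, a]\,\phi^{-1} = q^\op\bigl([\D,a] \ox \eps' 1_n\bigr)q^\op + [\Ahat, a],
\]
which is bounded by Lemma~\ref{lm:big-dee} and the hypothesis on~$\D$. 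The $QC^\infty$ property for~$\A$ propagates to the derivation $[\reg{\Dhat},\cdot]$ by iterating a resolvent identity analogous to~\eqref{eq:inverse-perturbed}, using that $\Ahat$ is a bounded self-adjoint perturbation of $q^\op \D_n q^\op$.

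\textbf{Phase 2 (extension to $\A \ox \Cc^\op$).} The actions of $\A$ and $\Cc^\op$ on $\H \ox_\B \E$ commute because they act on different tensor factors, with $\Cc^\op$ acting boundedly since the right $\Cc$-action on $\E$ is adjointable. A direct computation using $\Dhat = (\eps \ox \eps')(\ga \ox 1_\E)\,\nb_\B + \eps T \ox \eps'$ yields
\[
[\Dhat,\, a \ox 1_\E] = \eps\bigl(\ga \circ [\nb_\B^\H, a] + [T, a]\bigr) \ox \eps',
\]
where I use the left $\A$-linearity of~$\ga$ (a consequence of the graded first-order condition for the original triple, verified by computing $[a, [\eps\D, b^\op]]$ separately in the even and odd sectors of~$\B$) together with right $\B$-linearity of $[\nb_\B^\H, a]$ and $[T, a]$ (Lemma~\ref{lm:extra-linear} and $\A$-$\B^\op$-commutativity). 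Since $\eps'$ commutes with $\Cc$, the right-hand side commutes with every $1 \ox c^\op$, which is the first-order condition $[[\Dhat, a], c^\op] = 0$. An analogous computation gives
\[
[\Dhat,\, 1 \ox c^\op] = (\eps \ox \eps')(\ga \ox 1_\E)\bigl(1_{\H_\infty} \ox [\nb_\B^\E, c^\op]\bigr),
\]
and $[\nb_\B^\E, c^\op]$ is left $\B$-linear by Lemma~\ref{lm:extra-linear}. Boundedness now follows either by the matrix argument at the end of Lemma~\ref{lm:big-dee} (this operator has exactly the structural form of $\Ahat$), or more economically by invoking Proposition~\ref{pr:lin-bdd}: the fgp-over-$\A$ hypothesis on $\H_\infty$ combined with $\Domoo\Dhat \isom (\Domoo \D)^n q$ identifies the smooth domain as a finitely generated projective left $\A$-module, and $[\Dhat, c^\op]$ is already left $\A$-linear by the commutation noted above. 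The $QC^\infty$ property for~$\A$ is then inherited from Phase~1.

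The main technical obstacle is the $QC^\infty$ verification itself: propagating iterated bounds from $[\reg\D,\cdot]$ to $[\reg\Dhat,\cdot]$ requires careful handling of both the projector $q^\op$ and the bounded perturbation $\Ahat$, which is delicate since the original triple is only assumed $QC^\infty$ for~$\A$ and not for~$\B^\op$. Secondary care is required with the graded-sign bookkeeping, particularly in establishing left $\A$-linearity of~$\ga$ when $\B$ carries a nontrivial $\Z_2$-grading.
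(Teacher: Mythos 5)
Most of your outline tracks the paper's own proof quite closely: selfadjointness of $\Dhat$ and preservation of $\I$-summability and spectral dimension are taken from Lemma~\ref{lm:big-dee} and Proposition~\ref{pr:summa}; your commutator computation in the $\H^n q$ picture is equivalent to the paper's identity $[\Dhat,a]=[\D,a]\ox\eps'$; and your Phase~2 is essentially the paper's argument: the first order condition holds because $[\Dhat,a]=[\D,a]\ox\eps'$ commutes with the right $\Cc$-action (which commutes with $\eps'$), and $[\Dhat,c^\op]$ is bounded because $[[\Dhat,c^\op],a]=[[\Dhat,a],c^\op]=0$ makes it left $\A$-linear on $\H_\infty\ox_\B\E\isom\H_\infty^n q$, which is finitely generated projective over~$\A$ with $\H^n q=L^2(\H_\infty^n q,\psi)$, so (the right-to-left variant of) Proposition~\ref{pr:lin-bdd} applies.

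The genuine gap is the $QC^\infty$ claim in Phase~1, which you yourself flag as ``the main technical obstacle'' without supplying the mechanism. Saying that regularity ``propagates by iterating a resolvent identity analogous to \eqref{eq:inverse-perturbed}, using that $\Ahat$ is a bounded self-adjoint perturbation'' does not work as stated: identity \eqref{eq:inverse-perturbed} controls membership of resolvents in operator ideals (i.e.\ summability), not boundedness of iterated commutators with $\Dhatreg$, and the $QC^\infty$ property is \emph{not} stable under arbitrary bounded selfadjoint perturbations --- one would need $\Ahat$ itself to lie in the smooth domain of the relevant derivation, which is precisely what is unavailable here, since $\Ahat$ is built from the operators $c(\om_{ij}^\op)$ coming from the $\B^\op$-side and the original triple is only assumed $QC^\infty$ for~$\A$. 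The missing idea is the paper's: work on $\H^n q$, use that $q^\op$ commutes with the left $\A$-action and that $[\Dhat,a]=[\D,a]\ox\eps'$, and verify regularity through the characterization of $\Domoo\bigl[\Dhatreg,\cdot\bigr]$ in terms of commutators with $\Dhat^2$ damped by the resolvent, as in \cite[Lemma~13.2]{ConnesRecon}. For instance,
\[
\bigl(q^\op + q^\op(\D^2\ox 1_n)q^\op\bigr)^{-1/2}\bigl[q^\op(\D^2\ox 1_n)q^\op,\,[\D,a]\ox\eps'\bigr]
= q^\op\bigl((1+\D^2)^{-1/2}\,[\D^2,[\D,a]]\ox\eps'\bigr)q^\op,
\]
so the required bounds reduce to iterated $[\D^2,\cdot]$-commutators of $a$ and $[\D,a]$, which are controlled by the $QC^\infty$ property of $(\A,\H,\D)$ for~$\A$ alone; the bounded term $\Ahat$ and the projector $q^\op$ then enter only through such damped commutators, never through $|\Dhat|$ directly. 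Without some version of this $\D^2$-based criterion, your Phase~1 regularity claim (and hence the $QC^\infty$ assertion inherited in Phase~2) does not go through.
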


\begin{proof}
The left actions of $\A$ on $\H$ and $\H \ox_\B \E$ satisfy
\begin{equation}
[\Dhat, a] = (\eps\ox\eps')\,(\ga \ox 1_\E)\,[\nb_\B, a]
= \eps\ga\,[\nb_\B^\H, a] \ox \eps' = [\D, a] \ox \eps',
\label{eq:si-beag-si-mor} 
\end{equation}
since $\ga$ is left $\A$-linear: recall that we have assumed that the
spectral triple $(\A \ox \B^\op, \H, \D)$ satisfies the (graded) first
order condition. Thus the commutators $[\Dhat, a]$ are bounded and
generate a representation of $\CDA$ on $\H \ox_\B \E$. The right
action of $\Cc$ on~$\E$ extends in the obvious way to a representation
of $\Cc^\op$ on $\H \ox_\B \E$ commuting with this left action
of~$\CDA$.

We can identify $\H \ox_\B \E$ with $\H^n q$ as before ---omitting the
explicit isomorphism $\phi$ from the notation--- so that $\Dhat^2$ is
a bounded perturbation of $q^\op(\D^2 \ox 1_n)q^\op$. Since $q^\op$
commutes with the left action of $\A$, it follows that $a$ and 
$[\Dhat, a]$ lie in $\Domoo\bigl[ |\Dhat|, \cdot \bigr]$ so that 
$(\A, \H \ox_\B \E, \Dhat)$ is indeed $QC^\infty$.

For example, a short calculation shows that
$$
\bigl (q^\op + q^\op (\D^2 \ox 1_n) q^\op \bigr)^{-1/2}
\bigl[ q^\op (\D^2 \ox 1_n) q^\op, [\D,a] \ox \eps' \bigr]
= q^\op\bigl( (1 + \D^2)^{-1/2} [\D^2,[\D,a]] \ox \eps' \bigr) q^\op.
$$
The right hand side is bounded by the $QC^\infty$ property of
$(\A,\H,\D)$; and the consequent boundedness of the left hand side 
yields the $QC^\infty$ property for the other spectral triple: see, 
for instance, \cite[Lemma~13.2]{ConnesRecon}.

The $\I$-summability of the associated spectral triple has already 
been established by Proposition~\ref{pr:summa}.

If $\H_\infty$ is finitely generated and projective as a left
$\A$-module, then so too is $\H_\infty \ox_\B \E \isom \H_\infty^n q$,
which is a direct summand of $\H_\infty^n$. The boundedness of
$[\Dhat, c^\op]$, for $c \in \Cc$, follows since the relation
$$
[[\Dhat, c^\op], a] = [[\Dhat, a], c^\op] = 0, \word{for all} a \in \A
$$
shows that $[\Dhat, c^\op]$, which maps $\H_\infty \ox_\B \E$ to 
itself, is left $\A$-linear. It is easy to check that 
$\H = L^2(\H_\infty,\psi)$ implies that
$\H^n q = L^2(\H_\infty^n q,\psi)$; thus Proposition~\ref{pr:lin-bdd}
is applicable,%
\footnote{Or rather, we need the obvious right-to-left variant of that
Proposition.}
and establishes the boundedness of $[\Dhat, c^\op]$.
\end{proof}

\begin{eg} 
\label{eg:twisted-bundle}
Let $E \to M$ be a Clifford bundle as in Example~\ref{eg:cliff-bundle}
and, given a Clifford connection $\nb^E$ on~$E$, let
$\bigl( \Coo(M) \ox \Coo(M), L^2(M,E), \D = c_M \circ \nb^E \bigr)$ be
the associated spectral triple (see Examples \ref{eg:Dirac-bundle},
\ref{eg:Dirac-spec-trip}, \ref{eg:Dirac-first-order} and
Lemma~\ref{lm:alg-dee}). Given another vector bundle $F \to M$, we may
form a connection $\nb^{E\ox F} := \nb^E \ox 1_F + 1_E \ox \nb^F$ on
$E \ox F$ and a Clifford action $c_M \ox 1_F$ on $E \ox F$. This
yields a new ``twisted'' Clifford bundle and so a spectral triple
$\bigl( \Coo(M), L^2(M, E \ox F),
(c_M \ox 1_F) \circ \nb^{E\ox F} \bigr)$. 
\end{eg}

In \cite{Kucerovsky}, Kucerovsky gives sufficient conditions for an
unbounded Kasparov module to represent the Kasparov product of two
other unbounded Kasparov modules. A spectral triple is tantamount to
an unbounded Kasparov module; a finitely generated projective
(bi-)module may also be regarded as another, where the zero operator
takes the place of the operator~$\D$. We use this theory in the next
proposition to show that we are computing Kasparov products when we
twist our spectral triple by such bimodules. The full force of
Kucerovsky's conditions is not needed in this setting, since the
product simplifies considerably if one of the operators is zero.

\begin{prop} 
\label{pr:married-triple}
Assume now that $(\A \ox \B^\op, \H_\infty, \D)$ is an even spectral
triple which is $QC^\infty$ for~$\A$, and that $\H_\infty$ is finitely
generated and projective over both $\A$ and~$\B$. Let $\E$ be a
$\B$-$\Cc$-bimodule, finitely generated and projective over~$\B$, and
$\Z_2$-graded for~$\B$. Then the spectral triple
$(\A \ox \Cc^\op, \H \ox_\B \E, \Dhat)$ represents the Kasparov
product
$[\E] \ox_{B^\op} [(\A \ox \B^\op, \H, \D)] \in KK(A \ox C^\op, \C)$
of the class $[\E] \in KK(C^\op, B^\op)$ and the class
$[(\A \ox \B^\op, \H, \D)] \in KK(A \ox B^\op, \C)$.
\end{prop}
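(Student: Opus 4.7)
The plan is to apply Kucerovsky's theorem \cite{Kucerovsky} characterising when an unbounded Kasparov module represents the Kasparov product of two others. Here the two input modules are the bimodule $\E$, regarded as an unbounded $(\Cc^\op,\B^\op)$-Kasparov module with the \emph{zero} operator (this is legitimate because $\E$ is finitely generated projective over $\B$, so $1_\E$ is already $\B$-compact), and the spectral triple $(\A\ox\B^\op,\H,\D)$, regarded as an unbounded $(A\ox B^\op,\C)$-Kasparov module. The product over $B^\op$ should then produce a class in $KK(A\ox\Cc^\op,\C)$ represented by an unbounded Kasparov module on $\H\ox_\B\E$; Theorem~\ref{th:nice-product} already furnishes the candidate $(\A\ox\Cc^\op,\H\ox_\B\E,\Dhat)$ with bounded commutators and $\I$-summability, so it only remains to verify Kucerovsky's three hypotheses.

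The key simplification is that the ``inner'' operator is zero, so Kucerovsky's positivity/semiboundedness hypothesis collapses: the cross term $[\Dhat,T_eT_e^*]$ that must be semibounded below (modulo bounded) involves no unbounded contribution from the $\E$-side, and one checks directly from the construction that the cross term is in fact \emph{bounded}, hence trivially semibounded. Thus the two substantive hypotheses to check are the domain condition (that a dense subset of $\Dom\Dhat$ maps under $T_e^*$ into $\Dom\D$) and the connection condition (that for $e$ in a dense subset of $\E$, the ``creation'' operator $T_e\:\H\to\H\ox_\B\E$, $\xi\mapsto\xi\ox e$, has a graded commutator with the operator matrix $\bigl(\begin{smallmatrix}\Dhat&0\\0&\D\end{smallmatrix}\bigr)$ on $(\H\ox_\B\E)\oplus\H$ that extends boundedly).

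The domain condition follows readily from Lemma~\ref{lm:big-dee}: under the unitary $\phi$, $\Dhat$ becomes $q^\op(\D\ox\eps'1_n)q^\op$ plus a bounded perturbation, so $\Domoo\Dhat$ corresponds to $q^\op(\Domoo\D)^n$, and $T_e^*$ acts by right $\B$-linear pairing with $e\in\E=\B^nq$, which manifestly preserves the smooth domain. For the connection condition, I would verify directly, for $e\in\E$ of the form $e=u_kq$ (a generator of $\B^nq$), that
\[
\Dhat\,T_e\,\xi = T_{\eps'e}(\D\xi) + \text{(bounded)}\,\xi,
\]
the equality coming from the defining formula $\Dhat=(\eps\ox\eps')(\ga\ox1_\E)\circ\nb_\B+\eps T\ox\eps'$ together with the Leibniz rule $\nb_\B(\xi\ox e)=\nb_\B^\H(\xi)\ox e+\xi\ox\nb_\B^\E(e)$: the first term reconstructs $(\D\ox\eps')(\xi\ox e)$ up to a $T$-correction, while the second is bounded because $\nb_\B^\E(e)\in\Om^1\B\ox_\B\E$ and $\ga$ sends $b_0\,db_1\ox f$ to $[\eps\D,b_1^\op]f\,b_0$, which is bounded by the first-order/bounded-commutator condition for $\D$ on $\B^\op$.

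The main obstacle I expect is the careful book-keeping for the connection condition: one must pin down a dense subset of $\E$ on which the relevant ``$T_e$ intertwines $\D$ with $\Dhat$ modulo bounded'' statement holds, and verify that the adjoint $T_e^*$ has the analogous property, all while tracking the $\Z_2$-gradings $\eps$, $\eps'$ correctly (so that the \emph{graded} commutators, not the ordinary ones, are the relevant objects). Once this is done, Kucerovsky's theorem delivers the desired Kasparov product identification.
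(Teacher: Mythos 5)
Your proposal is correct and follows essentially the same route as the paper: invoke Kucerovsky's theorem with $\E$ carrying the zero operator, observe that this trivialises everything except the connection condition, and verify that condition using the explicit form $\Dhat=(\eps\ox\eps')(\ga\ox1_\E)\circ\nb_\B+\eps\,T\ox\eps'$, the Leibniz rule, and the boundedness of the commutators $[\eps\D,b^\op]$. The only differences are bookkeeping points the paper handles explicitly: it notes that \emph{both} the domain and positivity hypotheses are vacuous because the operator on the $\E$-side is zero, it records the Hilbert-space identification $\E'\ox_{\B^\op}\H\isom\H\ox_\B\E$ intertwining the $\A$- and $\Cc$-actions, and it writes out the $T_e^*$-component of the graded commutator (the part you defer), which yields the same bounded expressions in $[\D,e_i^\op]_\pm$ and $\eps\,T\ox\eps'+\Ahat$.
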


\begin{proof}
In order to handle the product of modules, it should be noted that the
inner product making a hermitian left $\B$-module $\E$ into a
hermitian right $\B^\op$-module $\E'$ is given by
$$
\pairR(e|f)_{\B^\op} := \bigl( \pairL_\B(e|f)^* \bigr)^\op.
$$
The module underlying the Kasparov product is $\E' \ox_{\B^\op} \H$,
but this is cumbersome and hardly intuitive. Unpacking the definitions
of the scalar product for $\E' \ox_{\B^\op} \H$ yields the formula of
\eqref{eq:scal-prod}. This means that there is an isomorphism of
Hilbert spaces from $\E' \ox_{\B^\op} \H$ to $\H \ox_\B \E$ given by
$e \ox_{\B^\op} \xi \mapsto \xi \ox_\B e$, for $e \in \E$,
$\xi \in \H$. It is easy to see that this isomorphism intertwines the
actions of $\A$ and $\Cc$ on these Hilbert spaces. As a result, the
module underlying the Kasparov product is $\H \ox_\B \E \isom \H^n q$.

Having dealt with the underlying module, we are left with checking the
three conditions of \cite[Thm.~13]{Kucerovsky}. Of these three
conditions, the domain compatibility and positivity constraints of
that theorem are both trivial, since the operator making the left
$\B$-module $\E$ a Kasparov module is zero. Thus we are left with
checking the connection condition.

This condition requires that for a dense set of $e$ in (the
$C^*$-completion of) $\E$, with $e$ homogeneous of degree $\del e$
(i.e., even or odd), the graded commutator
$$
\Biggl[ \twobytwo{\Dhat}{0}{0}{\D},
\twobytwo{0}{T_e}{T_e^*}{0} \Biggr]_\pm 
:= \twobytwo{\Dhat}{0}{0}{\D}\, \twobytwo{0}{T_e}{T_e^*}{0} -
(-1)^{\del e} \twobytwo{0}{T_e}{T_e^*}{0}\, \twobytwo{\Dhat}{0}{0}{\D}
$$
should be bounded on $\Dom \Dhat \oplus \Dom \D$, where 
$T_e\: \H_\infty \to \H_\infty \ox_\B \E$ is given by
$T_e(\rho) := \rho \ox e$, with adjoint 
$T_e^*(\xi \ox f) := \xi\,\pairL_\B(f|e)$. 

With $u_1,\dots,u_n$ the standard basis vectors in $\B^n$; and
$\xi \ox f \in \H_\infty \ox_\B \E$; and
$e = \sum_i e_i\,u_i \in \B^n q$ even or odd; and with
$\eta \in \H_\infty$, we compute
\begin{align*}
&\Biggl[ \twobytwo{\Dhat}{0}{0}{\D},
\twobytwo{0}{T_e}{T_e^*}{0} \Biggr]_\pm 
\begin{pmatrix} \xi \ox f \\ \eta \end{pmatrix}
= \twobytwo{\Dhat}{0}{0}{\D}
\begin{pmatrix} \eta \ox e \\ \xi\,\pairL_\B(f|e) \end{pmatrix}
-(-1)^{\del e} \twobytwo{0}{T_e}{T_e^*}{0}
\begin{pmatrix} \Dhat(\xi \ox f) \\ \D \eta \end{pmatrix}
\\[\jot]
&\qquad\qquad= 
\begin{pmatrix} \Dhat(\eta \ox e) - (-1)^{\del e} \D\eta \ox e \\[\jot]
\D\bigl( \xi\,\pairL_\B(f|e) \bigr)
- (-1)^{\del e} T_e^*(\Dhat(\xi \ox f)) \end{pmatrix}
\\[\jot]
&\qquad\qquad=
\begin{pmatrix} \sum_i \D(\eta e_i) \ox u_i
+ (\eps\,T \ox \eps' + \Ahat)(\eta \ox e) - (-1)^{\del e} \D\eta \ox e
\\[\jot]
\D\bigl( \xi\,\pairL_\B(f|e) \bigr)
- (-1)^{\del e} \sum_i \D(\xi f_i) \pairL_\B(u_i|e)
- (-1)^{\del e} T_e^* (\eps\,T \ox \eps' + \Ahat)(\xi \ox f)
\end{pmatrix}
\\[\jot]
&\qquad\qquad=
\begin{pmatrix} \sum_i[\D, e_i^\op]_\pm \,\eta \ox u_i
+ (\eps\,T \ox \eps' + \Ahat) T_e(\eta) \\[\jot]
\sum_i [\D, e_i^\op]_\pm \,\xi f_i
- (-1)^{\del e} T_e^* (\eps\,T \ox \eps' + \Ahat)(\xi \ox f)
\end{pmatrix}
= \begin{pmatrix} R(\eta) \\[\jot] S(\xi \ox f) \end{pmatrix},
\end{align*}
where $R$ and $S$ are bounded operators. Thus the graded commutator is
bounded for each $e \in \B^n q$. Thus the connection condition is
satisfied and therefore $(\A \ox \Cc^\op, \H \ox_\B \E, \Dhat)$
represents the Kasparov product.
\end{proof}

\begin{rmk} 
\label{rk:twisted-bundle}
In the context of Example~\ref{eg:twisted-bundle}, this Proposition
asserts that the $KK$ class of the spectral triple
$\bigl( \Coo(M), L^2(M, E \ox F),
(c_M \ox 1_F) \circ \nb^{E\ox F} \bigr)$ is precisely the Kasparov
product of the classes of $F$ and
$\bigl( \Coo(M) \ox \Coo(M), L^2(M,E), c_M \circ \nb^E \bigr)$.
\end{rmk}

Others have observed the utility of the unbounded version of the
Kasparov product \cite{BoeijinkS,Chachich,Mesland,Zhang}. The key
reason for this utility in each case is the ability to employ
connections to explicitly write down representatives of Kasparov
products.

\addtocontents{toc}{\vspace{-6pt}}

\section{Manifold structures on spectral triples} 
\label{sec:conditions}

In this section, $(\A,\H,\D)$ will always denote a spectral triple
over a trivially graded unital $^*$-algebra $\A$, topologised as a
separable Fr\'echet algebra for which the operator norm on~$\B(\H)$ is
continuous. (In particular, its norm closure $A$ is a separable
$C^*$-algebra.)

\subsection{Spin$^c$ manifolds in NCG} 
\label{ssc:geom-cond}

We start with a discussion of some of the conditions proposed by 
Connes~\cite{ConnesGrav,ConnesRecon} to enable a reconstruction 
theorem for compact spin$^c$ (and spin) manifolds. We follow the 
setup of~\cite{Crux}, to which we refer for more detail on these
conditions.

\begin{cond}[Regularity] 
\label{cn:qc-infty}
The spectral triple $(\A,\H,\D)$ is $QC^\infty$, as set forth in
Definition~\ref{df:qc-infty}. Under our completeness assumption
on~$\A$, topologised by the seminorms~\eqref{eq:dl-snorms}, $\A$ is
then a Fr\'echet pre-$C^*$-algebra.
\end{cond}

\begin{cond}[Dimension] 
\label{cn:spec-dim}
The spectral triple $(\A,\H,\D)$ is $\Zz_p$-summable for a fixed
positive \textit{integer}~$p$. Thus, if $\Trw$ denotes any Dixmier
trace%
\footnote{We may choose and fix a particular Dixmier trace $\Trw$, 
since none of our results depend on its choice.}
corresponding to a Dixmier limit~$\om$, the linear functional
$\psi_\om(a) := \Trw(a\,\Dreg^{-p})$ is defined (and positive) 
on~$\A$. Indeed, since every $[\D,a]$ is bounded, the same formula
extends $\psi_\om$ to a positive linear functional on~$\CDA$.
\end{cond}

\begin{cond}[Finiteness and Absolute Continuity] 
\label{cn:finite}
The dense subspace $\H_\infty := \Domoo \D$ of~$\H$ is a finitely
generated projective left $\CDA$-module, and the functional $\psi_\om$
is faithful on~$\CDA$ with $\H = L^2(\H_\infty,\psi_\om)$.
\end{cond}

Keeping always the abbreviation $\Cc \equiv \CDA$, we can therefore
identify $\H_\infty \isom \Cc^m q$ where $q \in M_m(\Cc)$ is a
selfadjoint idempotent. As a left $\Cc$-module, $\H_\infty$ has a set
of generators $\xi_1,\dots,\xi_m$, and the hermitian pairing of
$\xi = \sum_{j=1}^m u_j \xi_j = \sum_{j,k=1}^m u_j q_{jk} \xi_k$ and 
$\eta = \sum_{j=1}^m v_j \xi_j = \sum_{j,k=1}^m v_j q_{jk} \xi_k$ is 
given by $\pairL_\Cc(\xi|\eta) := \sum_{j,k=1}^m u_j q_{jk} v_k^*$.

The identification of $\H$ with $L^2(\H_\infty,\psi_\om)$ means that 
the scalar product on~$\H$ is given by
\begin{equation}
\scal<\eta|\xi> = \psi_\om \bigl( \pairL_\Cc(\xi|\eta) \bigr)
= \Trw \bigl(  \pairL_\Cc(\xi|\eta) \,\Dreg^{-p} \bigr).
\label{eq:abs-cont} 
\end{equation}

\begin{rmk} 
\label{rk:left-to-right} 
The left-to-right switch of the vectors $\eta$ and~$\xi$
in~\eqref{eq:abs-cont} just takes into account that the inner product
$\pairL_\Cc(\cdot|\cdot)$ on the left $\A$-module $\H_\infty$ is
linear in the first variable. Compare the proof of
Proposition~\ref{pr:lin-bdd} above, which deals with right hermitian
modules.
\end{rmk}

\begin{rmk} 
\label{rk:cda-mod}
The reason for asking for $\H_\infty$ to be a finitely generated
projective $\CDA$-module, as opposed to $\A$-module, is to control the
representation of~$\CDA$. Classically, $\CDA$ is the Clifford algebra
determined by some Riemannian metric, and various features of
manifolds can be characterised in terms of the representation of this
algebra. For example, a manifold is spin$^c$ if and only if there is a
bundle (of spinors) providing a Morita equivalence between the
Clifford algebra and the algebra of functions. Later we shall also
look at the representation of the Clifford algebra on the bundle of
exterior forms.
\end{rmk}

\begin{rmk} 
\label{rk:first-order}
By Proposition \ref{pr:first-order}, if $(\A,\H,\D)$ satisfies the
previous three conditions and if $\H_\infty$ is (also) finite
projective over $\A$, we can amplify its algebra to $\A \ox \B$, where
$\B$ is the algebra of bounded operators mapping $\H_\infty$ to
itself, and commuting with the whole algebra $\CDA$ of
Definition~\ref{df:best-algebra} ---and the grading, if there is one.
Then $(\A \ox \B, \H, \D)$ is a spectral triple satisfying the first
order condition, and is $QC^\infty$ for the algebra~$\A$.

It is also $\Zz_p$-summable and the absolute continuity
\eqref{eq:abs-cont} implies that $\Trw(w\,\Dreg^{-p}) > 0$ whenever
$w \in \CDA$ is a nonzero positive operator on~$\H$. This condition
determines $p$ uniquely; we then say that the critical summability
parameter $p$ is the \textit{spectral dimension} of $(\A,\H,\D)$ or of
$(\A \ox \B, \H, \D)$.
\end{rmk}

\begin{cond}[Orientability] 
\label{cn:orient}
The spectral triple $(\A,\H,\D)$, of spectral dimension~$p$, is called
\textit{orientable} if there exists a Hochschild $p$-cycle
\begin{equation}
\cc = \tsum_{\al=1}^n a_\al^0 \ox a_\al^1 \oxyox a_\al^p \in Z_p(\A,\A)
\label{eq:Hoch-cycle} 
\end{equation}
such that the bounded operator
$$
C \equiv \pi_\D(\cc)
:= \tsum_\al a_\al^0 \,[\D,a_\al^1] \dots [\D,a_\al^p]
$$
is invertible and selfadjoint, has square $C^2 = 1$, and
satisfies
$$
C \D - (-1)^{p-1} \D C = 0;  \word{and}
C\,a - a\,C = 0 \word{for} a \in \A.
$$
\end{cond}

\begin{rmk} 
\label{rk:Hoch-cycle}
The Hochschild class $[\cc] \in HH_p(\A)$ may be called the
\textit{orientation} of $(\A,\H,\D)$, and we call $(\A,\H,\D,\cc)$ an
\textit{oriented} spectral triple.

One could ask for an orientation cycle $\cc \in Z_p(\A, \A \ox \B)$,
where the first tensor factors in~\eqref{eq:Hoch-cycle} can be
taken in $\A \ox \B$. This case arises when the algebra and Hilbert
space are finite dimensional (where the ordinary trace replaces the
Dixmier trace and one sets $p = 0$); see, for instance,
\cite{Krajewski,PaschkeS}. We will not treat this more general
definition here.
\end{rmk}

\begin{rmk} 
\label{rk:volume-grading}
When $p$ is even, the selfadjoint unitary operator $C = \pi_\D(\cc)$
commutes with $\A$ and anticommutes with~$\D$; in particular, it
cannot be trivial. Thus $C$ is a $\Z_2$-\textit{grading} operator,
making $(\A,\H,\D)$ an \textit{even} spectral triple. (However, this
need not coincide with some previous defined grading~$\Ga$, if the
spectral triple is already even.)

On the other hand, if $p$ is odd, then $C$ commutes with both $\A$
and~$\D$, and is thus a central element of~$\CDA$.
\end{rmk}

The basic property of the space of smooth spinors on a spin$^c$
manifold~\cite{Plymen} is encoded in the following condition, in 
which the regularity and finiteness conditions are assumed.

\begin{cond}[Spin$^c$] 
\label{cn:pmorita}
The subspace $\H_\infty$ is a \textit{pre-Morita equivalence bimodule}
between $\CDA$ and~$\A$. That is to say: $\H_\infty$ is already a left
module for the given action of $\CDA$ on~$\H$, and there is a
commuting right action of~$\A$ on $\H_\infty$, with hermitian
structures satisfying Definition~\ref{df:Morita-bimod}.
\end{cond}

\begin{rmk} 
\label{rk:commuting-alg}
The spin$^c$ condition essentially fixes the ``commuting algebra''
$\B$ of \eqref{eq:first-order} to be $\A^\op$. Indeed,
Proposition~\ref{pr:pre-Morita} establishes that the $C^*$-completion
of $\B$ is~$A^\op$; and certainly $\A^\op \subseteq \B$. Any remaining
ambiguity concerns smoothness of the ``commuting algebra''.
\end{rmk}

\begin{rmk} 
\label{rk:extra-conds}
Some further properties of spectral triples are listed 
in~\cite{Crux}. We briefly mention them here, though they are not 
essential to our present purposes. The ``first-order condition'' is 
here absorbed by our Proposition~\ref{pr:first-order}. Other possible 
requirements are as follows. 
\vspace{-9pt}
\begin{enumerate}
\item[(a)]
\textit{Closedness}:
Assuming Conditions~\ref{cn:qc-infty}--\ref{cn:orient}, for any
$a_1,\dots,a_p \in \A$ and $b \in \B$, the vanishing relation
$\Tr_\om(\Ga\,[\D,a_1]\dots [\D,a_p]\,b\,\Dreg^{-p}) = 0$ holds.
\item[(b)]
\textit{Connectivity}:
There is an orthogonal family of projectors $p_j \in \A$ such that
$\sum_j p_j = 1$ and $[\D,a] = 0$ for $a \in \A$ if and only if
$a = \sum_j \la_j p_j$ for some scalars~$\la_j \in \C$.
\item[(c)]
\textit{Reality}:
Assuming Conditions~\ref{cn:qc-infty}--\ref{cn:pmorita}, there is an 
antiunitary operator $J \: \H \to \H$ such that
$J a^* J^{-1} = a^\op$ for all $a \in \A$ (i.e., $J (\cdot)^* J^{-1}$ 
exchanges the left and right actions of $a$ by bounded operators 
on~$\H$). Moreover, $J^2 = \pm 1$, $J \D J^{-1} = \pm\D$ and also
$J \Ga J^{-1} = \pm\Ga$ in the even case; these signs depend only on
$p \bmod 8$ and coincide with those of the charge conjugation on spin 
manifolds ---see \cite{ConnesGrav} or \cite[Sect.~9.5]{Polaris}.
\end{enumerate}
\end{rmk}

\begin{defn} 
A \textit{noncommutative oriented spin$^c$ manifold} is an oriented
spectral triple $(\A,\H,\D,\cc)$ satisfying Conditions
\ref{cn:qc-infty} to~\ref{cn:pmorita}. It may be called \textit{spin},
rather than spin$^c$, if it also has the reality property.
\end{defn}

By Remark~\ref{rk:commuting-alg}, a $p$-dimensional noncommutative
oriented spin$^c$ manifold $(\A,\H,\D,\cc)$ defines a spectral triple
$(\A \ox \A^\op, \H, \D)$ which is $p$-dimensional. 

We now want to characterise the inner product on~$\H_\infty$. To do
this we need to assume that the only operators on $\H$ commuting with
both $\A$ and $\D$ are scalars. (This can be weakened by assuming
instead the connectivity condition above; then we can run the
following proof on each connected piece to get a similar statement.)

\begin{prop} 
\label{pr::matr-pairing}
Let $(\A,\H,\D)$ be a spectral triple satisfying Conditions
\ref{cn:qc-infty}, \ref{cn:spec-dim} and~\ref{cn:finite}. Suppose
moreover that $\H_\infty$ is finite projective over~$\A$, so that
$\H_\infty \isom p \A^n$ for some projector $p \in M_n(\A)$. Assume
also that $\A$ commutes with~$p$ and that only scalars commute with
all $a \in \A$ and~$\D$. Then any right Hermitian pairing on
$\H_\infty \isom p \A^n$ satisfying \eqref{eq:abs-cont} is a positive
multiple of the standard one.
\end{prop}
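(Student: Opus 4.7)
My plan is to reduce the pairing to a single positive element $r\in pM_n(\A)p$ via Lemma~\ref{lm:matr-pairing}, and then show that the bounded operator on~$\H$ obtained by extending left multiplication by~$r$ commutes with both~$\A$ and~$\D$, so that the irreducibility hypothesis forces it to be a positive scalar.

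Concretely, Lemma~\ref{lm:matr-pairing} (applied with $q=p$) gives $\pairR(e|f)_\A = e^* r f$ for a unique positive $r\in pM_n(\A)p$ with $pr=rp=r$; the standard pairing is the case $r=p$. Since the entries of~$r$ lie in~$\A$, left matrix multiplication by~$r$ extends to a bounded selfadjoint operator $R\in\B(\H)$. Once I establish $R=\la\cdot 1_\H$, evaluating on the columns $\eta=pu_j$ of~$p$ gives $(R\eta)_i = r_{ij}$ and $(\la\eta)_i = \la p_{ij}$, so $r=\la p$, with $\la>0$ by positivity of~$r$ and nondegeneracy of the pairing.

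That $R$ commutes with~$\A$ is the easier half: the hypothesis $[\A,p]=0$ makes the left $\A$-action on $p\A^n$ componentwise, and compatibility of the scalar product on~$\H$ with the adjoint operation on~$\A$, combined with $\scal<\eta|\xi> = \psi_\om(\eta^* r\xi)$ coming from \eqref{eq:abs-cont} read for the right pairing, yields $\psi_\om(\eta^*[a^*,r]\xi)=0$ for all $a\in\A$ and $\eta,\xi\in p\A^n$. Testing with $\eta=(pu_i)c$ and $\xi=(pu_j)d$ for arbitrary $c,d\in\A$ and invoking faithfulness of $\psi_\om$ on~$\A$ entry-by-entry forces $[a,r_{ij}]=0$ in~$\A$ for every $i,j$.

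The main obstacle is showing $[\D,R]=0$. My plan is to exploit the selfadjointness of~$\D$ on~$\H$, which gives $\psi_\om((\D\eta)^* r\xi)=\psi_\om(\eta^* r\D\xi)$ for $\eta,\xi\in\H_\infty$, together with the cyclic property of the Dixmier trace underlying $\psi_\om(\cdot)=\Trw(\cdot\,\Dreg^{-p})$ and the boundedness of $[\D,r_{ij}]\in\CDA$ furnished by regularity, to rearrange the identity into the form $\psi_\om(\eta^*[\D,r]\xi)=0$. An entry-wise argument identical to the previous paragraph, now relying on faithfulness of $\psi_\om$ on all of~$\CDA$, then yields $[\D,r_{ij}]=0$ for every~$i,j$, so that $[\D,R]=0$ on the core $\H_\infty$ and hence on~$\H$. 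With $R$ commuting with both $\A$ and $\D$, the irreducibility hypothesis concludes $R=\la\cdot 1_\H$ and the argument closes as above. The delicate point is the rearrangement step, where one must carefully interpret $(\D\eta)^*$ in terms of the action of~$\D$ on column vectors and appeal to the tracial behaviour of~$\Trw$ on~$\L^{1,\infty}$ to move $\D$ across~$r$ modulo a bounded commutator.
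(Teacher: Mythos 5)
Your overall architecture is the same as the paper's: reduce the pairing to a positive $r\in pM_n(\A)p$ via Lemma~\ref{lm:matr-pairing}, show that the associated bounded operator $R$ (matrix multiplication by~$r$ on $p\A^n$) commutes with $\A$ and with $\D$, and invoke irreducibility to get $r=\la p$. Your reduction, your proof that $[R,a]=0$ (the paper gets the same conclusion slightly differently, from $\braket{\xi}{a\eta}=\braket{\xi}{r^{-1}ar\eta}$), and your closing identification $r=\la p$ are all fine.

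The gap is in the step $[\D,R]=0$. First, the ``rearrangement'' of the selfadjointness identity $\psi_\om\bigl((\D\eta)^* r\xi\bigr)=\psi_\om\bigl(\eta^* r\,\D\xi\bigr)$ into $\psi_\om\bigl(\eta^*[\D,r]\xi\bigr)=0$, with entrywise commutators $[\D,r_{ij}]$ of $\D$ against the \emph{left} representation of the entries, is not available: the components of $\D\eta$ are not obtained entrywise from those of $\eta$, so pulling $\D$ ``inside'' would require commuting $\D$ past the right-module structure (the generators $pu_j$, equivalently the frame operators $\Theta_{pu_i,pu_j}$ or right multiplications), and none of the hypotheses give bounded, let alone vanishing, commutators there; cyclicity of $\Trw$ cannot substitute for this, since $\D$ is unbounded and acts on the Hilbert-space vector, not as a factor inside the trace. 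Second, even if one had $[\D,r_{ij}]=0$ for every entry, that would not yield $[\D,R]=0$: after your second step the $r_{ij}$ are central in $\A$, so $R=\sum_{i,j}r_{ij}\,\Theta_{pu_i,pu_j}$ (left action of the entries composed with the frame operators for the standard pairing), and the commutators $[\D,\Theta_{pu_i,pu_j}]$ have no reason to vanish---they do not even in the classical case. The paper avoids both problems: using invertibility of $r$ (which comes from fullness, as in Lemma~\ref{lm:smooth-pairing}) it introduces the auxiliary scalar product $\bbraket{\xi}{\eta}:=\braket{r^{-1}\xi}{\eta}$, which is precisely the one induced by the \emph{standard} pairing, and notes that, since $r=r^*$, selfadjointness of $\D$ for the given scalar product says exactly that the form $\bbraket{\D\xi}{R\eta}-\bbraket{R\xi}{\D\eta}$ vanishes on $\H_\infty$, whence $[\D,R]=0$ as an operator statement; no entrywise commutators or trace manipulations enter. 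To repair your argument you would need to replace the rearrangement step by a computation of this kind, carried out directly with the operator $R$ and the two equivalent scalar products.
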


\begin{proof}
{}From Lemma~\ref{lm:matr-pairing}, there is a positive invertible
element $r \in p M_n(\A) p$ such that the given pairing is of the form
\eqref{eq:matr-pairing}, i.e., 
$\pairR(\xi|\eta)_r \equiv \sum_{j,k} a_j^* r_{jk} b_k$ for
$\xi = \sum_j \xi_j a_j$, $\eta = \sum_k \xi_k b_k$ in~$\H_\infty$.
If $a \in \A$, then since $ap = pa$ we get 
$a^*\xi = \sum_j \xi_j a^* a_j$ and
$a\eta = \sum_k \xi_k a b_k \in \H_\infty$. The
formula~\eqref{eq:abs-cont} then implies
$$
\braket{\xi}{a\eta} = \braket{a^*\xi}{\eta}
= \Trw \bigl( \pairR(a^*\xi|\eta)_r \,\Dreg^{-p} \bigr)
= \Trw \bigl( \pairR(\xi|r^{-1} a r\eta)_r \,\Dreg^{-p} \bigr)
= \braket{\xi}{r^{-1} a r\eta}.
$$
Hence $[r,a] = 0$ for all $a \in \A$. 

Now since $\D$ is a selfadjoint operator on~$\H$, we obtain, for
$\xi,\eta \in \H_\infty$:
\begin{align}
0 &= \braket{\D\xi}{\eta} - \braket{\xi}{\D\eta}
= \Trw \bigl( \bigl( \pairR(\D\xi|\eta)_r 
- \pairR(\xi|\D\eta)_r \bigr) \,\Dreg^{-p} \bigr)
\nonumber \\
&= \Trw \bigl( \bigl( \pairR(\D\xi|r\eta)_\A
- \pairR(\xi|r\D\eta)_\A \bigr) \,\Dreg^{-p} \bigr)
=: \bbraket{\D\xi}{r\eta} - \bbraket{r\xi}{\D\eta}
\label{eq:comm-quadform} 
\end{align}
where $\bbraket{\xi}{\eta} := \braket{r^{-1}\xi}{\eta}$ defines a new
Hilbert space scalar product. Since $r^{-1}$ is bounded with bounded
inverse, this new scalar product $\bbraket{\cdot}{\cdot}$ is
topologically equivalent to the old one $\braket{\cdot}{\cdot}$, and
so $\H$ coincides with the completion of $\H_\infty$ with respect to
either scalar product.

Now the right hand side of \eqref{eq:comm-quadform} is the quadratic
form defining the commutator $[\D,r]$ with respect to the scalar
product $\bbraket{\cdot}{\cdot}$. It vanishes on $\H_\infty$ and thus
$[\D,r] = 0$. The irreducibility condition now implies that $r$ is (a
positive multiple of) the identity $p$ in $p M_n(\A) p$, represented
by a scalar operator on~$\H$.
\end{proof}

\begin{prop} 
\label{prop:spin-c-works}
Let $(\A,\H,\D,\cc)$ be a noncommutative oriented spin$^c$ manifold
such that only scalars commute with all $a\in\A$ and $\D$. Then 
$\H_\infty$ is  finite projective as both a left and a right
$\A$-module, and $\CDA$ is finite projective as a left or right
$\A$-module. Moreover, the relations
$$
\scal<\eta|\xi> = \psi_\om \bigl( \pairR(\eta|\xi)_\A \bigr)
= \Trw \bigl(  \pairR(\eta|\xi)_\A \,\Dreg^{-p} \bigr)
$$
hold for all $\xi,\eta\in\H_\infty$. In particular, $\psi_\om$ is
faithful on~$\A^\op$.
\end{prop}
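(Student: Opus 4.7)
The plan is to exploit the pre-Morita bimodule structure via Proposition~\ref{pr:pre-Morita}, and combine it with Condition~\ref{cn:finite} and the trace property of $\psi_\om$ on $\CDA$ to deduce both the inner-product formula and the faithfulness statement.

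Proposition~\ref{pr:pre-Morita} applied to $\H_\infty$ as a pre-Morita bimodule between $\CDA$ and $\A$ yields projectors $q \in M_m(\A)$ and $p \in M_n(\CDA)$ with module isomorphisms $\H_\infty \isom q\A^m$ as a right $\A$-module and $\H_\infty \isom \CDA^n p$ as a left $\CDA$-module, together with algebra isomorphisms $\CDA \isom qM_m(\A)q$ and $\A \isom pM_n(\CDA)p$. The first module iso gives finite projectivity of $\H_\infty$ over $\A$ on the right. The algebra iso $\CDA \isom qM_m(\A)q$ realises $\CDA$ as a corner of the finite free $\A$-module $M_m(\A)$, supplying finite projectivity of $\CDA$ over $\A$ on one side; chaining this with $\H_\infty \isom \CDA^n p$ then yields finite projectivity of $\H_\infty$ over $\A$ on the left.

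For the inner-product identity, under $\H_\infty \isom q\A^m$ the Morita compatibility $\pairL_\CDA(\xi|\eta)\zeta = \xi\,\pairR(\eta|\zeta)_\A$ forces $\pairL_\CDA(\xi|\eta)$ to be the outer product $\xi\eta^* \in qM_m(\A)q \isom \CDA$, while $\pairR(\eta|\xi)_\A = \eta^*\xi \in \A$. Condition~\ref{cn:finite} supplies $\scal<\eta|\xi> = \psi_\om(\pairL_\CDA(\xi|\eta))$, so it suffices to show $\psi_\om(\xi\eta^*) = \psi_\om(\eta^*\xi)$. The key input is that $\psi_\om$ is a trace on $\CDA$: this is a standard consequence of Conditions~\ref{cn:qc-infty} and~\ref{cn:spec-dim}, via the identity $\Trw(w_1w_2\Dreg^{-p}) - \Trw(w_2w_1\Dreg^{-p}) = \Trw(w_2[\Dreg^{-p},w_1])$ together with the fact that $[\Dreg^{-p},w_1]$ lies in an ideal on which $\Trw$ vanishes. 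Being a trace on $qM_m(\A)q$, $\psi_\om$ factors through the partial matrix trace $\tau\: qM_m(\A)q \to \A$, giving $\psi_\om(\xi\eta^*) = \psi_\om(\sum_i a_ib_i^*)$; the trace property on $\A$ then equates this with $\psi_\om(\sum_i b_i^*a_i) = \psi_\om(\pairR(\eta|\xi)_\A)$, and \eqref{eq:abs-cont} delivers the equality with $\scal<\eta|\xi>$.

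For the faithfulness on $\A^\op$, fullness of the right pairing (a consequence of pre-Morita) allows decomposing $1 \in \A$ as a finite sum $\sum_i\pairR(\xi_i|\xi_i)_\A$ via a polarisation/positivity argument; the just-proved formula combined with $\pairR(\xi\cdot a^*|\xi\cdot a^*)_\A = a\,\pairR(\xi|\xi)_\A\,a^*$ then gives $\psi_\om(aa^*) = \sum_i\|\xi_i\cdot a^*\|^2$, strictly positive whenever $a \neq 0$ since the right $\A$-action on $\H$ is faithful. As the new formula parallels \eqref{eq:abs-cont} with $\A^\op$ in place of $\CDA$, this translates into faithfulness of $\psi_\om$ on $\A^\op$. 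The main obstacle throughout is the trace property of $\psi_\om$ on $\CDA$ together with its factorisation through the partial matrix trace; although standard in the spectral-triple literature, this rests on a careful Schatten-ideal-theoretic estimate on commutators $[\Dreg^{-p},w]$.
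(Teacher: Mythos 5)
Your outline does track the paper's strategy (corner presentations from Proposition~\ref{pr:pre-Morita}, finite projectivity by chaining, traciality of $\psi_\om$ plus a partial-trace computation), but there is a genuine gap at the central step, and it shows in the fact that you never use the hypothesis that only scalars commute with all $a\in\A$ and~$\D$. Two hermitian structures live on $\H_\infty$: the left $\CDA$-valued pairing of Condition~\ref{cn:finite}, which is the one known to compute the scalar product through $\psi_\om$, and the pair of pairings supplied by the spin$^c$ condition. Morita compatibility only ties the spin$^c$ left pairing to the spin$^c$ right pairing; it does not ``force'' either to be the standard one in a presentation $\H_\infty\isom q\A^m$, nor to be aligned with the pairing of Condition~\ref{cn:finite}. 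By Lemma~\ref{lm:smooth-pairing}, the spin$^c$ right pairing is of the form $\pairR(e|f)_r$ for some positive invertible $r\in qM_m(\A)q$, and a priori $\psi_\om\bigl(\pairR(\eta|\xi)_r\bigr)$ differs from $\scal<\eta|\xi>$ by this~$r$. This is exactly what Proposition~\ref{pr::matr-pairing} is for: from $[r,a]=0$ and, via the auxiliary scalar product there, $[\D,r]=0$, the irreducibility hypothesis forces $r$ to be a positive scalar. Your proposal skips this entirely, so the asserted identity $\scal<\eta|\xi>=\psi_\om\bigl(\pairR(\eta|\xi)_\A\bigr)$ is not established; without irreducibility it can fail by a non-scalar positive factor (rescale the right pairing by a positive invertible central element).

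A second unjustified step is ``being a trace on $qM_m(\A)q$, $\psi_\om$ factors through the partial matrix trace''. Traciality alone does not give this: already for $\A=\C$ and $q=1$ the matrix trace on $M_m(\C)$ is not its own restriction composed with the partial trace when $\C$ is embedded diagonally (a factor of $m$ appears), and here $\A$ sits inside the corner as left-multiplication operators, in general not as the compressed diagonal. The compatibility of $\psi_\om$ with the partial trace $\tr\:\CDA\to\A$ is essentially equivalent to what is being proved; the paper obtains it from the absolute-continuity condition by identifying $\H\isom pL^2(\A,\psi_\om)^n$ and $\Tr_\H=\Tr_{L^2(\A,\psi_\om)}\circ\tr$ on $\CDA$, after the right pairing has been normalised by Proposition~\ref{pr::matr-pairing}. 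The same care is needed in your finite-projectivity claim for $\CDA$ over $\A$: the relevant $\A$-module structure on $qM_m(\A)q$ is multiplication by the image of $\A\hookto\CDA$, and one needs this to commute with the projector (the paper's explicit point that ``$\A$ commutes with $p$'') before the corner is an $\A$-module direct summand of a finitely generated free module. Your faithfulness argument for $\psi_\om$ on $\A^\op$ is fine in spirit once the displayed identity is in place, with the minor caveat that the positive decomposition $1=\sum_i\pairR(\xi_i|\xi_i)_\A$ needs fullness together with square roots of positive invertibles, available here by the holomorphic functional calculus.
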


\begin{proof}
Let $\H_\infty = \Cc^m q$, as in Condition~\ref{cn:finite}. Then by
Condition~\ref{cn:pmorita} and Proposition~\ref{pr:pre-Morita}, we
also get $\H_\infty \isom p\A^n$ and $\Cc = p M_n(\A) p$. Since
$\A \subset \Cc = \CDA$, $\A$ commutes with the projector~$p$.

That property ensures that the partial trace
$\tr\: \Cc \to \A$, defined on $T = [t_{ij}] \in p M_n(\A) p$ by
$\tr(T) := \sum_{i=1}^n t_{ii}$, is a well-defined operator-valued
weight. It also shows that $\Cc$ is finite projective as a left or
right $\A$-module, because $p M_n(\A) p \subset M_n(\A)$ is a direct
summand \emph{as an $\A$-module} precisely because $p$ commutes with
the action of~$\A$. It now follows immediately that $\H_\infty$ is a
finite projective left module over $\A$, since $\H_\infty = \Cc^m q$
is a direct summand in $\Cc^m$, which by the previous discussion is
a direct summand in $\A^{mn^2}$.

We now observe that by Proposition~\ref{pr::matr-pairing} the left
$\A$-valued inner product on $\H_\infty$ is given on
$\xi = \sum_j \xi_j a_j$ and $\eta = \sum_k \xi_k b_k$ by
$$
\pairR(\xi|\eta)_\A = \la \sum_{j,k} a_i^* p_{ij} b_j,
\word{for some} \la > 0,
$$
and we verify that
\begin{align*}
\Trw\bigl( \pairR(\xi|\eta)_\A\,\Dreg^{-p} \bigr)
&= \la \sum_{ij }\Trw\bigl( a^*_i p_{ij} b_j \Dreg^{-p} \bigr)
= \la \sum_{ij} \Trw\bigl( p_{ij} b_j a_k^* p_{ki} \Dreg^{-p} \bigr)
\\
&= \Trw\bigl( \tr\bigl( \pairL_\Cc(\eta|\xi) \bigr) \Dreg^{-p} \bigr),
\end{align*}
the $\CDA$ inner product being determined by the Morita equivalence
condition. We have used the tracial property of $\psi_\om$, which is 
due to the condition $\Dreg^{-1} \in \Zz_p$ \cite{CareyGRS1}.

As a consequence of this calculation, and of the positivity of~$\tr$,
we see that $\H \isom p L^2(\A,\psi_\om)^n$. In this picture, the
operator trace is precisely
$\Tr_\H = \Tr_{L^2(\A,\psi_\om)} \circ \tr$, at least when
restricted to~$\CDA$. So finally we obtain
\begin{equation*}
\Trw\bigl( \pairR(\xi|\eta)_\A \,\Dreg^{-p} \bigr)
= \Trw\bigl( \pairL_\Cc(\eta|\xi) \,\Dreg^{-p} \bigr)
= \scal<\xi|\eta> .
\tag*{\qedhere}
\end{equation*}
\end{proof}

\subsection{Riemannian manifolds in NCG} 
\label{ssc:basic-riem}

The basic structures which all oriented Riemannian manifolds share are
(a)~the exterior algebra of differential forms, and (b)~the
representation on forms of the Clifford algebra determined by the
metric. Other features such as the Hodge $*$-operator, the
Hodge--de~Rham operator $d + d^*$ and so on, can all be obtained from
these structures. Most importantly, the Clifford algebra is linearly
isomorphic to the algebra of differential forms. Indeed, the
differential forms provide a bimodule for the Clifford algebra. This
bimodule is always $\Z_2$-graded by parity of forms, irrespective of
whether the dimension of the manifold is even or odd. This information
is captured in the following condition.

\begin{cond}[Riemannian] 
\label{cn:Riemann}
The vector space $\H_\infty := \Domoo \D$ of~$\H$ contains a cyclic
and separating vector $\Phi$ for the action of the algebra
$\Cc = \CDA$, in the algebraic sense, that is,
$\H_\infty = \set{w\Phi : w \in \CDA}$; and $w = 0$ in~$\CDA$ if and
only if $w\Phi = 0$ in~$\H_\infty$. In particular, $\H_\infty$ is a
free left $\CDA$-module of rank one. Moreover, there exists a
Hermitian pairing $\pairL_\Cc(\cdot|\cdot)$ on $\H_\infty$ such that%
\footnote{In this subsection, for notational convenience, we write 
$\Cc = \CDA$ in all subscripts.}
\begin{equation}
\scal<\eta|\xi> = \psi_\om \bigl( \pairL_\Cc(\xi|\eta) \bigr)
= \Trw \bigl(  \pairL_\Cc(\xi|\eta) \,\Dreg^{-p} \bigr)
\word{for}  \xi,\eta \in \H_\infty;
\label{eq:abs-cont-bis} 
\end{equation}
and such that $z = \pairL_\Cc(\Phi|\Phi)$ is a strictly positive
central element of~$\CDA$. We demand that $\H_\infty$ be finite
projective as a left $\A$-module. There is also a grading
operator~$\eps$ on~$\H$ such that $\eps\Phi = \Phi$, making
$(\A,\H,\D)$ an even spectral triple. (It follows that
$\eps(\cdot)\eps$ is the parity grading of~$\CDA$.) Without loss of
generality, we assume that $\|\Phi\| = 1$.
\end{cond}

\begin{defn} 
\label{df:NC-Riemann}
A \textit{noncommutative oriented Riemannian manifold} is an oriented
spectral triple with a distinguished vector $(\A,\H,\D,\cc,\Phi)$,
satisfying Conditions~\ref{cn:qc-infty} to~\ref{cn:orient} and
Condition~\ref{cn:Riemann}.
\end{defn}

For instance, in the commutative case of a compact oriented Riemannian
manifold $(M,g)$ without boundary, we take $\A = C^\infty(M)$; $\H$ is
the Hilbert space of square-integrable differential forms of all
degrees; and $\D = d + d^*$ is the Hodge--de~Rham (or Hodge--Dirac)
operator. Here $\H_\infty$ is the space of smooth differential forms
$\Om^\8(M)$, and we take $\Phi$ to be the $0$-form (constant
function) $1$; and $\CDA$ is the complexified Clifford algebra, whose
$C^*$-completion is~$\Cliff(M)$.

\vspace{6pt}

Let us examine some of the implications of Condition~\ref{cn:Riemann}.

\begin{rmk} 
\label{rk:smooth-module}
We have already remarked, using \eqref{eq:smooth-switch}, that any
operator in $\Domoo \dl$ preserves $\H_\infty = \Domoo \D$. Thus,
under the assumption of regularity, Condition~\ref{cn:qc-infty}, all
elements of $\CDA$ map $\H_\infty$ to itself, so $\H_\infty$ is indeed
a left $\CDA$-module. 
\end{rmk}

\begin{rmk} 
\label{rk:fgp}
Unlike the spin$^c$ case, we must insert finite projectivity of
$\H_\infty$ over $\A$ by hand. Omitting this condition would mean that
we do not obtain Kasparov's fundamental class, and we would be much
more limited in the Kasparov products we could take. On the other
hand, it is interesting to speculate whether continuous trace
$C^*$-algebras over compact manifolds give (nonunital)
`Riemannian manifolds' which fail to have this finite projective
property. This is related to the work of \cite{Zhang}.
\end{rmk}

There is a positive linear functional $\sg_\Phi \: \CDA \to \C$ given
by
\begin{equation}
\sg_\Phi(w) := \psi_\om \bigl( \pairL_\Cc(w\Phi|\Phi) \bigr)
= \psi_\om \bigl( w\,\pairL_\Cc(\Phi|\Phi) \bigr) = \psi_\om(wz).
\label{eq:sigma_fn} 
\end{equation}
By equation \eqref{eq:abs-cont-bis} in Condition~\ref{cn:Riemann},
$\sg_\Phi$ is a \textit{vector state}:
$$
\sg_\Phi(w) = \scal<\Phi|w\Phi>,  \word{for all}  w \in \CDA.
$$

\begin{lemma} 
\label{lm:interchange}
Suppose that $(\A,\H,\D)$ is a spectral triple satisfying Conditions
\ref{cn:qc-infty}, \ref{cn:spec-dim} and~\ref{cn:Riemann}. Then
$\psi_\om, \sg_\Phi \: \CDA \to \C$ are faithful traces; and
$\H = L^2(\H_\infty,\psi_\om)$.
\end{lemma}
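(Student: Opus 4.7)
The plan is to deduce everything from three ingredients: traciality of $\psi_\om$ (already built into Condition~\ref{cn:spec-dim}), the separating property of~$\Phi$, and the positivity and centrality of $z = \pairL_\Cc(\Phi|\Phi)$. First, traciality of $\psi_\om$ on $\CDA$ is immediate from the assumption that $\Dreg^{-1} \in \Zz_p$: the formula $\psi_\om(w) = \Trw(w\,\Dreg^{-p})$ defines a tracial state on bounded operators, as shown in~\cite{CareyGRS1}. So it only remains to prove faithfulness of $\psi_\om$, and that both properties pass to $\sg_\Phi$.

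Next I would handle $\sg_\Phi$ directly. Substituting $\xi = w\Phi$ and $\eta = \Phi$ in \eqref{eq:abs-cont-bis} gives $\sg_\Phi(w) = \scal<\Phi|w\Phi>$, so $\sg_\Phi$ is a vector state and in particular positive. For $w \in \CDA$, the chain
\begin{equation*}
\sg_\Phi(w^*w) = \|w\Phi\|^2 = 0 \iff w\Phi = 0 \iff w = 0
\end{equation*}
uses exactly the \emph{separating} property of $\Phi$, and establishes faithfulness of $\sg_\Phi$. Traciality of $\sg_\Phi$ is then a one-line consequence of the definition $\sg_\Phi(w) = \psi_\om(wz)$, the centrality of $z$ in $\CDA$, and the traciality of $\psi_\om$.

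To deduce faithfulness of $\psi_\om$, I would use that $z$ is strictly positive, hence $0 < z \leq \|z\|\cdot 1$ as bounded operators on~$\H$. Given $v \geq 0$ in $\CDA$ with $\psi_\om(v) = 0$, the operator inequality $z^{1/2} v z^{1/2} \leq \|z\|\,v$ combined with positivity and traciality of $\psi_\om$ yields
\begin{equation*}
0 \leq \sg_\Phi(v) = \psi_\om(vz) = \psi_\om(z^{1/2} v z^{1/2}) \leq \|z\|\,\psi_\om(v) = 0,
\end{equation*}
so $\sg_\Phi(v) = 0$ and hence $v = 0$ by the faithfulness already shown. Finally, $\H = L^2(\H_\infty,\psi_\om)$ is read off Definition~\ref{df:ltwo-space} (in its left-module guise, cf.~Remark~\ref{rk:left-to-right}): the subspace $\H_\infty = \Domoo\D$ is dense in~$\H$ by spectral theory, $\psi_\om$ is now a faithful positive functional on~$\CDA$, and \eqref{eq:abs-cont-bis} states exactly that the Hilbert space inner product of~$\H$ agrees with the one defining $L^2(\H_\infty,\psi_\om)$.

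The main obstacle is the faithfulness step for $\psi_\om$, because nothing in Condition~\ref{cn:Riemann} directly gives a separating vector for $\psi_\om$ itself; the trick is to pass through $\sg_\Phi$ and exploit that $z$ is invertible in the ambient $C^*$-algebra so that $\sg_\Phi$ dominates a positive multiple of $\psi_\om$ on positive elements. One should also verify quickly that $z^{1/2} v z^{1/2}$ stays within the class of operators to which the Dixmier-trace formula for $\psi_\om$ applies, which is automatic since $\Dreg^{-p}$ lies in the Dixmier ideal and $z^{1/2} v z^{1/2}$ is bounded.
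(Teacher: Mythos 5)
Your proof is correct and follows essentially the same route as the paper's: traciality of $\psi_\om$ on $\CDA$ from the $\Zz_p$-summability references, faithfulness of the vector state $\sg_\Phi$ from the separating property of~$\Phi$, traciality of $\sg_\Phi$ from the centrality of $z$, the domination $\sg_\Phi(v) \leq \|z\|\,\psi_\om(v)$ on positive elements to transfer faithfulness to $\psi_\om$, and then $\H = L^2(\H_\infty,\psi_\om)$ directly from Definition~\ref{df:ltwo-space} and \eqref{eq:abs-cont-bis}. The only cosmetic difference is that the paper sandwiches with $v^{1/2}$, writing $\psi_\om(vz)=\psi_\om(v^{1/2}zv^{1/2}) \leq \|z\|\,\psi_\om(v)$ from $z \leq \|z\|$, whereas your stated inequality $z^{1/2}vz^{1/2} \leq \|z\|\,v$ is valid here only because $z$ is central (it fails for general noncommuting positive operators), so that appeal to centrality should be made explicit at that step.
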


\begin{proof}
We have already remarked that $\psi_\om$ is a trace on~$\Cc = \CDA$, 
in view of the tracial properties of $\Zz_p$-summable $QC^\infty$ spectral
triples \cite{CareyGRS1,CareyRSS,CiprianiGS}.

The central element $z = \pairL_\Cc(\Phi|\Phi)$ of $\Cc$ is positive,
with $\psi_\om(z) = \|\Phi\|^2 = 1$. From~\eqref{eq:sigma_fn},
$$
\sg_\Phi(uw) = \psi_\om(uwz) = \psi_\om(wzu) = \psi_\om(wuz)
= \sg_\Phi(wu) \word{for all}  u,w \in \Cc,
$$
so that $\sg_\Phi$ is a trace, too.

Let $w \in \Cc$. Since
$\sg_\Phi(w^*w) = \scal<\Phi|w^*w\Phi> = \|w\Phi\|^2$, then
$\sg_\Phi(w^*w) = 0$ if and only if $w\Phi = 0$. This implies $w = 0$,
since $\Phi$ is separating for~$\Cc$. Hence $\sg_\Phi$ is faithful.

We recall that if $a,b,c \in B(\H)$ with $a \leq b$, then
$c^*ac \leq c^*bc$. Now notice, for $0 < w \in \Cc$, that
$$
\sg_\Phi(w) = \psi_\om(wz) = \psi_\om(w^{1/2} z w^{1/2}) \leq
\psi_\om(w^{1/2} \|z\| w^{1/2}) = \|z\| \,\psi_\om(w),
$$
since $\psi_\om$ is positive. The faithfulness of~$\sg_\Phi$ shows 
that $0 < \sg_\Phi(w) \leq \|z\| \,\psi_\om(w)$ for each positive
$w \in \Cc$ with $w \neq 0$. Hence $\psi_\om$ is faithful, too.

The equality $\H = L^2(\H_\infty,\psi_\om)$ now follows from
Definition~\ref{df:ltwo-space}, since $\H_\infty$ is a free rank-one
$\Cc$-module and $\psi_\om$ is faithful on~$\Cc$.
\end{proof}

\begin{rmk} 
\label{rk:high-fidelity}
In Condition~\ref{cn:finite}, the functional $\psi_\om$ is required 
to be faithful. The previous lemma shows that this requirement is 
redundant in the presence of Condition~\ref{cn:Riemann}.
\end{rmk}

It is immediate that $\sg_\Phi: \CDA \to \C$ has a normal extension
$\tilde\sg_\Phi$ to the double commutant $\CDA''$ given by
$$
\tilde\sg_\Phi(T) := \scal<\Phi|T\Phi> \word{for all} T \in \CDA'' .
$$

\begin{lemma} 
\label{lm:extension}
Suppose that $(\A,\H,\D)$ is a spectral triple satisfying Conditions
\ref{cn:qc-infty}, \ref{cn:spec-dim} and~\ref{cn:Riemann}.
\begin{enumerate}
\item[\textup{(a)}]
The vector $\Phi \in \H$ is cyclic and separating for the double 
commutant $\CDA''$.
\item[\textup{(b)}]
The functional $\tilde\sg_\Phi$ is a faithful normal finite trace
on~$\CDA''$.
\end{enumerate}
\end{lemma}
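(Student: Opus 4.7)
The plan is to exploit the fact, established in Lemma~\ref{lm:interchange}, that $\sg_\Phi$ is a faithful trace on $\Cc = \CDA$ and that $\Phi$ is separating for~$\Cc$. This is the classical setup for the Tomita conjugation associated to a tracial vector state, and it will deliver both parts of the lemma.

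For part~(a), cyclicity of $\Phi$ for $\Cc''$ is immediate, since $\Cc''\Phi \supseteq \Cc\Phi = \H_\infty$ is dense in~$\H$. For the separating property I would construct an antiunitary involution $J$ on~$\H$ as follows. Using the trace identity, $\|w\Phi\|^2 = \sg_\Phi(w^*w) = \sg_\Phi(ww^*) = \|w^*\Phi\|^2$ for every $w \in \Cc$, so the antilinear map $J_0 \: w\Phi \mapsto w^*\Phi$ is a well-defined isometry on $\H_\infty$ (well-definedness uses the separating property of $\Phi$ on~$\Cc$). It extends uniquely to an antiunitary involution $J \: \H \to \H$ fixing~$\Phi$. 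A direct computation on $\H_\infty$ shows that $JwJ$ acts by $v\Phi \mapsto vw^*\Phi$, so it commutes with the left action of $\Cc$ on $\H_\infty$ and hence, by density, on all of~$\H$; consequently $J\Cc J \subseteq \Cc'$. It follows that $\Cc'\Phi \supseteq J\Cc J\,\Phi = J(\H_\infty)$ is dense in~$\H$, so $\Phi$ is cyclic for~$\Cc'$ as well. The standard duality ``cyclic for $N$ implies separating for $N'$'' applied to $N = \Cc'$ then yields that $\Phi$ is separating for~$\Cc'' = (\Cc')'$.

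For part~(b), normality of $\tilde\sg_\Phi$ is automatic, being a vector state; faithfulness follows because if $T \in \Cc''$ is positive and $\tilde\sg_\Phi(T) = \|T^{1/2}\Phi\|^2 = 0$, then $T^{1/2}\Phi = 0$, and the separating property just proved forces $T^{1/2} = T = 0$. The main technical step, and the place that merits care, is promoting the trace identity from $\Cc$ to~$\Cc''$. For this I would invoke the Kaplansky density theorem: given $T, S \in \Cc''$, choose bounded nets $w_\al, v_\bt \in \Cc$ with $w_\al \to T$ and $v_\bt \to S$ strongly. For each fixed~$v_\bt$, the identity $\tilde\sg_\Phi(w_\al v_\bt) = \tilde\sg_\Phi(v_\bt w_\al)$ is inherited from the trace property of $\sg_\Phi$ on~$\Cc$, and one passes to the limit in~$\al$ by noting that $w_\al v_\bt\Phi \to T v_\bt\Phi$ and $v_\bt w_\al\Phi \to v_\bt T\Phi$ in norm (left multiplication by a fixed bounded operator is norm-continuous when applied to a fixed vector), giving $\tilde\sg_\Phi(T v_\bt) = \tilde\sg_\Phi(v_\bt T)$. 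A second limit in~$\bt$, by the same device, yields $\tilde\sg_\Phi(TS) = \tilde\sg_\Phi(ST)$. The subtlety to handle is that multiplication is not jointly strongly continuous, but because $\tilde\sg_\Phi$ only sees the vectors produced by applying operators to~$\Phi$, the limits are taken at single vectors where strong convergence suffices.
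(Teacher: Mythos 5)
Your proof is correct, but for part~(a) it takes a genuinely different route from the paper. For the separating property on $\CDA''$, you explicitly build the antiunitary involution $J \: w\Phi \mapsto w^*\Phi$ (well defined and isometric precisely because $\Phi$ is algebraically separating and $\sg_\Phi$ is tracial, by Lemma~\ref{lm:interchange}), check that $J\Cc J \subseteq \Cc'$, deduce that $\Phi$ is cyclic for $\Cc'$, and then use the standard duality ``cyclic for $N$ $\Rightarrow$ separating for $N'$'' with $N = \Cc'$. The paper instead argues directly: it takes a positive $T \in \CDA''$ with $T\Phi = 0$, approximates it strongly by positive elements $T_n \in \CDA$, and uses the trace identity $\scal<\Phi|w^*T_n w\Phi> = \scal<T_n\Phi|ww^*\Phi>$ to force $T_n^{1/2} \to 0$ strongly on the dense subspace $\CDA\Phi$, hence $T = 0$. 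Your construction has the advantage of producing, already at this stage, the conjugation operator that the paper only obtains afterwards from Tomita--Takesaki theory in~\eqref{eq:Tomita}, and it avoids the (mild) technical point of approximating a positive element of $\CDA''$ by positive elements of the $*$-algebra $\CDA$ itself; the paper's argument is more elementary in that it never leaves the given representation and needs no commutant duality. For part~(b) the two proofs essentially coincide: faithfulness follows from the separating property exactly as in the paper, and your Kaplansky-density/strong-limit argument simply fills in the step the paper dismisses as ``straightforward to show'' (that a normal extension of a trace on $\CDA$ is a trace on $\CDA''$); note that since all limits are evaluated on single vectors of the form $x\Phi$, strong convergence alone suffices and the boundedness of the approximating nets is not actually needed.
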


\begin{proof}
Ad~(a):
The dense subspace $\H_\infty$ of $\H$ equals $\CDA\Phi$ by
hypothesis, so that $\CDA''\Phi$ is also dense in~$\H$, i.e., $\Phi$
is a cyclic vector for~$\CDA''$ in the topological sense.
 
Any cyclic vector associated to a tracial vector state is separating;
as we show. Assume that $0 < T \in \CDA''$ with $T\Phi = 0$, and that
$0 < T_n \in \CDA$ converge strongly to~$T$. Then
$T_n\Phi \to T\Phi = 0$. Hence, for each $w \in \CDA$,
\begin{align*}
\|T_n^{1/2} w \Phi\|^2 
&= \scal<\Phi | w^* T_n w \Phi> = \scal<\Phi | T_n ww^* \Phi>
\\ 
&= \scal<T_n \Phi | ww^* \Phi> \leq \|T_n \Phi\|^2 \|ww^* \Phi\|^2.
\end{align*}
The second equality uses Lemma~\ref{lm:interchange}, the tracial
property of~$\sg_\Phi$. Hence $\lim_n \|T_n^{1/2} w \Phi\| = 0$. As
$\CDA\Phi$ is dense in $\H$, $T_n^{1/2}$ converges strongly to $0$.
Hence $T_n$ converges strongly to $0$. By uniqueness of strong limits
in $\CDA''$, $T = 0$. Thus $\Phi$ is separating for $\CDA''$.

Ad~(b):
Since $\scal< \Phi | T^*T\Phi > = \|T\Phi\|^2$, it follows from the
separating property of $\Phi$ on $\CDA''$ in (a) that
$\tilde{\sg}_\Phi(T^*T) = 0$ if and only if $T = 0$. Hence
$\tilde{\sg}_\Phi$ is faithful. It is straightforward to show that any
normal extension of a trace on $\CDA$ is a trace on $\CDA''$. Hence
$\tilde{\sg}_\Phi$ is a faithful trace. It is evidently finite.
\end{proof}

\begin{rmk} 
\label{rk:extended-ncint}
Since $z = \pairL_\Cc(\Phi|\Phi)$ is central in~$\CDA$, it lies
in~$\CDA'$, hence $z$~is also central in $\CDA''$. It is therefore
pertinent to ask whether the formula \eqref{eq:sigma_fn} can be
extended to the bicommutant, that is, whether there exists a (unique)
faithful normal trace $\tau\: \CDA'' \to \C$, such that
$\tilde\sg_\Phi(T) = \tau(Tz)$ for all $T \in \CDA''$, which extends
the faithful trace $\psi_\om$ on~$\CDA$, 

Such an extension to a normal trace is not trivial, since $\psi_\om$
may fail to be strongly continuous on~$\CDA$ because it arises from a
Dixmier trace~$\Trw$. It is, however, possible to construct such
a~$\tau$ provided one assumes explicitly that $z$ is invertible
in~$\CDA$. We do not require this extended trace for the present
purposes, so we leave it aside.
\end{rmk}

In view of Lemma~\ref{lm:extension}, from now on we shall write 
simply $\sg_\Phi$, rather than $\tilde\sg_\Phi$, to denote the vector 
state $\scal<\Phi|(\cdot)\Phi>$ on $\CDA''$ as well as on~$\CDA$.

As a consequence of the Tomita--Takesaki theory \cite{TakesakiTwo}
there exists an antiunitary operator $J = J_\Phi$ such that
$J (\cdot)^* J : \CDA'' \to \CDA'$ is an antiisomorphism of von
Neumann algebras and
\begin{equation}
J(T\Phi) = T^* \Phi,  \word{for all}  T \in \CDA''.
\label{eq:Tomita} 
\end{equation}
The assignment $w^\op := J w^* J$ enables a commuting right action of
the algebra $\CDA$ on~$\H$. \emph{The first order condition is
automatically satisfied}:
$$
[a, b^\op] = 0, \quad  [[\D, a], b^\op] = 0,
\word{for all} a,b \in \A.
$$

\begin{lemma} 
\label{lm:grading-Riemann}
Suppose $(\A,\H,\D,\cc,\Phi)$ satisfies Conditions \ref{cn:qc-infty},
\ref{cn:spec-dim}, \ref{cn:orient} and~\ref{cn:Riemann}. Write
$C = \pi_\D(\cc)$, let $\eps$ be the grading operator, and let 
$J$ be the Tomita conjugation~\eqref{eq:Tomita}. Then
\begin{enumerate}
\item[\textup{(a)}]
$[J,\eps] = 0$;
\item[\textup{(b)}]
$\eps = C J C J = C C^\op$  when $p$ is even;
\item[\textup{(c)}]
$\eps C = - C \eps$ and $C = J C J = C^\op$, when $p$ is odd;
\item[\textup{(d)}]
$J$ maps $\H_\infty$ to $\H_\infty$ bijectively;
\item[\textup{(e)}]
$\H_\infty$ is a pre-Morita equivalence bimodule between $\CDA$ and
itself.
\end{enumerate}
\end{lemma}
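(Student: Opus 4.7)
The strategy is to reduce each assertion to an identity on the dense subspace $\H_\infty = \CDA\Phi$ and extend by continuity, using throughout the Tomita relation $J(T\Phi) = T^*\Phi$ for $T \in \CDA''$ (which at $T=1$ gives $J\Phi = \Phi$). For~(a), since $\eps\Phi = \Phi$ and $\eps(\cdot)\eps$ is the parity grading of $\CDA$ (Condition~\ref{cn:Riemann}), I compute, for $w \in \CDA$,
\[
J\eps(w\Phi) = J\bigl((\eps w\eps)\Phi\bigr) = (\eps w\eps)^*\Phi = \eps w^*\eps\,\Phi = \eps w^*\Phi = \eps J(w\Phi),
\]
and density yields $[J,\eps] = 0$ on $\H$.

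For (b) and (c), the key preliminary is the $\eps$-parity of $C = \pi_\D(\cc)$: since $\A$ is $\eps$-even and each $[\D,a]$ is $\eps$-odd, $C$ is $\eps$-homogeneous of parity $p \bmod 2$. In the even case, $\eps C = C\eps$; moreover the orientation conditions $C^2 = 1$, $C\A = \A C$, and $C\D + \D C = 0$ give $CaC = a$ and $C[\D,a]C = -[\D,a]$, so by induction on word length $CwC = \eps w\eps$ for all $w \in \CDA$. Combining this with $J\Phi = \Phi$ and $C = C^*$, a short calculation gives
\[
CJCJ(w\Phi) = C\bigl((Cw^*)^*\Phi\bigr) = CwC\,\Phi = \eps w\eps\,\Phi = \eps(w\Phi),
\]
so $\eps = CJCJ = CC^\op$, using $C^\op = JC^*J = JCJ$. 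In the odd case, the same parity count gives $\eps C = -C\eps$; the sign $(-1)^{p-1} = 1$ forces $C\D = \D C$, and combined with $C\A = \A C$ this makes $C$ central in $\CDA$. Centrality collapses the analogous computation to $JCJ(w\Phi) = wC\Phi = Cw\Phi = C(w\Phi)$, giving $JCJ = C$ on $\H_\infty$ and hence $C = JCJ = C^\op$.

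Part~(d) is immediate: $J(w\Phi) = w^*\Phi \in \CDA\Phi = \H_\infty$ since $\CDA$ is $*$-closed, and $J^2 = \Id$ (Tomita--Takesaki) makes $J|_{\H_\infty}$ an involutive bijection. For (e), I define the right $\CDA$-action on $\H_\infty$ by $\xi\cdot v := (Jv^*J)\xi$; this commutes with the left action since $Jv^*J \in (\CDA'')'$, and on generators reduces to $(w\Phi)\cdot v = wv\Phi$, exhibiting $\H_\infty$ as a $\CDA$-bimodule canonically isomorphic to $\CDA$. I take
\[
\pairL_\Cc(u\Phi|w\Phi) := uzw^*, \qquad \pairR(u\Phi|w\Phi)_\Cc := zu^*w,
\]
where $z = \pairL_\Cc(\Phi|\Phi)$ is central and strictly positive. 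Well-definedness uses the separating property of $\Phi$; sesquilinearity, Hermitian symmetry, the compatibility $\pairL_\Cc(e|f)\cdot g = e\cdot\pairR(f|g)_\Cc$, and the continuity bounds of~\eqref{eq:Morita-norms} all follow from centrality of $z$ together with routine manipulations (e.g.\ $aa^*z \leq \|a\|^2 z$ because the positive elements $\|a\|^2 - aa^*$ and $z$ commute). The main obstacle I anticipate is \emph{fullness}: $\pairL_\Cc(\H_\infty|\H_\infty) = \CDA z$ coincides with $\CDA$ only if $z^{-1} \in \CDA$; strict positivity gives invertibility in the $C^*$-closure, so one must verify that $\CDA$ (or a suitable completion) is stable under the holomorphic functional calculus. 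Granted that mild technical hypothesis, fullness is automatic and (e) is complete.
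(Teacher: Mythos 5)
Your treatment of (a)--(d) is essentially the paper's argument: everything is checked on the dense subspace $\CDA\Phi$ using $J(w\Phi)=w^*\Phi$, $J\Phi=\Phi$, $\eps\Phi=\Phi$ and the parity of $C$. The only methodological difference is in (b): the paper observes that $U:=C\eps$ and $JCJ$ both lie in $\CDA'$ and agree on the vector $\Phi$, which is separating for $\CDA'$, hence $JCJ=C\eps$ and $\eps=CJCJ$; you instead prove the algebra identity $CwC=\eps w\eps$ on $\CDA$ and evaluate $CJCJ$ directly on $w\Phi$. Both are valid, and your route is if anything more elementary (it avoids invoking that a cyclic vector for $\CDA''$ is separating for $\CDA'$). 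Your (c), (d) and the construction in (e) of the right action $\xi\cdot v:=Jv^*J\,\xi$, the pairing $\pairR(\xi|\eta)_\Cc=\pairL_\Cc(J\xi|J\eta)$ (your $zu^*w$ is the same thing, by centrality of $z$), the compatibility identity and the estimates \eqref{eq:Morita-norms} via $z^{1/2}$ all match the paper's proof.

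The one genuine flaw is the ``obstacle'' you flag at the end of (e): it is spurious, and as written it leaves (e) conditional on a hypothesis the lemma does not (and should not) contain. Fullness in Definition~\ref{df:modules} asks only that $\linspan\pairL_\Cc(\H_\infty|\H_\infty)$ be \emph{dense} in $\CDA$, not that it equal $\CDA$; so you do not need $z^{-1}\in\CDA$ nor stability of $\CDA$ under the holomorphic functional calculus. Since $z$ is strictly positive and the $C^*$-completion $C$ of $\CDA$ is unital, $z$ is invertible in $C$; hence for any $w\in\CDA$ one may choose $v_n\in\CDA$ with $v_n\to wz^{-1}$ in norm, and then $v_n z\to w$, so $\CDA z=\linspan\pairL_\Cc(\H_\infty|\H_\infty)$ is already norm-dense in $\CDA$ (and the same for the right pairing, conjugating by $J$). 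Note that the paper deliberately avoids assuming $z$ invertible \emph{in} $\CDA$ (see Remark~\ref{rk:extended-ncint}, where that is treated as an extra assumption). Replacing your conditional clause by this density argument closes (e) unconditionally.
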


\begin{proof} 
The Tomita conjugation $J$ for the vector $\Phi$ is the unique
antilinear operator defined by $J(T\Phi) := T^*\Phi$ for all
$T \in \CDA''$. This operator is bounded and antiunitary since the
vector state $\sg_\Phi$ is a trace. Note that
$J \Phi = \Phi$ since $\CDA'' \supset \CDA$ is unital. The additional
properties that $J^2 = 1$, and that $w = J w^* J$ if and only if
$w \in \CDA$ is central, then follow from the cyclic and separating
properties of the vector~$\Phi$.

Ad~(a):
To see that $[J,\eps] = 0$, take 
$w = w_\mathrm{e} + w_\mathrm{o} \in \CDA$, where
$\eps\,w_\mathrm{e}\,\eps = w_\mathrm{e}$
and $\eps\,w_\mathrm{o}\,\eps = - w_\mathrm{o}$. Since $\eps$ is 
selfadjoint, we get $w^* = (w^*)_\mathrm{e} + (w^*)_\mathrm{o}
= (w_\mathrm{e})^* + (w_\mathrm{o})^*$. Then
$$
\eps J w\Phi = \eps w^*\Phi = (w_\mathrm{e}^* - w_\mathrm{o}^*)\Phi
= J (w_\mathrm{e} - w_\mathrm{o})\Phi = J \eps w \Phi.
$$
Hence $[J, \eps] w \Phi = 0$ for all $w \in \CDA$; equivalently,
$[J, \eps]\xi = 0$ for all $\xi \in \H_\infty$, or $\xi \in \H$ for 
that matter. Hence $[J, \eps] = 0$.

Ad~(b):
Let $p$ be even. Then $C\,\eps = \eps\,C$ as $C \in C_\D(\A)$ is of
even parity by Condition~\ref{cn:orient}. Moreover $U := C\,\eps$
commutes with $\D$ and $\A$ by Conditions \ref{cn:orient}
and~\ref{cn:Riemann}. Hence $U \in \CDA'$ and it is a selfadjoint
unitary. Note also, since $C \in \CDA''$, that $J C J \in \CDA'$ and
$\Phi$ is cyclic and separating for $\CDA'$. Then $(JCJ - U) \Phi 
= J C J \Phi - C\eps \Phi = J C \Phi - C \Phi = C \Phi - C \Phi = 0$.
Hence $U = J C J$ by the separating property of~$\Phi$. It follows 
that $\eps = C U = C J C J$.

Ad~(c):
Let $p$ be odd. Then $C$ commutes with $\D$ and $\A$ by
Condition~\ref{cn:orient}. In this case $C$ is central in $C_\D(\A)$,
which occurs if and only if $C = J C^* J = J C J$. It is immediate
that $C \eps = -\eps C$ since $C$ has odd parity by
Condition~\ref{cn:orient}.

Ad~(d):
This is clear, since $J w \Phi = w^* \Phi$ for all $w \in \CDA$.

Ad~(e):
We define a \textit{right action} of $\Cc = \CDA$ on $\H_\infty$ by
$$
w^\op \xi \equiv \xi \cdot w := J w^* J\xi,
\word{for all}  \xi \in \H_\infty, \ w \in \Cc.
$$
Note that, in particular
$w \Phi = J(w^*\Phi) = w^\op J \Phi = w^\op \Phi = \Phi \cdot w$
for all $w \in \Cc$. The right-module inner product on $\H_\infty$ is
defined by 
$$
\pairR(\xi|\eta)_\Cc := \pairL_\Cc(J\xi|J\eta),
$$
which works because $J$ is antilinear. We now check that this agrees
with the natural hermitian pairing of (compact) endomorphisms of
$\H_\infty$ as a left $\Cc$-module, in order that $\H_\infty$ satisfy 
the requirements of a pre-Morita equivalence bimodule.

For $\xi,\eta,\zeta \in \H_\infty$ we write $\xi = w\,\Phi$,
$\eta = v\,\Phi$, $\zeta = u\,\Phi$ where $w,v,u \in \Cc$. The
relationship between the inner products follows from the centrality of
$z = \pairL_\Cc(\Phi|\Phi)$ and the computation
\begin{align*}
\xi\,\Theta_{\eta,\zeta}
&=  \pairL_\Cc(\xi|\eta)\,\zeta = \pairL_\Cc(w\Phi|v\Phi)\,u\Phi
= w z v^* u \Phi = w (v^* z u) \Phi 
\\
&= w (v^* z u)^\op \Phi = (v^* z u)^\op w \Phi
= w\Phi \cdot \pairL_\Cc(v^*\Phi|u^*\Phi)
= \xi \cdot \pairL_\Cc(J\eta|J\zeta).
\end{align*}
Hence every rank-one operator is contained in the range of the inner
product and in the range of the
right action of $\CDA$. 

The proof is completed by checking the estimates
\eqref{eq:Morita-norms}. For the right-module inner product, we get, 
for all $\xi = w\Phi \in \H_\infty$ and $v \in \Cc$:
\begin{align*}
\pairR(v\xi | v\xi)_\Cc
&= \pairL_\Cc(Jvw\Phi | Jvw\Phi) = w^* v^* z v w
= z^{1/2}w^*v^*v w z^{1/2}
\\
&\leq z^{1/2}w^* \|v^*v\| w z^{1/2} 
= \|v\|^2 \, w^* z w
=  \|v\|^2 \, \pairL_\Cc(Jw\Phi|Jw\Phi)
=  \|v\|^2 \, \pairR(\xi|\xi)_\Cc \,.
\end{align*}
The estimate for the left-module inner product is similar.
\end{proof}

In odd dimensions the centrality of $C = JCJ$ gives us a second
$\Z_2$-grading.

\begin{corl} 
\label{cr:odd-big-algebra}
Suppose $(\A,\H,\D)$ satisfies Conditions \ref{cn:qc-infty},
\ref{cn:spec-dim}, \ref{cn:orient} and~\ref{cn:Riemann}, and that the
spectral dimension $p$ is an odd integer. Then
$P_\pm := \half(1 \pm C)$ are two complementary projectors commuting
with both actions of~$\CDA$, thereby determining a second $\Z_2$-grading
on $\CDA = \CDA^+ \oplus \CDA^-$ where $\CDA^+ := P_+\,\CDA\,P_+$
and $\CDA^- := P_-\,\CDA\,P_-$. Moreover $\CDA^+=\eps\CDA^-\eps$.
\end{corl}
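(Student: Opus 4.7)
The plan is to assemble the conclusion directly from the properties of $C = \pi_\D(\cc)$ that are already on the table: from Condition~\ref{cn:orient} with $p$ odd one has $C^* = C$, $C^2 = 1$, $[C,\D]=0$ and $[C,a]=0$ for all $a \in \A$; from Lemma~\ref{lm:grading-Riemann}(c) one has in addition $\eps C = -C\eps$ and, crucially, $C = JCJ = C^\op$. Combining $[C,\D]=0$ with $[C,\A]=0$ via the Jacobi identity shows that $C$ commutes with every $[\D,a]$, so $C$ lies in the centre of $\CDA$.

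Given this, the projector statements are immediate: $P_\pm = \half(1 \pm C)$ are selfadjoint, $P_\pm^2 = P_\pm$, $P_+P_- = \frac{1}{4}(1 - C^2) = 0$, and $P_+ + P_- = 1$, using only $C^* = C$ and $C^2 = 1$. Since $C$ is central in $\CDA$, so are $P_\pm$, which handles commutation with the left action. For the right action $w^\op = Jw^*J$, the identity $C = C^\op$ means right multiplication by $C$ coincides with left multiplication by $C$, so $P_\pm$ commute with every $w^\op$ as well. This yields $P_\pm \CDA P_\pm = P_\pm \CDA = \CDA P_\pm$ and the orthogonal direct sum decomposition $\CDA = \CDA^+ \oplus \CDA^-$.

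Finally, for the identity $\CDA^+ = \eps \CDA^- \eps$, observe that $\eps$ commutes with $\A$ and anticommutes with $\D$, so $w \mapsto \eps w \eps$ is the parity grading on $\CDA$ and in particular $\eps \CDA \eps = \CDA$. Together with $\eps C = -C\eps$, which gives $\eps P_\pm \eps = P_\mp$, the calculation
\[
\eps \CDA^- \eps = (\eps P_- \eps)(\eps\,\CDA\,\eps)(\eps P_- \eps) = P_+ \CDA P_+ = \CDA^+
\]
closes the proof.

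There is no genuine obstacle here; the only point requiring care is recognising that the identity $C = C^\op$ from Lemma~\ref{lm:grading-Riemann}(c) is precisely what promotes $C$ from being central in $\CDA$ (which would only give commutation with the left action) to commuting with both the left and right actions of $\CDA$, which is what makes $P_\pm$ genuine central projectors for the bimodule structure and validates the second $\Z_2$-grading.
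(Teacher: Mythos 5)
Your proof is correct and follows essentially the same route the paper intends: centrality of $C$ in $\CDA$ via $[C,\D]=0$, $[C,\A]=0$ and the Jacobi identity, the elementary projector algebra for $P_\pm$, the relation $C=JCJ=C^\op$ from Lemma~\ref{lm:grading-Riemann}(c) for the right action, and $\eps C=-C\eps$ to get $\eps\CDA^\mp\eps=\CDA^\pm$. The only cosmetic remark is that commutation of $P_\pm$ with the right action is already automatic from $C\in\CDA$ and $\CDA^\op=J\CDA J\subset\CDA'$, so the appeal to $C=C^\op$ is a convenient restatement rather than the essential new ingredient.
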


Finally we reconcile the two descriptions of the Hilbert space coming
from Conditions~\ref{cn:Riemann} and~\ref{cn:finite}. Since 
Lemma~\ref{lm:interchange} shows that $\psi_\om$ is faithful on
$\A \subset \CDA$, assuming Condition~\ref{cn:Riemann} of course, that
part of Condition~\ref{cn:finite} is redundant.

\begin{corl} 
\label{cr:op-weight}
Suppose $(\A,\H,\D)$ satisfies Conditions \ref{cn:qc-infty},
\ref{cn:spec-dim}, and~\ref{cn:Riemann}. Then $(\A,\H,\D)$ satisfies
Condition~\ref{cn:finite} if and only if:
\begin{enumerate}
\item[\textup{(i)}]
$\CDA$ is a finite projective left $\A$-module (treating
$\A \subset \CDA$);
\item[\textup{(ii)}]
there is some operator-valued weight $\Psi\: \CDA \to \A$ such that
$\psi_\om = \psi_\om \circ \Psi$ for all Dixmier limits~$\om$.
\end{enumerate}
\end{corl}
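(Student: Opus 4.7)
The claim is an iff, and the $(\Leftarrow)$ direction turns out to be essentially vacuous. Under Condition~\ref{cn:Riemann}, the cyclic and separating vector~$\Phi$ makes $w \mapsto w\Phi$ an isomorphism of $\CDA$ onto $\H_\infty$ as left $\CDA$-modules, so $\H_\infty$ is free of rank one, and in particular finitely generated projective, over~$\CDA$. Lemma~\ref{lm:interchange} already supplies the faithfulness of $\psi_\om$ on~$\CDA$ together with the identification $\H = L^2(\H_\infty,\psi_\om)$. Hence all three clauses of Condition~\ref{cn:finite} hold under the standing hypotheses alone, independently of (i) and~(ii).

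For the $(\Rightarrow)$ direction I would extract (i) and~(ii) from the assumed conditions. Item~(i) is immediate: the isomorphism $w \mapsto w\Phi$ is simultaneously a left $\A$-module map, since $\A \subset \CDA$; and Condition~\ref{cn:Riemann} explicitly demands that $\H_\infty$ be finite projective as a left $\A$-module, so the same property transfers to $\CDA$. For item~(ii), my plan is to invoke Lemma~\ref{lm:op-weight}: I construct a left $\A$-valued inner product $\pairL_\A(\cdot|\cdot)$ on~$\CDA$ for which right multiplication by elements of~$\CDA$ is adjointable, and then take $\Psi(w) := \pairL_\A(w|1)$. The pairing is manufactured by pulling back the standard hermitian form along a finite projective presentation of $\CDA$ over~$\A$ provided by~(i); adjointability of right multiplication follows from Lemma~\ref{lm:grading-Riemann}(d), since the Tomita right action $w \mapsto Jw^*J$ is bounded on~$\H$ and commutes with the left action of~$\A$.

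The main obstacle is to verify the trace-compatibility $\psi_\om = \psi_\om \circ \Psi$ simultaneously for every Dixmier limit~$\om$. The plan here is to combine the trace property of $\psi_\om$ on~$\CDA$ (Lemma~\ref{lm:interchange}) with the absolute continuity formula~\eqref{eq:abs-cont-bis}, so that both $\psi_\om(w)$ and $\psi_\om(\Psi(w))$ reduce to the same vector-state-type expression involving $\Phi$, $w$, and the strictly positive central element $z = \pairL_\Cc(\Phi|\Phi)$. The delicate point is that the abstract $\A$-valued pairing arising from projectivity is only determined up to a positive ``metric'' in the spirit of Lemmas~\ref{lm:matr-pairing} and~\ref{lm:smooth-pairing}; the correct normalisation must be pinned down by a uniqueness argument modelled on Proposition~\ref{pr::matr-pairing}, ensuring that the transported pairing on~$\CDA$ indeed realises the composition of the desired $\Psi$ with the inclusion $\A \subset \CDA$ in a trace-preserving way. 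Once this normalisation is secured, the trace identity falls out by unwinding the two scalar-product representations of $\scal<\Phi|w\Phi>$.
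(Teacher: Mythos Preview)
Your observation that the $(\Leftarrow)$ direction is vacuous under the standing hypotheses is correct and sharper than the paper: $\H_\infty = \CDA\,\Phi$ is free of rank one over~$\CDA$, and Lemma~\ref{lm:interchange} already gives faithfulness of~$\psi_\om$ together with $\H = L^2(\H_\infty,\psi_\om)$, so Condition~\ref{cn:finite} holds without appeal to (i) or~(ii). For $(\Rightarrow)$, your derivation of~(i) and the use of Lemma~\ref{lm:op-weight} to produce~$\Psi$ are the paper's route as well; the paper then simply notes that both $\psi_\om\bigl(\Psi(\pairL_\Cc(\xi|\eta))\bigr)$ and $\psi_\om\bigl(\pairL_\Cc(\xi|\eta)\bigr)$ recover the same Hilbert-space scalar product, and extends the resulting identity from the full range $\pairL_\Cc(\H_\infty|\H_\infty)$ to all of~$\CDA$ by fullness of the pairing and norm-continuity of~$\psi_\om$.

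Where your plan goes wrong is the proposed ``normalisation'' of the $\A$-valued pairing via an argument modelled on Proposition~\ref{pr::matr-pairing}. That Proposition carries the additional hypothesis that only scalars commute with both $\A$ and~$\D$; this irreducibility is not among the assumptions of the Corollary, and without it the positive element~$r$ of Lemma~\ref{lm:matr-pairing} has no reason to be scalar, so the uniqueness mechanism you invoke is unavailable. The paper never tries to single out a canonical pairing; it works with whatever $\Psi$ Lemma~\ref{lm:op-weight} supplies and closes the argument with the fullness-plus-continuity extension above. Drop the appeal to Proposition~\ref{pr::matr-pairing} and follow that density step instead. (Incidentally, adjointability of right multiplication for Lemma~\ref{lm:op-weight} is automatic once $\CDA$ is finitely generated projective over~$\A$, since every left $\A$-linear endomorphism of a finitely generated projective module is adjointable; the detour through Tomita theory is unnecessary here.)
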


\begin{proof}
($\Rightarrow$): 
By Lemma~\ref{lm:op-weight}, there is an operator valued weight
$\Psi\: \CDA \to \A$ such that 
$\pairL_\A(\xi|\eta) = \Psi\bigl( \pairL_\Cc(\xi|\eta) \bigr)$ for all
$\xi,\eta \in \H_\infty$. Then Condition~\ref{cn:Riemann} together with
(i) and~(ii) imply that $\H_\infty$ is a finite projective left
$\A$-module and $\scal<\eta|\xi> = \psi_\om(\pairL_\A(\xi|\eta))$.

($\Leftarrow$):
Conditions \ref{cn:finite} and~\ref{cn:Riemann} together imply that
$\CDA \Phi$, and also $\CDA$ by $\A$-linear isomorphism, are finitely 
generated and projective over~$\A$ ---the existence of~$\Psi$ then 
follows from Lemma~\ref{lm:op-weight}--- and that
$\psi_\om(\Psi(\pairL_\Cc(\xi|\eta)))
= \psi_\om(\pairL_\Cc(\xi|\eta))$, for $\xi,\eta \in \H_\infty$.
Fullness of the hermitian pairing on~$\CDA$ and norm continuity of
$\psi_\om$ on~$\CDA$ then entails $\psi_\om \circ \Psi = \psi_\om$.
\end{proof}

\subsection{Kasparov's fundamental class} 
\label{ssc:kas-mod}

In \cite[Defn.-lemma~4.2]{KasparovEqvar}, Kasparov showed that a
Riemannian manifold $(M,g)$ has a fundamental class
$\la \in KK(C(M) \ox \Cliff(M),\C)$. The Kasparov product with this
class provides isomorphisms \cite[Thms.~4.9, 4.10]{KasparovEqvar}:
\begin{align*}
- \hatox_{C(M)} \la 
&: K_*(C(M)) = KK(\C, C(M)) \to KK(\Cliff(M), \C),
\\
- \hatox_{\Cliff(M)} \la
&: KK(\C,\Cliff(M)) \to KK(C(M), \C) = K^*(C(M)).
\end{align*}
Here we have been careful to use $KK$ notation (rather than
just $K$-homology or $K$-theory notation) where the algebra is
regarded as $\Z_2$-graded.

The class $\la$ can be represented by an unbounded Kasparov module
\cite{BaajJ} given by the Hilbert space $\H = L^2(\La^\8 T^*_\C M,g)$,
the Hodge--de Rham operator $d + d^*$, and the representation of
$C(M)$ given by multiplication operators. The representation of the
Clifford algebra $\Cliff(M)$ is given as follows (see
\cite[Appendix~A]{Crux} for a similar discussion).

Observe that $\D = d + d^*$ is the Dirac operator associated to the
left action of the Clifford algebra and the Levi-Civita connection.
Let $\D'$ be the Dirac operator associated to the right action of
the Clifford algebra, and let $\eps$ denote the grading of forms by
degree. On $k$-forms in $\La^k T^*_\C M$, where 
$\eps|_{\La^k} = (-1)^k$, the operator $\D'$ is given by
$$ 
\D' \bigr|_{\La^k} := (-1)^k (d - d^*).
$$
Let $\Dtilde := i\D'\eps = i(d - d^*)$. One can check (see, for
instance, \cite[Sect.~9.B]{Polaris} or the arguments below) that
$df \mapsto [\Dtilde, f]$ provides a representation of the algebra of
sections of the (real) Clifford algebra bundle for the quadratic form
$-g$ on $T^*M$.

Passing to the complexification, we get a representation of the
complex Clifford algebra $\Cliff(M)$, which graded-commutes with the
left action of the Clifford algebra, and so graded-commutes with the
symbol of $d + d^*$. Standard techniques, as described in
\cite[Defn.-lemma~4.2]{KasparovEqvar}, now show that there is a
graded, even Kasparov module for $C(M) \ox \Cliff(M)$, with this
representation of the Clifford algebra.

This complicated construction is necessary for the following reason.
The right action of $\Cliff(M)$ has bounded commutators with $d + d^*$
because their principal symbols commute. However, the right action
does not commute with the grading; rather, it graded-commutes, so we
need to employ graded commutators, which are not bounded. Thus to get
an honest Kasparov module for the Clifford algebra we must construct
this new representation.%
\footnote{Thanks to Nigel Higson for explaining the solution of this
conundrum to us.}

Concretely, the standard left Clifford action of a $1$-form $\al$ on 
forms, coming from~$\D$, is given by
$c(\al) := \eps(\al) - \iota(\al^\3)$, where 
$\eps(\al) \: \om \mapsto \al \w \om$ is exterior multiplication, and
$\iota(\al^\3)$ is contraction with the $g$-dual vectorfield $\al^\3$
of~$\al$. One checks that the the representation coming from~$\D'$ is 
given for $1$-forms by $c'(\bt) := \eps(\bt) + \iota(\bt^\3)$. The 
graded commutation $[c(\al), c'(\bt)]_+ = 0$ is now immediate.

What we will now show is that our characterisation of the Riemannian
structure of a manifold allows for the construction of an analogous
class, even in the noncommutative case.

\begin{defn} 
\label{df:dee-right}
Let $(\A,\H,\D,\cc,\Phi)$ be a noncommutative Riemannian manifold. We
define $\D' := J\D J$, and set $\Dtilde := i\D'\eps$. As before, we
write $b^\op = Jb^*J$ for the right action of~$\A$.
\end{defn}

\begin{lemma} 
\label{lm:anti-comm}
Let $(\A,\H,\D,\cc,\Phi)$ be a noncommutative Riemannian manifold, and
let $a,b \in \A$. Then
\begin{equation}
[\D, a]\,[\Dtilde, b^\op] + [\Dtilde, b^\op]\,[\D, a] = 0.
\end{equation}
\end{lemma}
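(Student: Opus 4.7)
The plan is to unpack $[\Dtilde, b^\op]$ using the definitions $\Dtilde = i\D'\eps$, $\D' = J\D J$, and $b^\op = Jb^*J$, and thereby reduce the desired graded anticommutator to an ordinary commutator between an element of $\CDA$ and an element of its commutant $\CDA'$, where vanishing is immediate.

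First I would sort out the relevant parity relations. Lemma \ref{lm:grading-Riemann}(a) gives $[J, \eps] = 0$; combined with the trivial grading on $\A$ (so $[\eps, a] = 0$ for all $a \in \A$), this yields $[\eps, b^\op] = 0$. The relation $\eps\D = -\D\eps$ propagates through $J$ to give $\eps\D' = -\D'\eps$. Consequently
\[
[\Dtilde, b^\op] = i[\D'\eps, b^\op] = i[\D', b^\op]\,\eps.
\]

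Second, I would compute $[\D', b^\op]$ directly; using $J^2 = 1$ one gets
\[
[\D', b^\op] = [J\D J,\, Jb^*J] = J[\D, b^*]J.
\]
This one-line identification is the key structural input. Since $[\D, b^*] \in \CDA \subset \CDA''$, and since the Tomita conjugation satisfies $J\,\CDA''\,J = (\CDA'')'$ (by the Tomita--Takesaki theorem applied to the faithful tracial vector state of Lemma \ref{lm:extension}(b)), we obtain $J[\D, b^*]J \in (\CDA'')' \subset \CDA'$.

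Third, I would assemble the anticommutator. Since $\D$ anticommutes with $\eps$ and $a$ commutes with $\eps$, we have $\eps[\D, a] = -[\D, a]\eps$, so
\[
[\D, a][\Dtilde, b^\op] + [\Dtilde, b^\op][\D, a] = i\bigl([\D, a][\D', b^\op] - [\D', b^\op][\D, a]\bigr)\eps = i\bigl[[\D, a],\, J[\D, b^*]J\bigr]\eps.
\]
Here $[\D, a] \in \CDA \subset \CDA''$ and $J[\D, b^*]J \in (\CDA'')'$, so the inner commutator vanishes, completing the argument.

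The main obstacle is simply careful sign and parity bookkeeping; the conceptual point is that, once Tomita--Takesaki is available in the Riemannian setting, the ``twisted'' Dirac $\D'$ and the right action $b^\op$ are automatically engineered to commute with the left $\CDA$-action, so this Clifford-type anticommutation falls out without further work, matching the classical Hodge--de~Rham picture reviewed at the start of this subsection.
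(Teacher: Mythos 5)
Your proposal is correct and follows essentially the same route as the paper: you derive the identity $[\Dtilde, b^\op] = iJ[\D,b^*]J\eps$, use the anticommutation of $\eps$ with $[\D,a]$ to turn the anticommutator into an ordinary commutator, and conclude from $J\,\CDA\,J \subset \CDA'$ (via Tomita--Takesaki). The only difference is that you spell out the parity bookkeeping ($[\eps,b^\op]=0$, $\eps\D'=-\D'\eps$) that the paper leaves implicit.
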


\begin{proof}
This is an easy computation. First note that
$[\Dtilde, b^\op] = iJ[\D, b^*]J\eps$. Hence
$$
[\D, a]\,[\Dtilde, b^\op] + [\Dtilde, b^\op]\,[\D, a] 
= i\bigl( [\D,a]J[\D,b^*]J - J[\D,b^*]J[\D,a] \bigr)\eps = 0,
$$
since $J \CDA J \subset \CDA'$.
\end{proof}

\begin{lemma} 
\label{lm:bounded-acomm}
Let $(\A,\H,\D,\cc,\Phi)$ be a noncommutative Riemannian manifold. If
$b \in \A$, then the linear map $T_b \: \H_\infty \to \H_\infty$
defined by
$$ 
T_b := \D\,[\Dtilde, b^\op] + [\Dtilde, b^\op]\,\D
$$
extends to a bounded operator on~$\H$.
\end{lemma}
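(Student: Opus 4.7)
The plan is to reduce boundedness of $T_b$ to the left $\A$-linearity of a simpler operator on $\H_\infty$ and then invoke Proposition~\ref{pr:lin-bdd}. Setting $\om := [\D, b^*] \in \CDA$, which is bounded by regularity, the formula established in the proof of Lemma~\ref{lm:anti-comm} gives $[\Dtilde, b^\op] = iJ\om J\,\eps$. Using $\eps\D = -\D\eps$, a short calculation yields
\begin{equation*}
T_b = i\D J\om J\,\eps + iJ\om J\,\eps\D = i[\D, J\om J]\,\eps,
\end{equation*}
so it suffices to show that $S := [\D, J\om J]$ extends to a bounded operator on~$\H$.

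The key input is that $J\om J$ lies in the commutant $\CDA'$, by the Tomita--Takesaki machinery introduced in Subsection~\ref{ssc:basic-riem}; hence $[J\om J, a] = 0$ and $[J\om J, [\D, a]] = 0$ for every $a \in \A$. Since $\om$ preserves $\H_\infty$ by regularity and $J$ preserves $\H_\infty$ by Lemma~\ref{lm:grading-Riemann}(d), $S$ is a well-defined linear map $\H_\infty \to \H_\infty$. I then check that $S$ is left $\A$-linear: for $a \in \A$ and $\xi \in \H_\infty$, expanding by the Leibniz rule $\D(a\eta) = a\D\eta + [\D,a]\eta$ and using $[J\om J, a] = 0$, one gets
\begin{equation*}
S(a\xi) = a\D J\om J\xi + [\D,a]J\om J\xi - aJ\om J\D\xi - J\om J[\D,a]\xi,
\end{equation*}
and the second and fourth terms cancel because $[J\om J, [\D,a]] = 0$, leaving $S(a\xi) = a\,S(\xi)$.

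Since $\H_\infty$ is finite projective as a left $\A$-module by Condition~\ref{cn:Riemann}, and $\H = L^2(\H_\infty, \psi_\om)$ by Lemma~\ref{lm:interchange}, the left-module version of Proposition~\ref{pr:lin-bdd} (noted in the remark immediately following it) then shows that $S$, and hence $T_b = iS\eps$, extends to a bounded operator on~$\H$. The only step requiring genuine care is the $\A$-linearity verification, which depends essentially on the two commutation identities $[J\om J, a] = 0$ and $[J\om J, [\D,a]] = 0$ for the commutant element $J\om J$; everything else is routine bookkeeping with $\eps$, $J$, and the automatic first-order condition.
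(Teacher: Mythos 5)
Your proof is correct and follows essentially the same route as the paper: the heart of both arguments is that the anticommutator is left $\A$-linear because $J[\D,b^*]J$ lies in the commutant of $\CDA$ (the content of Lemma~\ref{lm:anti-comm}), after which finite projectivity of $\H_\infty$ over $\A$ and Proposition~\ref{pr:lin-bdd} give boundedness. Your rewriting $T_b = i[\D, J\om J]\,\eps$ merely unwinds Lemma~\ref{lm:anti-comm} and repackages the paper's computation $[T_b,a]=0$ as the $\A$-linearity of $[\D,J\om J]$, so it is the same argument in slightly different clothing.
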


\begin{proof}
Since $[\Dtilde, b^\op]$ commutes with the left action of~$\A$, the
commutator $[T_b, a]$ vanishes for each $a \in \A$:
$$
[T_b, a] = [\D, a]\,[\Dtilde, b^\op] + [\Dtilde, b^\op]\,[\D, a] = 0,
$$
by Lemma \ref{lm:anti-comm}; thus $T_b$ is $\A$-linear on $\H_\infty$.
Since $\H_\infty$ is finitely generated and projective over~$\A$, by
Condition \ref{cn:finite}, $T_b$ extends to a bounded operator on $\H$
by Proposition~\ref{pr:lin-bdd}.
\end{proof}

\begin{rmk} 
\label{rk:commutator-estimate}
We have not assumed that $J$ has bounded commutator with $|\D|$ ---or
equivalently, with~$\Dreg$. However, if $[\Dreg,J]$ is bounded, we
also get additional smoothness. Indeed, in that case, for each
$b \in \A$, the commutator
$$
[\Dreg, [\Dtilde, b^\op]]
= iJ [\Dreg, [\D, b^*]] J \eps + i\,[\Dreg,J]\,[\D,b^*]J\eps
- i\,J[\D,b^*]\,[J,\Dreg]\eps
$$
is bounded, since $[\D, b^*] \in \Dom \tilde\dl$. In fact, on 
replacing $\Dreg$ by $\Dreg^k$ or~$|\D|^k$ for any~$k$, we see that
$[\Dtilde, b^\op] \in \Domoo \dl$.
\end{rmk}

Even without the additional smoothness that comes from the boundedness
of $[|\D|, J]$, we can prove the existence of Kasparov's Riemannian
$KK$-class in the present context.

\begin{prop} 
\label{pr:fun-class}
Let $(\A,\H,\D,\cc,\Phi)$ be a noncommutative Riemannian manifold with
spectral dimension~$p$. Then the triple
$$
(\A \ox \CDA^\op, \H, \D)
$$
is a $\Z_2$-graded spectral triple (with $\A \ox \A^\op$ even)
satisfying the first order condition, and represents a class
$\la \in KK(A \ox C^\op, \C)$, where $C^\op$ is the norm closure of
$\CDA^\op$, regarded as a $\Z_2$-graded $C^*$-algebra.
\end{prop}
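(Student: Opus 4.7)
The plan is to construct a graded $*$-representation $\pi : \CDA^\op \to \B(\H)$ that commutes with the left action of~$\A$, and then check the remaining spectral-triple axioms. I would define $\pi$ on generators by $\pi(a^\op) := J a^* J$ for $a \in \A$ (even degree) and $\pi([\D,a]^\op) := -[\Dtilde, a^\op]$ (odd degree). Using selfadjointness of~$\Dtilde$ one checks that $[\Dtilde, a^\op]^* = -[\Dtilde, J a J]$, so $\pi$ is compatible with the involution on generators; using the Leibniz rule for $[\Dtilde, \cdot]$ it respects the reversed Leibniz identity $(a\,[\D,b])^\op = [\D,b]^\op a^\op$, so it extends to a graded $*$-representation of $\CDA^\op$. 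Since $\A$ is of even degree one has $\eps \in \A'$, and $J \CDA J \subseteq \CDA' \subseteq \A'$ by Tomita theory, so $\pi(\CDA^\op) \subseteq \A'$. This produces a representation of $\A \ox \CDA^\op$ on~$\H$ with $\A \ox \A^\op$ as the even subalgebra.

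Next I would verify bounded graded commutators of~$\D$ with the generators. For even generators $b^\op = J b^* J$: from $[a, b^\op] = 0$ for $a \in \A$ and the Jacobi identity, $[[\D, b^\op], a] = [[\D, a], b^\op] = 0$ since $[\D,a] \in \CDA$ and $b^\op \in \CDA'$. By Lemma~\ref{lm:grading-Riemann}(d), $J$ preserves $\H_\infty$, so $b^\op$ does too, and hence $[\D, b^\op]$ is an $\A$-linear map $\H_\infty \to \H_\infty$; since $\H_\infty$ is finitely generated projective over~$\A$ by Condition~\ref{cn:Riemann}, Proposition~\ref{pr:lin-bdd} extends $[\D, b^\op]$ to a bounded operator on~$\H$. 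For the odd generators, the graded commutator is the anticommutator $\{\D, -[\Dtilde, b^\op]\} = -T_b$, which is bounded by Lemma~\ref{lm:bounded-acomm}. The graded first-order condition reduces to $[[\D, a], J w^* J] = 0$ for $w \in \A$ even (vanishing because $\CDA$ commutes with $\CDA'$) and to the anticommutator $\{[\D, a], [\Dtilde, b^\op]\} = 0$ for $w = [\D,b]$ odd, which is precisely Lemma~\ref{lm:anti-comm}.

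Finally, $(1 + \D^2)^{-1/2}$ is compact by Condition~\ref{cn:spec-dim}, and $\pi$ extends continuously to the $C^*$-completion $C^\op$ because every $*$-homomorphism between $C^*$-algebras is contractive. Thus $(\A \ox \CDA^\op, \H, \D, \eps)$ forms an even $\Z_2$-graded spectral triple over $A \ox C^\op$ satisfying the graded first-order condition, and its bounded transform represents a class $\la \in KK(A \ox C^\op, \C)$ by the Baaj--Julg procedure. The delicate point is arranging for the representation of the odd part of $\CDA^\op$ to simultaneously commute with~$\A$, give bounded graded commutators with~$\D$, and satisfy the graded first-order condition: the naive Tomita choice $\pi(w^\op) = J w^* J$ fails the first-order condition on odd~$w$, because $[\D, a]$ and $J [\D, b]^* J$ ordinarily commute rather than anticommute. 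The $\eps$-twisted modification built into $[\Dtilde, b^\op] = i J[\D, b^*] J \eps$ is precisely designed to correct this sign, and the required boundedness and vanishing are the content of Lemmas~\ref{lm:bounded-acomm} and~\ref{lm:anti-comm}.
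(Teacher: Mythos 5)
Your proposal is correct and follows essentially the same route as the paper: you build the $\eps$-twisted representation of $\CDA^\op$ through $\Dtilde$ (exactly the paper's isomorphism $\al$ onto $\Cc_{\Dtilde}(\A^\op)$), obtain bounded graded commutators from Lemmas~\ref{lm:anti-comm} and~\ref{lm:bounded-acomm} together with the $\A$-linearity/Proposition~\ref{pr:lin-bdd} argument for the even generators $Jb^*J$, and pass to the bounded transform. The only differences are cosmetic: the paper does not quote Baaj--Julg but checks compactness of $[F_\D,y]_\pm$ directly via the Carey--Phillips resolvent identities (thereby also getting $\L^s$-membership for $s>p$), and your justification for extending $\pi$ to the $C^*$-closure should rest on norm-continuity of the twist $w \mapsto w_+ + w_-(-i\eps)$ (the grading being implemented by $\mathrm{Ad}(\eps)$, since $[J,\eps]=0$) rather than on automatic contractivity, as $\CDA^\op$ is not itself a $C^*$-algebra.
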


\begin{proof}
The bounded selfadjoint operator $F_\D = \D\,\Dreg^{-1}$ anticommutes
with the grading $\eps$. Also, $1 - F_\D^2 = (1 + \D^2)^{-1}$ is
compact, so we need only check that the graded commutators of~$F_\D$
with elements of $\A \ox \CDA^\op$ are compact. If instead we consider
an element $y \in \A \ox \Cc_{\Dtilde}(\A^\op)$, we get
$$
[F_\D, y]_\pm = [\D, y]_\pm \,\Dreg^{-1} + \D\,[\Dreg^{-1}, y]_\pm.
$$
The first summand is compact by the compactness of $\Dreg^{-1}$ and
the boundedness of graded commutators with~$\D$, which follows from
Lemma~\ref{lm:bounded-acomm}. The second term requires more care. The
argument of Lemma~2.3 of~\cite{CareyP1}, lightly adjusted to handle
graded commutators, shows that if $[\D,y]_\pm$ is bounded, then for
any real $t > 0$, we get
$$
[y, (t + \D^2)^{-1}]_\pm
= \D(t + \D^2)^{-1} [\D, y]_\pm (t + \D^2)^{-1}
\mp (t + \D^2)^{-1} [\D, y]_\pm \D(t + \D^2)^{-1}.
$$
Now we can employ \cite[Prop.~2.4]{CareyP1} to show that
$\D[(1 + \D^2)^{-1/2}, y]_\pm$ is compact, and in fact lies in
$\L^s(\H)$ for all $s > p$.

It remains to show that $C^\op$ is isomorphic to the norm closure of
$\Cc_{\Dtilde}(\A^\op)$. We define an algebra homomorphism
$\al\: \CDA^\op \to \Cc_{\Dtilde}(\A^\op)$ on generators by
$$
\al(a^\op) := a^\op,  \qquad
\al\bigl( [\D,a]^\op \bigr) := [\D,a]^\op(-i\eps) = [\Dtilde, a^\op].
$$
One checks that $\al$ is $*$-preserving on generators, so it extends
to a $*$-isomorphism between the norm closures of $\CDA^\op$ and
$\Cc_{\Dtilde}(\A^\op)$. Thus we regard the action of $\CDA^\op$ as
being via the representation in $\Cc_{\Dtilde}(\A^\op)$.
\end{proof}

\begin{eg} 
\label{eg:Kasparov-class}
In the classical case of the Clifford algebra acting on the left of
the differential forms over a closed $C^\infty$ manifold $M$, the
$\Z_2$-graded spectral triple constructed in
Proposition~\ref{pr:fun-class} yields precisely Kasparov's fundamental
class.
\end{eg}

\begin{rmk} 
\label{rk:something-else}
Given $a \in \A$, the operator
$F_\D\,[\Dtilde,a^\op] + [\Dtilde,a^\op]\,F_\D$ is compact. Using
$[\Dtilde, a^\op] = -i[\D,a]^\op \eps$ and $F_\D \eps = -\eps F_\D$,
we find that the operator
$-i\bigl( F_\D\,[\D,a]^\op - [\D,a]^\op\,F_\D \bigr)\eps$ is also
compact. This mirrors exactly what we see in the classical case, and
in that context may be traced to the commuting of the principal
symbols of the pseudodifferential operators $F_\D$ and $[\D,a]^\op$.
\end{rmk}

\addtocontents{toc}{\vspace{-6pt}}

\section{The main theorems: from spin$^c$ to Riemannian and back} 
\label{sec:main-thms}

The existence of the fundamental class $\la$ for a noncommutative
Riemannian manifold allows us to ask about Poincar\'e duality in this
picture. In the spin$^c$ and spin settings, Poincar\'e duality has
been considered as one of the defining, or at any rate desirable,
properties of noncommutative manifolds
\cite{Book,ConnesGrav,ConnesRecon,Crux}.

For noncommutative spin$^c$ manifolds, this duality has the following
formulation. As before, $A$ and $C$ will denote the $C^*$-algebras
obtained by taking the respective norm closures of $\A$ and
$\Cc = \CDA$.

\begin{cond}[Spin$^c$ Poincar\'e duality] 
\label{cn:spinc-pdual}
The class $\mu \in KK^p(A \ox A^\op, \C)$ of the $p$-dimensional
noncommutative spin$^c$ manifold $(\A,\H,\D,\cc)$ determines for each
$j = 0,1$ an isomorphism
$$
- \ox_A \mu : KK^j(\C,A) \to KK^{j+p}(A^\op,\C) \isom KK^{j+p}(A,\C).
$$
\end{cond}

In the Riemannian but not necessarily spin$^c$ setting, we may
formulate it as follows.

\begin{cond}[Riemannian Poincar\'{e} duality] 
\label{cn:riem-pdual}
The class $\la \in KK(A\ox C^\op, \C)$ of the noncommutative
Riemannian manifold $(\A,\H,\D,\cc,\Phi)$ defines two isomorphisms for
each $j = 0,1$:
$$
- \ox_{C^\op} \la : KK^j(\C, C^\op) \to KK^j(A, \C),  \qquad
- \ox_A \la : KK^j(\C, A) \to KK^j(C^\op, \C).
$$
\end{cond}

We now state and prove two theorems that produce the desired 
isomorphisms in each case.

To handle the even and odd cases together, we adopt the following
notational convention. The algebra $\Cliff_1^p$ denotes $\Cliff_1$ if
$p$ is an odd integer and $\C$ if $p$ is even. Similarly, the vector
space $\C^2_p$ will denote $\C^2$ if $p$~is odd and $\C$ if $p$~is
even. Thus
$$
\A \ox \Cliff_1^p = \begin{cases}
\A \ox \Cliff_1 &\text{for $p$ odd},  \\ 
\A &\text{for $p$ even},  \end{cases}  \qquad
\H \ox \C^2_p \isom \begin{cases}
\H \ox \C^2  &\text{for $p$ odd},  \\ 
\H  &\text{for $p$ even}.  \end{cases}
$$
We also observe that the transpose map gives a $*$-isomorphism from
$\Cliff_1 \isom M_2(\C)$ to its opposite. This notation will
streamline our discussion of the odd and even cases. To better
identify the classes obtained from Kasparov products, some details 
about odd Kasparov classes are laid out in Appendix~\ref{app:KK-odd}.

\begin{thm} 
\label{th:spin-riem}
Let $(\A,\H,\D,\cc)$ be a $p$-dimensional noncommutative spin$^c$
manifold with Kasparov class
$\mu \in KK^p(A\ox A^\op,\C) \isom KK(A\ox A^\op \ox \Cliff_1^p,\C)$.
Regard the conjugate module $(\H_\infty \ox \C^2_p)^\3$ as an
$(\A \ox \Cliff_1^p)$-$\CDA$-bimodule, graded in odd spectral
dimensions, with class $\sg \in KK(C^\op, A^\op \ox \Cliff_1^p)$. Then
$\la := \sg \ox_{A^\op\ox\Cliff_1^p} \mu \in KK(A \ox C^\op, \C)$ is
the class of a noncommutative Riemannian manifold. If $\mu$ satisfies
spin$^c$ Poincar\'e duality, then $\la$ satisfies Riemannian
Poincar\'e duality.
\end{thm}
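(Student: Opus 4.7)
The approach is to construct the Riemannian spectral triple explicitly as an unbounded Kasparov product of the spin$^c$ spectral triple with the bimodule $\E := (\H_\infty \ox \C^2_p)^\3$, using the twisting machinery of Section~\ref{sec:bimod-connections}. The spin$^c$ Condition~\ref{cn:pmorita} together with Proposition~\ref{pr:pre-Morita} guarantees that $\H_\infty^\3$ is finitely generated and projective as an $\A$-$\CDA$-bimodule, so choosing a $\CDA$-compatible connection on $\E$ and applying Theorem~\ref{th:nice-product} produces a candidate spectral triple $(\A \ox \Cc^\op, \H \ox_{\A^\op} \E, \Dhat)$; by Proposition~\ref{pr:married-triple} this represents exactly the class $\sg \ox_{A^\op \ox \Cliff_1^p} \mu = \la$.

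Next I would verify the Riemannian axioms for this twisted triple. Regularity (Condition~\ref{cn:qc-infty}), spectral dimension~$p$ (Condition~\ref{cn:spec-dim}) and $\Zz_p$-summability transfer directly from Theorem~\ref{th:nice-product} and Proposition~\ref{pr:summa}. Orientability (Condition~\ref{cn:orient}) is inherited via the spin$^c$ orientation cycle~$\cc$: equation~\eqref{eq:si-beag-si-mor} gives $[\Dhat, a] = [\D, a] \ox \eps'$, so $\pi_{\Dhat}(\cc) = \pi_\D(\cc) \ox \eps'$ is still selfadjoint, squares to~$1$, and commutes appropriately with $\Dhat$ and~$\A$. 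For Condition~\ref{cn:Riemann}, the crucial ingredient is the Morita-theoretic identification $\H_\infty \ox_\A \H_\infty^\3 \isom \Cc$, under which the Hilbert space $\H \ox_\A \H_\infty^\3$ identifies with $L^2(\Cc,\psi_\om)$ (tensored with $\C^2$ in odd dimensions). The distinguished vector $\Phi$ is taken to correspond to $1 \in \Cc$; then $\Phi$ is cyclic for the left action of $\Cc$ on $L^2(\Cc,\psi_\om)$ and separating by faithfulness of $\psi_\om$ (Lemma~\ref{lm:interchange}), while $z = \pairL_\Cc(\Phi|\Phi) = 1$ is trivially central and strictly positive.

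For the Poincar\'e duality implication, the key observation is that $\sg$ is \emph{KK-invertible}: the bimodule $(\H_\infty \ox \C^2_p)^\3$ combines the pre-Morita equivalence between $\Cc$ and $\A$ with the standard Morita equivalence between $\Cliff_1$ and~$\C$ (via~$\C^2$) in odd dimensions, so its $C^*$-completion is a Morita equivalence bimodule between $A^\op \ox \Cliff_1^p$ and $C^\op$, giving a KK-equivalence. Using associativity of the Kasparov product one has
$$
- \ox_{C^\op} \la = (- \ox_{C^\op} \sg) \ox_{A^\op \ox \Cliff_1^p} \mu,
\qquad
- \ox_A \la = \sg \ox_{A^\op \ox \Cliff_1^p} (- \ox_A \mu).
$$
In each identity the factor involving $\sg$ is an isomorphism by KK-invertibility, and the factor involving $\mu$ is an isomorphism by the spin$^c$ Poincar\'e duality hypothesis; hence both Riemannian Poincar\'e duality maps of Condition~\ref{cn:riem-pdual} are isomorphisms. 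The main obstacle will be the detailed verification of Condition~\ref{cn:Riemann}: identifying the new Clifford algebra $\mathcal{C}_{\Dhat}(\A)$ acting on $L^2(\Cc,\psi_\om)$ and confirming that $\Phi$ is cyclic for this possibly larger algebra, together with the careful bookkeeping of the auxiliary $\C^2$ and $\Cliff_1$ factors needed to handle the $\Z_2$-grading in the odd-dimensional case.
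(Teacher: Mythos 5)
Your overall architecture is the same as the paper's: twist the spin$^c$ triple by the conjugate module using Theorem~\ref{th:nice-product}, identify the resulting class as $\sg \ox_{A^\op\ox\Cliff_1^p}\mu$ via Proposition~\ref{pr:married-triple}, identify $\H_\infty \ox_\A \H_\infty^\3$ with $\CDA$ so that the new Hilbert space is $L^2(\CDA,\psi_\om)$, take $\Phi$ to be the image of $1$ (so $z = \pairL_\Cc(\Phi|\Phi) = 1$), and obtain Riemannian Poincar\'e duality from the $KK$-invertibility of $\sg$ together with associativity/commutation of Kasparov products. All of that matches the paper.

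The genuine gap is in the verification of Condition~\ref{cn:Riemann}, precisely the part you defer as ``the main obstacle'': that condition requires a grading operator $\eps$ on the new Hilbert space with $\eps\Phi = \Phi$ and $\eps\Dhat = -\Dhat\eps$, and nothing in the machinery you cite produces it. The orientation operator $\Chat = \pi_{\Dhat}(\cc)$ cannot serve, since $\Chat\Phi \neq \Phi$ in general (classically $\Chat$ is left Clifford multiplication by the chirality on forms, not the parity grading of forms); the paper's proof supplies the missing idea by setting $\eps := \Chat J \Chat J$, with $J$ the Tomita conjugation of the cyclic vector, and verifies $J\Chat J\,\Dhat = \Dhat\, J\Chat J$ and $\Chat\Dhat = -\Dhat\Chat$ using the explicit form $\Dhat = q^\op(\D \ox 1_m)q^\op + q^\op(M \ox 1_m)q^\op$ from Lemma~\ref{lm:big-dee}. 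Your orientability claim also needs an argument for the connection-form part of $\Dhat$ (it works because $C$ anticommutes with operators of the form $[\D,a^\op]\,b^\op$, but this must be said; also $\pi_{\Dhat}(\cc) = C \ox (\eps')^p$, which is $C\ox 1$ for $p$ even, not $C\ox\eps'$). Two further points: the compatibility \eqref{eq:abs-cont-bis} of your chosen $\CDA$-valued pairing with the scalar product is not automatic --- the paper proves it (Lemma~\ref{lm:affair-of-state} and \eqref{eq:scalar-2}) by switching to the right $\A$-valued pairing via Proposition~\ref{prop:spin-c-works}; and your appeal to Lemma~\ref{lm:interchange} for faithfulness of $\psi_\om$ is circular here, since that lemma assumes Condition~\ref{cn:Riemann} --- in the spin$^c$ setting faithfulness is part of Condition~\ref{cn:finite}. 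Finally, the odd-dimensional case, which in the paper requires passing to the doubled graded triple of Appendix~\ref{app:KK-odd} with $\Chat'$ and $J \oplus J$, is left entirely untreated.
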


\begin{thm} 
\label{th:riem-spin}
Let $(\A,\H,\D,\cc,\Phi)$ be a noncommutative Riemannian manifold and
let $\la \in KK(A \ox C^\op, \C)$ be its Kasparov class. Let $\E$ be a
pre-Morita equivalence bimodule between
either $\CDA$ or $\CDA^+$ (according to parity of the spectral 
dimension) and~$\A$, with class
$\tau := [\E \ox \C^2_p] \in KK(A^\op \ox \Cliff_1^p, C^\op)$. Then 
$\mu := \tau \ox_{C^\op} \la \in KK(A \ox A^\op \ox \Cliff_1^p, \C)$
is the class of a noncommutative spin$^c$ manifold. If $\la$ satisfies
Riemannian Poincar\'e duality, then $\mu$ satisfies spin$^c$
Poincar\'e duality.
\end{thm}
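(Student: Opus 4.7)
The plan is to represent the Kasparov product $\mu = \tau \ox_{C^\op} \la$ explicitly as a spectral triple via the bimodule-twisting machinery of Section~\ref{ssc:kas-prods}, then verify Conditions~\ref{cn:qc-infty}--\ref{cn:pmorita}, and finally deduce spin$^c$ Poincar\'e duality by associativity. To construct the representative, I would apply Proposition~\ref{pr:married-triple} to the Riemannian spectral triple $(\A \ox \CDA^\op, \H, \D)$ representing $\la$ (using that by Lemma~\ref{lm:grading-Riemann}(e) and Condition~\ref{cn:Riemann} the module $\H_\infty$ is finite projective over both $\A$ and $\CDA$), with $\E$ playing the role of the $\B$-$\Cc$-bimodule there, where $\B = \CDA$ (or $\CDA^+$ in odd dimensions) and $\Cc = \A$. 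This yields the spectral triple $(\A \ox \A^\op, \H \ox_{\CDA} \E, \Dhat)$ representing $\mu$; in the odd case the $\Z_2$-decomposition $\CDA = \CDA^+ \oplus \CDA^-$ of Corollary~\ref{cr:odd-big-algebra} restricts the construction to $P_+\H$, while the factor $\C^2_p$ in~$\tau$ supplies the $\Cliff_1$-graded doubling needed to represent the odd class in $KK(A \ox A^\op \ox \Cliff_1, \C)$.

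Next I would verify the spin$^c$ conditions on the twisted triple. Regularity for the $\A$-action and $\Zz_p$-summability of the same spectral dimension~$p$ follow from Theorem~\ref{th:nice-product} and Proposition~\ref{pr:summa}. Equation~\eqref{eq:si-beag-si-mor} gives $[\Dhat,a] = [\D,a] \ox \eps'$, so $\mathcal{C}_{\Dhat}(\A)$ is the image of $\CDA$ on $\H \ox_{\CDA} \E$. Condition~\ref{cn:pmorita} then follows by composing two pre-Morita equivalences: Lemma~\ref{lm:grading-Riemann}(e) exhibits $\H_\infty$ as a $\CDA$-$\CDA$ pre-Morita equivalence and $\E$ is a $\CDA$-$\A$ pre-Morita equivalence by hypothesis, so $\H_\infty \ox_{\CDA} \E$ (which via $w\Phi \ox e \mapsto we$ is naturally isomorphic to~$\E$) is a $\mathcal{C}_{\Dhat}(\A)$-$\A$ pre-Morita equivalence. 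Finiteness and Absolute Continuity (Condition~\ref{cn:finite}) then follow from Corollary~\ref{cr:op-weight}, using the trace $\psi_\om$ pulled back through the operator-valued weight produced by the composed Morita equivalence.

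The orientability Hochschild cycle $\cc'$ for the new triple must be built from $\cc$ and the Morita data of $\E$; the most direct attempt is to reuse $\cc$ itself, giving $\pi_{\Dhat}(\cc) = \pi_\D(\cc) \ox (\eps')^p = C \ox (\eps')^p$ on $\H \ox_{\CDA}\E$, and then show that this coincides with the grading $\eps \ox \eps'$ in the even case (via $\eps = CC^\op$ from Lemma~\ref{lm:grading-Riemann}(b)) or is a central element commuting with $\A$ and anticommuting with $\Dhat$ in the odd case (via $C = JCJ$ from Lemma~\ref{lm:grading-Riemann}(c)). The \emph{main obstacle} is precisely this orientability step, because the transport of $C$ through the balanced tensor product, and its conversion from the $J$-twisted right action on $\H$ to the $\A$-valued Morita structure on $\E$, must be tracked carefully; if the bare cycle $\cc$ does not suffice, one must enrich it by Morita-equivalence generators $\{y_i,x_i\} \subset \E$ with $\sum_i \pairL_{\CDA}(y_i|x_i) = 1$ to obtain a Hochschild cycle in $Z_p(\A, \A \ox \A^\op \ox \Cliff_1^p)$ whose representation matches the required grading.

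Finally, the Poincar\'e duality assertion follows purely formally. Since $\E$ is a Morita equivalence bimodule, the class $\tau$ is a $KK$-equivalence, so $\tau \ox_{C^\op} - : KK^j(C^\op,\C) \to KK^j(A^\op \ox \Cliff_1^p,\C)$ is an isomorphism. By associativity of the Kasparov product,
$$
x \ox_A \mu = x \ox_A (\tau \ox_{C^\op} \la) = \tau \ox_{C^\op} (x \ox_A \la),
$$
so $- \ox_A \mu$ factors as the Riemannian Poincar\'e duality isomorphism $-\ox_A\la : KK^j(\C,A) \to KK^j(C^\op,\C)$ followed by the $KK$-equivalence $\tau \ox_{C^\op} -$. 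Under the standard isomorphism $KK^j(A^\op \ox \Cliff_1^p,\C) \isom KK^{j+p}(A,\C)$, this is exactly the spin$^c$ Poincar\'e duality isomorphism required by Condition~\ref{cn:spinc-pdual}.
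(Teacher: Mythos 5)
Your overall route is the paper's own: represent $\mu$ by twisting the fundamental-class module of Proposition~\ref{pr:fun-class} by $\E$ (so that $\H_\infty \ox_{\CDA} \E \isom \E$ via $w\Phi \ox e \mapsto we$), verify the spin$^c$ conditions for the twisted triple, and obtain Poincar\'e duality from associativity together with the $KK$-equivalence $\tau$; that last step matches the paper essentially verbatim. The genuine gap is exactly where you flag it, and your proposed resolutions do not work as stated. In the even case there is no grading $\eps'$ on $\E$ to tensor with: $\E$ is a Morita equivalence between $\CDA$ and the \emph{trivially graded} $\A$, and even $\eps \ox 1$ fails to descend to $\H \ox_{\CDA} \E \isom \H^n q$, because the projector $q \in M_n(\CDA)$ presenting $\E \isom \CDA^n q$ need not be $\eps$-even. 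So the proposed identification of $\pi_{\Dhat}(\cc)$ with ``$\eps \ox \eps'$'' has no meaning on the twisted module. The paper's fix is to abandon $\eps$ altogether and take $\Chat := \pi_{\Dhat}(\cc) = C \ox 1$, which is well defined because $C$ acts through the left $\CDA$-action, and to check $\Chat\Dhat = -\Dhat\Chat$ directly from the presentation $\Dhat = q^\op(\D \ox 1)q^\op + q^\op(M \ox 1)q^\op$ with $M$ right $\A$-linear; by Remark~\ref{rk:volume-grading} this selfadjoint unitary is itself the grading an even spin$^c$ manifold requires, so no comparison with a pre-existing grading is needed. Your fallback of enriching $\cc$ to a cycle in $Z_p(\A, \A \ox \A^\op \ox \Cliff_1^p)$ would in any case fall outside Condition~\ref{cn:orient}, which demands $\cc \in Z_p(\A,\A)$ (cf.\ Remark~\ref{rk:Hoch-cycle}).

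The second gap is Condition~\ref{cn:finite} for the new triple: Corollary~\ref{cr:op-weight} is a statement about the original Riemannian triple $(\A,\H,\D)$ and does not deliver finiteness and absolute continuity for $(\A, \H \ox_\Cc \E, \Dhat)$, with $\Cc = \CDA$ or $\CDA^+$. What is needed, and what the paper supplies, is an explicit left $\CDA$-valued pairing on $\H_\infty \ox_\Cc \E \isom \E$, namely $\pairL_\Cc(\Phi \ox e_1|\Phi \ox e_2) := z\,\Theta_{e_1,e_2}$ with $z = \pairL_\Cc(\Phi|\Phi)$ central, together with the verification that it reproduces the Hilbert space scalar product through $\psi_\om$; that computation uses the Tomita conjugation, via $\scal<\Phi|\Phi\,\Theta_{e_1,e_2}> = \scal<\Theta_{e_2,e_1}\Phi|\Phi>$, and the centrality of $z$, and this is precisely where the Riemannian data enters; it cannot be waved through an abstract operator-valued-weight argument. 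Your odd-case sketch points in the right direction but should be completed as in the paper: twist over $\Cc = \CDA^+$, use $\eps C = -C\eps$ to split $\H \ox_\Cc \E \isom \H^+ \ox_\Cc \E \ox \C^2$ with $\Cliff_1 = \C\langle 1,\eps \rangle$ acting on the $\C^2$ factor, write $\Dhat$ as $\mathrm{diag}(\Dhat',-\Dhat')$, and exhibit $(\A, \H^+ \ox_\Cc \E, \Dhat')$ as the odd spin$^c$ triple via $\H^+ \ox_\Cc \E \isom \E$.
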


\begin{rmk} 
\label{rk:bait-and-switch}
As in Proposition~\ref{pr:married-triple}, a few left-right issues
must be addressed in order to formulate the Kasparov product
correctly. However, just as in that Proposition, we can unpack the
definitions to find that, for example,
$$
\E \ox_{C^\op} \H_\infty \isom \H_\infty \ox_C \E,
$$
where we regard $\E$ on the left hand side as a right $C^\op$-module
and on the right hand side as a left $C$-module.
\end{rmk}

\subsection{Proof of Theorem \ref{th:spin-riem}} 

We prove the even case first; the odd case needs only a few 
modifications, to be discussed later.

We begin with the noncommutative spin$^c$ manifold $(\A, \H,\D,\cc)$
and the pre-Morita equivalence bimodule $\H_\infty$ between $\CDA$
and~$\A$ given by the spin$^c$ condition. As in
Proposition~\ref{prop:spin-c-works}, we identify $\H_\infty = q\A^m$.
The conjugate module $\H_\infty^\3$ gives a pre-Morita equivalence
between $\A$ and $\CDA \isom q M_m(\A) q$.

Theorem~\ref{th:nice-product} tells us that we may form the spectral
triple $(\A, \H \ox_\A \H_\infty^\3, \Dhat) = (\A, \H^m q, \Dhat)$,
which satisfies the first-order condition, as well as Conditions
\ref{cn:qc-infty}, \ref{cn:spec-dim} and~\ref{cn:finite}.

Observe that Proposition~\ref{pr:summa} implies that
$$
\Dhatreg^{-p} = q^\op (\Dreg^{-p} \ox 1_m) q^\op + B,
\word{where}  B \in \L^{1,\infty}_0(\H^m q).
$$
To see that $B$ has vanishing Dixmier trace, we write
$\D'_m = q^\op (\D \ox 1_m) q^\op$, so that $\Dhat = \D'_m + \Ahat$
with $\Ahat$ bounded, by Lemma~\ref{lm:big-dee}. Using
\eqref{eq:inverse-perturbed}, we see that
$(i + \Dhat)^{-1} \equiv (i + \D'_m)^{-1} \bmod \Zz_{p/2}$, and
thus $\Dhatreg^{-1} \equiv \reg{\D'_m}^{-1} \bmod \Zz_{p/2}$.

The operator trace over $\H^m q$ is
$\Tr_\H \ox \tr_m(q^\op(\cdot)q^\op)$ with $\tr_m$ denoting a matrix
trace. The left action of $\CDA$ commutes with $q^\op$; thus, if $w$
is an even element of~$\CDA$, we get an equality of Dixmier traces:
\begin{equation}
\Trw^{\H^m q} \bigl( w\,\Dhatreg^{-p} \bigr)
= \Trw^{\H^m q} \bigl( w q^{\op} \,\Dreg^{-p} \bigr)
= \Trw^\H \bigl( w\tr_m(q^\op)\,\Dreg^{-p} \bigr).
\label{eq:non-comm-int} 
\end{equation}

We must now show that the new spectral triple satisfies
Condition~\ref{cn:Riemann}. The spin$^c$ condition, namely that
$\H_\infty$ is a pre-Morita equivalence bimodule between $\CDA$
and~$\A$, shows that there are finitely many vectors
$\xi_j,\eta_j \in \H_\infty$ such that 
\begin{equation}
\Theta_{\xi_1,\eta_1} +\cdots+ \Theta_{\xi_m,\eta_m} = 1 \in \CDA.
\label{eq:cyclic-1} 
\end{equation}
Consider the vector $\Phi \in \H \ox_\A \H_\infty^\3$ defined by
$$
\Phi := \xi_1 \ox \eta_1^\3 +\cdots+ \xi_m \ox \eta_m^\3.
$$
We claim that $\Phi$ is an algebraically cyclic vector for~$\CDA$,
and that the vector state
$\sg_\Phi \: w \mapsto \scal<\Phi|w\Phi>$ is of the form
$$
\sg_\Phi(w) = \Tr_\om^{\H^mq}(wz\Dhatreg^{-p})
$$
for a central element $z \in \CDA$.

Under the isomorphism
$\La \: \Theta_{\xi,\eta} \mapsto \xi \ox \eta^\3
: \CDA \to \H_\infty \ox_\A \H_\infty^\3$, the vector $\Phi$ is just 
the image $\La(1)$ of the unit element of~$\CDA$. Note that 
$$
1 = \sum_k \Theta_{\xi_k,\eta_k} = 1^2
= \sum_{j,k} \Theta_{\xi_k \pairR(\eta_k|\xi_j)_\A, \eta_j}
= \sum_{j,k} \Theta_{\xi_k, \eta_j \pairR(\xi_j|\eta_k)_\A}
$$
so that $\eta_k = \sum_j \eta_j \pairR(\xi_j|\eta_k)_\A$ for each~$k$.
Moreover, if $w = \sum_{i,k} \Theta_{\xi_i a_i, \eta_k b_k} \in \CDA$,
then
\begin{align}
w\Phi 
&= \sum_{i,j,k} \xi_i a_i\,\pairR(\eta_k b_k|\xi_j)_\A \ox \eta_j^\3
= \sum_{i,j,k} \xi_i a_i b_k^*\,\pairR(\eta_k|\xi_j)_\A \ox \eta_j^\3
\nonumber \\
&= \sum_{i,k} \xi_i a_i b_k^* \ox \eta_k^\3
= \sum_{i,k} \xi_i a_i \ox (\eta_k b_k)^\3 = \La(w).
\label{eq:both-sides} 
\end{align}
Thus $w \mapsto w\Phi = \La(w)$ maps $\CDA$ \textit{onto} 
$\H_\infty \ox_\A \H_\infty^\3$, so that $\Phi$ is algebraically 
cyclic; and it is separating as well, since the pre-Morita 
equivalence implies that $\La$ is bijective.

Using Proposition \ref{prop:spin-c-works}, the scalar product on the
Hilbert space $\H \ox_\A \H_\infty^\3$ is given on the dense subspace
$\H_\infty \ox_\A \H_\infty^\3$ by
\begin{align}
\scal< \xi \ox \eta^\3 | \zeta \ox \rho^\3 >
&:= \bigl< \xi\, \pairL_\A(\eta^\3 | \rho^\3) \bigm| \zeta \bigr>
= \bigl< \xi\, \pairR(\eta|\rho)_\A \bigm| \zeta \bigr>
\nonumber \\
&= \Trw\bigl( \pairR(\xi {\pairR(\eta|\rho)_\A} | \zeta)_\A
\,\Dreg^{-p} \bigr)
= \Trw\bigl( \pairR(\rho|\eta)_\A \pairR(\xi|\zeta)_\A \,\Dreg^{-p}
\bigr),
\label{eq:scalar-1} 
\end{align}
for $\xi,\eta,\zeta,\rho \in \H_\infty$.

\begin{lemma} 
\label{lm:affair-of-state}
Evaluation of the vector state $\sg_\Phi$ on the operator
$\Theta_{\rho,\tau} \ox 1$ in $\CDA \ox 1$ (acting on
$\H \ox \H_\infty^\3$) yields
$$
\sg_\Phi(\Theta_{\rho,\tau} \ox 1)
\equiv \scal<\Phi | (\Theta_{\rho,\tau} \ox 1) \Phi>
= \Trw^\H\bigl( \pairR(\tau|\rho)_\A \,\Dreg^{-p} \bigr).
$$
Moreover, $\sg_\Phi$ is tracial on~$\CDA$.
\end{lemma}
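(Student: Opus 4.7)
The plan is to carry out a direct computation using the explicit form of $\Phi = \sum_k \xi_k \ox \eta_k^\3$ together with the scalar product formula \eqref{eq:scalar-1} and the two ``resolution of identity'' relations
$\sum_k \xi_k \pairR(\eta_k|x)_\A = x$ and $\sum_k \eta_k \pairR(\xi_k|x)_\A = x$
coming from $\sum_k \Theta_{\xi_k,\eta_k} = 1 = \sum_k \Theta_{\eta_k,\xi_k}$ in $\CDA$ (the second follows from the first by taking adjoints, since $\Theta_{\xi,\eta}^* = \Theta_{\eta,\xi}$ in any hermitian module).

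First I would expand $(\Theta_{\rho,\tau} \ox 1)\Phi = \sum_k \rho \pairR(\tau|\xi_k)_\A \ox \eta_k^\3$ and apply the formula \eqref{eq:scalar-1} to each term $\scal<\xi_j \ox \eta_j^\3 \mid \rho \pairR(\tau|\xi_k)_\A \ox \eta_k^\3>$. Using the conjugate-module identity $\pairL_\A(\eta_j^\3|\eta_k^\3) = \pairR(\eta_j|\eta_k)_\A$ and the sesquilinearity $\pairR(\xi a|\eta b)_\A = a^* \pairR(\xi|\eta)_\A b$, the summand becomes
$$
\Trw\bigl( \pairR(\eta_k|\eta_j)_\A \pairR(\xi_j|\rho)_\A \pairR(\tau|\xi_k)_\A \,\Dreg^{-p} \bigr).
$$
Now use the tracial property of $\Trw(\cdot\,\Dreg^{-p})$ on $\A$ (which holds because $(\A,\H,\D)$ is $QC^\infty$ and $\Zz_p$-summable) to cyclically rotate, then sum over $k$, collapsing $\sum_k \xi_k \pairR(\eta_k|\eta_j)_\A = \eta_j$ inside $\pairR(\tau|\cdot)_\A$. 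The result for fixed $j$ is $\Trw\bigl(\pairR(\tau|\eta_j)_\A \pairR(\xi_j|\rho)_\A\,\Dreg^{-p}\bigr)$. Then summing over $j$ and factoring $\pairR(\tau|\cdot)_\A$ lets us apply the second identity $\sum_j \eta_j \pairR(\xi_j|\rho)_\A = \rho$, yielding precisely $\Trw\bigl(\pairR(\tau|\rho)_\A\,\Dreg^{-p}\bigr)$.

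For the traciality claim, I would argue on generators: since every element of $\CDA$ is a (norm limit of) sums of operators $\Theta_{\rho,\tau}$, it suffices to verify $\sg_\Phi(\Theta_{\rho_1,\tau_1}\Theta_{\rho_2,\tau_2}) = \sg_\Phi(\Theta_{\rho_2,\tau_2}\Theta_{\rho_1,\tau_1})$. Using $\Theta_{\rho_1,\tau_1}\Theta_{\rho_2,\tau_2} = \Theta_{\rho_1 \pairR(\tau_1|\rho_2)_\A,\tau_2}$ and the first part of the lemma, both sides equal $\Trw\bigl(\pairR(\tau_2|\rho_1)_\A \pairR(\tau_1|\rho_2)_\A\,\Dreg^{-p}\bigr)$ up to cyclic rotation of the two $\A$-valued factors, which is allowed again by the traciality of $\Trw(\cdot\,\Dreg^{-p})$ on~$\A$.

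The only mild obstacle is bookkeeping: one must keep clear which side of the bimodule the conjugate pairing lives on, and pay attention to the fact that while $\Trw(\cdot\,\Dreg^{-p})$ is tracial on $\A$, the elements $\pairR(\tau|\eta_j)_\A$ and $\pairR(\xi_j|\rho)_\A$ do not commute individually --- so one cannot simply reorder products inside a finite sum, but only inside the trace. Once this is respected, the two Morita ``partition of unity'' identities do all the work.
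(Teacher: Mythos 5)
Your proposal is correct and follows essentially the same route as the paper: expand $\Phi$ in terms of the frame $\{\xi_k,\eta_k\}$, apply the scalar-product formula \eqref{eq:scalar-1}, collapse the two sums with the partition-of-unity identity \eqref{eq:cyclic-1} (the paper collapses the second sum by applying $\sum_j\Theta_{\xi_j,\eta_j}=1$ in the antilinear slot to $\tau$, while you use the adjointed identity $\sum_j\Theta_{\eta_j,\xi_j}=1$ on $\rho$ --- a trivial bookkeeping variation), and invoke the traciality of $\psi_\om$ exactly where the paper does. The traciality-of-$\sg_\Phi$ argument via $\Theta_{\rho_1,\tau_1}\Theta_{\rho_2,\tau_2}=\Theta_{\rho_1\pairR(\tau_1|\rho_2)_\A,\tau_2}$ and cyclicity of $\Trw(\cdot\,\Dreg^{-p})$ is likewise the paper's argument.
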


\begin{proof}
We first recall from Proposition \ref{prop:spin-c-works} that we can
use the right $\A$-valued inner product on $\H_\infty$ and $\psi_\om$
to compute the scalar product. The evaluation proceeds as follows, for
$\rho,\tau \in \H_\infty$:
\begin{align*}
\scal<\Phi | (\Theta_{\rho,\tau} \ox 1) \Phi>
&= \sum_{j,k} 
\scal<\xi_j \ox \eta_j^\3 | \Theta_{\rho,\tau} \,\xi_k \ox \eta_k^\3>
= \sum_{j,k} \bigl< \xi_j \ox \eta_j^\3 \bigm|
\rho\, \pairR(\tau|\xi_k)_\A \ox \eta_k^\3 \bigr>
\\
&= \sum_{j,k} \bigl< \xi_j \bigm| \rho\, \pairR(\tau|\xi_k)_\A \,
\pairR(\eta_k|\eta_j)_\A \bigr>
= \sum_{j,k} \bigl< \xi_j \bigm| \rho\, 
\pairR(\tau | \Theta_{\xi_k,\eta_k} \eta_j)_\A \bigr>
\\
&= \sum_j \bigl< \xi_j \bigm| \rho\, \pairR(\tau|\eta_j)_\A \bigr>
= \sum_j \Trw\bigl( \pairR(\xi_j | \rho\,{\pairR(\tau|\eta_j)_\A})_\A
\,\Dreg^{-p} \bigr)
\\
&= \sum_j \Trw\bigl( 
\pairR(\xi_j\, {\pairR(\eta_j|\tau)_\A} | \rho)_\A \,\Dreg^{-p} \bigr)
\\
&= 
\sum_j \Trw\bigl( \pairR(\Theta_{\xi_j,\eta_j}\tau | \rho)_\A
\,\Dreg^{-p} \bigr)
= \Trw\bigl( \pairR(\tau|\rho)_\A \,\Dreg^{-p} \bigr).
\end{align*}
Here we have used Condition~\ref{cn:finite} for the spectral triple
$(\A,\H,\D)$ and the adjointability of the right action of~$\A$.

We can now suppress the tensor factor ${} \ox 1$ in the notation for
the action of $\CDA$ on $\H \ox \H_\infty^\3$. The tracial property of
$\sg_\Phi$ on~$\CDA$ follows at once, since $\CDA$ is spanned by 
finite-rank endomorphisms:
\begin{align*}
\scal<\Phi | \Theta_{\rho,\tau} \Theta_{\al,\bt} \Phi>
&= \scal<\Phi | \Theta_{\rho \pairR(\tau|\al)_\A, \bt} \Phi>
= \Trw\bigl( \pairR(\bt | \rho\,{\pairR(\tau|\al)_\A} )_\A
\,\Dreg^{-p} \bigr)
\\
&= \Trw\bigl( \pairR(\bt|\rho)_\A \, \pairR(\tau|\al)_\A
\,\Dreg^{-p} \bigr)
= \Trw\bigl( \pairR(\tau\,{\pairR(\rho|\bt)_\A} | \al)_\A
\,\Dreg^{-p} \bigr)
\\
&= \Trw\bigl( \pairR(\tau | \al\,{\pairR(\bt|\rho)_\A} )_\A
\,\Dreg^{-p} \bigr)
= \Trw\bigl(\pairR(\tau | \Theta_{\al,\bt}\rho)_\A \,\Dreg^{-p}\bigr)
\\
&= \scal<\Phi | \Theta_{\al,\bt} \Theta_{\rho,\tau} \Phi>.
\tag*{\qedhere}
\end{align*}
\end{proof}

To complete the Riemannian requirements, we need a left-module 
$\CDA$-valued pairing on $\H_\infty \ox \H_\infty^\3$. The obvious 
choice is
$$
\pairL_\Cc({\La(u)}|{\La(v)}) := uv^*,  \word{for all} u,v \in \CDA.
$$
With this choice, $\pairL_\Cc(\Phi|\Phi) = 1$ is certainly central
and strictly positive.

To show that this choice of inner product satisfies the requirements
of Conditions \ref{cn:Riemann} and \ref{cn:finite}, we emulate the
identification of the scalar product on the Hilbert space
$\H \ox_\A \H_\infty^\3$ in~\eqref{eq:scalar-1}. On the dense subspace
$\H_\infty \ox_\A \H_\infty^\3$ we get
\begin{align}
\scal< \xi \ox \eta^\3 | \zeta \ox \rho^\3 >
&:= \bigl< \xi\, \pairL_\A(\eta^\3 | \rho^\3) \bigm| \zeta \bigr>
= \bigl< \xi\, \pairR(\eta|\rho)_\A \bigm| \zeta \bigr>
\nonumber\\
&= \Trw\bigl( \pairL_\Cc(\zeta|\xi {\pairR(\eta|\rho)_\A} )
\,\Dreg^{-p} \bigr)
= \Trw\bigl( \pairL_\Cc(\zeta| \pairL_\Cc(\xi|\eta)\rho) \,\Dreg^{-p}
\bigr)
\nonumber \\
&= \Trw\bigl( \pairL_\Cc(\zeta| \rho) \pairL_\Cc(\eta|\xi)\,\Dreg^{-p}
\bigr)
= \Trw\bigl( \Theta_{\zeta, \rho} \Theta_{\xi,\eta}^*\,\Dreg^{-p}
\bigr),
\label{eq:scalar-2} 
\end{align}
for $\xi,\eta,\zeta,\rho \in \H_\infty$. Thus the scalar product is
the composition of our chosen left $\CDA$-valued inner product and
$\psi_\om$. This gives us the required positivity and faithfulness as
well, since these properties hold for the spin$^c$ manifold
$(\A,\H,\D)$. Also $\H_\infty \ox_\A \H_\infty^\3 \isom \H_\infty^m q$
is finite projective as a left $\A$-module, since $\H_\infty$ is
finite projective by Proposition \ref{prop:spin-c-works}.

The Tomita conjugation $J := J_\Phi$ of~\eqref{eq:Tomita}
satisfies $J(T\Phi) = T^*\Phi$ for $T \in \CDA''$. 
Hence $J$ maps $\H_\infty \ox_\A \H_\infty^\3 = \La(\CDA)$ onto
itself. Indeed, from \eqref{eq:both-sides} one sees that $J$ is 
given on that subspace by $\xi \ox \eta^\3 \mapsto \eta \ox \xi^\3$.

In the even-dimensional case, the spin$^c$ condition gives
$$
\pi_{\Dhat}(\cc) = \pi_\D(\cc) \ox 1 = C \ox 1,
$$
so the Hochschild $p$-cycle is the same for both spectral triples. We 
can abbreviate $\Chat := \pi_{\Dhat}(\cc)$.

We claim that
$\eps := \pi_{\Dhat}(\cc)\, J \pi_{\Dhat}(\cc)J = \Chat J\Chat J$
anticommutes with~$\Dhat$. 

To see that, we use the standard isomorphism
$\psi \: q\A^m \to q(\C^m \ox \A)$ presenting elements of $q\A^m$ as
column vectors. Notice that
$(q(\C^m \ox \A))^\3 \isom (\A \ox \C^m)q$ with row vectors on the
right hand side. Using the standard bases of column vectors $u_i$ and
row vectors $v_i = u_i^T$ for~$\C^m$, this isomorphism is given by
$(q\sum_i u_i\ox a_i)^\3 \mapsto \sum_i (a_i^* \ox v_i)q$.

Since $\A$ and $q = 1_{\H_\infty}$ commute with the grading
$C = \pi_\D(\cc)$ of $\H_\infty \isom q\A^m$, there is a 
$\Z_2$-grading $G$ of $\C^m$ such that 
$\psi \circ C = q (1_\A \ox G) \circ \psi$. Using the 
identifications
$$
\H_\infty \ox_\A (\H_\infty)^\3 \isom (\H_\infty \ox \C^m) q
\isom \H_\infty \ox_\A \psi(\H_\infty)^\3
$$
and writing $\Phi = \sum_j \xi_j \ox_\A \eta_j^\3
= \sum_{i,j} \xi_j(\eta_j^i)^* \ox v_i$, we get
\begin{align*}
J\Chat J\Phi &= \sum_j \xi_j \ox_\A (C\eta_j)^\3 
= \sum_j \xi_j \ox_\A \sum_i (G u_i \ox \eta^i_j)^\3
\\
&= \sum_{i,j} \xi_j \ox_\A ((\eta_j^i)^* \ox v_iG)
= \sum_{i,j} \xi_j(\eta_j^i)^* \ox v_iG.
\end{align*}
The constructions in the proof of Lemma~\ref{lm:big-dee} show that 
$\Dhat$, as an operator on $(\H_\infty \ox \C^m)q$, may be written as
$\Dhat = q^\op(\D \ox 1_m)q^\op + q^\op(M \ox 1_m)q^\op$, where $M$ 
is a right $\A$-linear map on~$\H_\infty$. With that, we compute
$$
\Dhat J \Chat J \Phi
= \sum_{i,j} \D(\xi_j(\eta_j^i)^*) \ox v_iG 
+ \sum_j M\xi_j \ox (C\eta_j)^\3
= J \Chat J \Dhat \Phi.
$$
For a general element $S\Phi$ of $\H_\infty \ox_\A \H_\infty^\3$ 
---using \eqref{eq:both-sides} directly when $S$ is of finite rank---
we may simply replace $\xi_j \ox \eta_j^\3$ by $S(\xi_j \ox \eta_j^\3)$
in the above computations. Thus we find that
$J \Chat J \Dhat = \Dhat J \Chat J$.

Similarly, using $C\D = -\D C$, we find that
$\Chat\Dhat = -\Dhat\Chat$. For $\eps := \Chat J \Chat J$ we arrive at
$\Dhat \eps = -\eps \Dhat$.

Moreover, since $\eps\Phi = \Chat^2\Phi = \Phi$,
Condition~\ref{cn:Riemann} is established in the even case.

In the odd-dimensional case, we first use Appendix~\ref{app:KK-odd} to
replace our odd spectral triple with an even $\Z_2$-graded spectral
triple:
\begin{equation}
\biggl( \A \ox \A^\op \ox \Cliff_1, \H \ox \C^2,
\D' = \twobytwo{\D}{0}{0}{-\D}, \Ga = \twobytwo{0}{1}{1}{0},
\cc \biggr).
\label{eq:even-if-its-odd} 
\end{equation}
Also we take $\H_\infty\ox \C^2$ as a graded right module over
$\A \ox \Cliff_1$. Then the Kasparov product over $A \ox \Cliff_1$ of
the spectral triple \eqref{eq:even-if-its-odd} and 
$(\H_\infty \ox \C^2)^\3$ can be computed similarly to the even case,
and is represented by
$$
\biggl( \A \ox \CDA^\op, \H_\infty \ox_\A \H_\infty^\3 \ox \C^2,
\Dhat' = \twobytwo{\Dhat}{0}{0}{-\Dhat}, \Ga = \twobytwo{0}{1}{1}{0},
\cc \biggr).
$$
The discussion of summability is as in the even case, and we find that 
$\Phi \oplus \Phi$ is cyclic and separating, where
$\Phi := \Lambda(1_\Cc)$ as in the even case. Then
$\Chat' \equiv \pi_{\Dhat'}(\cc) = \twobytwo{\Chat}{0}{0}{-\Chat}$ and
the grading $\eps' := \Chat'(J \oplus J)\Chat'(J \oplus J)$ satisfy
the Riemannian conditions.

Observe that in the odd case, the left action of~$\CDA$ is given by
the algebra $\Cc_{\Dhat'}(\A) = (\CDA \ox 1) \oplus (\CDA \ox 1)$.

\vspace{6pt}

Now assume that the class $\mu \in KK^p(A\ox A^\op,\C)$ of
$(\A,\H,\D)$ satisfies the spin$^c$ version of Poincar\'e duality,
Condition \ref{cn:spinc-pdual}. We recall that the Morita equivalence
bimodule $(\H_\infty \ox \C^2_p)^\3$ defines a class $\sg$ in
$KK(C^\op, A^\op \ox \Cliff_1^p)$. Taking the Kasparov product with
the class $\sg$ gives two isomorphisms
\begin{align*}
- \ox_{C^\op} \sg
&: KK^j(\C, C^\op) \to KK^j(\C, A^\op \ox \Cliff_1^p),  \word{and}
\\
\sg \ox_{A^\op\ox\Cliff_1^p} -
&
: KK^j(A^\op \ox \Cliff_1^p,\C) \to KK^j(C^\op, \C).
\end{align*}
Then, combined with $\mu \in KK(A \ox A^\op \ox \Cliff_1^p, \C)$, we 
get another two isomorphisms
\begin{align*}
- \ox_{C^\op} \sg \ox_{A^\op\ox\Cliff_1^p} \mu
&: KK^j(\C, C^\op) \to KK^j(A, \C),  \word{and}
\\
\sg \ox_{A^\op\ox\Cliff_1^p} - \ox_A \mu
&: KK^j(\C, A) \to 
KK^j(C^\op, \C)
\end{align*}
are isomorphisms. The second follows because 
$$
(\sg \ox_{A^\op\ox\Cliff_1^p} -) \ox_A \mu
= \sg \ox_{A^\op\ox\Cliff_1^p} (- \ox_A \mu)
$$
by associativity of the Kasparov product, and $- \ox_A \mu$ gives an
isomorphism from $KK^j(\C, A) = K_j(A)$ to
$K^{j+p}(A^\op) = KK^j(A^\op \ox \Cliff_1^p, \C)$. Finally, it can be
explicitly checked by unpacking modules isomorphisms as in
Proposition~\ref{pr:married-triple} that
$$
- \ox_A (\sg \ox_{A^\op\ox\Cliff_1^p} \mu)
= \sg \ox_{A^\op\ox\Cliff_1^p} ( - \ox_A \mu).
$$
This shows that the class $\la := \sg \ox_{A^\op\ox\Cliff_1^p}\mu$
satisfies the Riemannian version of Poincar\'e duality,
Condition \ref{cn:riem-pdual}.
This completes the proof of Theorem~\ref{th:spin-riem}.
\qed

\subsection{Proof of Theorem \ref{th:riem-spin}} 

Our starting point here is a noncommutative oriented Riemannian
manifold $(\A,\H,\D,\cc,\Phi)$ and a pre-Morita equivalence bimodule
$\E$ between $\CDA$ and~$\A$ (even case); or between $\CDA^+$ and~$\A$
(odd case).

Proposition~\ref{pr:fun-class} yields an unbounded Kasparov module for 
the $\Z_2$-graded algebra $\A \ox \CDA^\op$.

The composition of pre-Morita equivalences is again a pre-Morita
equivalence, by a variant of \cite[Prop.~3.16]{RaeburnW}
or~\cite[Prop.~4.5]{Lance}. Since $\H_\infty$ is a pre-Morita
equivalence from $\Cc = \CDA$ to itself, the bimodule
$\H_\infty \ox_\Cc \E$ provides a pre-Morita equivalence between $\Cc$
and~$\A$ in the even case. Indeed, since $\H_\infty$ is free of rank
one over~$\Cc$, it is clear that $\H_\infty \ox_\Cc \E \isom \E$ as
$\Cc$-$\A$-bimodules; the isomorphism is given by
$\rho(\Phi \ox e) := e$.

If $p$ is even, the grading operator $\eps$ on~$\H$ need not extend 
to a well-defined grading operator on $\H \ox_\Cc \E$; indeed, if
$\E \isom \Cc^n q$, then $\H \ox_\Cc \E \isom \H^n q$ but
$q \in M_n(\CDA)$ need not be $\eps$-even. Instead, we must use
$\Chat := \pi_{\Dhat}(\cc) = C \ox 1$ and recall that $C$ anticommutes
with~$\D$ since $p$ is even.

Now we again use
$\Dhat = q^\op(\D \ox 1_m)q^\op + q^\op(M \ox 1_m)q^\op$ as an
operator on $(\H_\infty \ox \C^m)q$, where $M$ is a right $\A$-linear
operator on $\H_\infty$. With this description, it is straightforward
to check that $\Dhat \Chat = - \Chat \Dhat$. This proves the
orientation and all of the spin$^c$ condition in the even case.

To complete the proof of finiteness and absolute continuity, we must
display a left $\CDA$-valued inner product on
$\H_\infty \ox_\Cc \E \isom \E$ which captures the scalar product. We
define
$$
\pairL_\Cc(\Phi\ox e_1|\Phi\ox e_2)
:= \pairL_\Cc(\Phi|\Phi)\, \Theta_{e_1,e_2}
= \Theta_{e_1,e_2}\, \pairL_\Cc(\Phi|\Phi), \word{for} e_1,e_2 \in \E,
$$
using on the right hand side the given $\CDA$-valued inner product
on $\H_\infty$ and $\Theta_{e_1,e_2} \in \CDA$. Since
$z = \pairL_\Cc(\Phi|\Phi)$ is central and strictly positive, we see
that the new inner product is well defined. To show that this inner
product captures the scalar product on
$\H_\infty \ox_\Cc \E \isom \E$, we compute
\begin{align*}
\scal< \Phi \ox e_1 | \Phi \ox e_2 >_{\H\ox_\Cc\E}
&= \scal< \Phi | \Phi \Theta_{e_1,e_2} >_\H
= \scal< J(\Phi\Theta_{e_1,e_2}) | J\Phi >
\quad\text{as $J$ is anti-unitary}
\\
&= \scal< \Theta_{e_2,e_1}\Phi | J\Phi >
\word{since}  w\Phi = wJ\Phi = Jw^{*\op}\Phi = J(\Phi w^*)
\\
&= \scal< \Theta_{e_2,e_1}\Phi | \Phi >
= \psi_\om \bigl( \pairL_\Cc(\Phi|\Theta_{e_2,e_1}\Phi) \bigr)
\\
&= \psi_\om \bigl( \pairL_\Cc(\Phi|\Phi)\,\Theta_{e_1,e_2} \bigr)
= \psi_\om \bigl( \pairL_\Cc(\Phi\ox e_2|\Phi\ox e_1) \bigr).
\end{align*}
This demonstrates both parts of Condition \ref{cn:finite}.

As in the proof of Theorem~\ref{th:spin-riem}, $\Dhat$ satisfies the
regularity condition for the left action of~$\A$. So in the even case,
we are done.

If $p$ is odd, we take $\Cc := \CDA^+$ as defined in 
Corollary~\ref{cr:odd-big-algebra}. We now write
$\H = \H^+ \oplus \H^-$ where the splitting of~$\H$ is into
$(\pm 1)$-eigenspaces for $C = \pi_\D(\cc)$. We use the fact that
$C \ox 1$ acts as the identity on $\H^+ \ox_\Cc \E$, and
$\eps\,C = -C\,\eps$ in odd dimensions, to deduce that
$$
\eps \ox 1 \word{acts on} \H \ox_\Cc \E =
\begin{pmatrix} \H^+ \ox_\Cc \E \\[\jot] \H^- \ox_\Cc \E \end{pmatrix}
\word{as}  \twobytwo{0}{1}{1}{0}.
$$
Thus $\eps\ox 1$ provides  an isomorphism from $\H^+ \ox_\Cc \E$ to 
$\H^- \ox_\Cc \E$, and conjugation by $\eps$ carries $\CDA^\pm$ onto 
$\CDA^\mp$, by Corollary~\ref{cr:odd-big-algebra}. These facts give us
a Hilbert space \emph{and} left $\CDA$-module isomorphism
\begin{equation}
\H \ox_\Cc \E \isom \H^+ \ox_\Cc \E \ox \C^2.
\label{eq:gotta-c2} 
\end{equation}
The copy of $\C^2$ in Equation \eqref{eq:gotta-c2} is graded by the
action of $C \ox 1$ as $\twobytwo{1}{0}{0}{-1}$. The two-dimensional
algebra $\C\langle 1, \eps\rangle \isom \Cliff_1$ acts on~$\C^2$ with
$\eps$ acting as $\twobytwo{0}{1}{1}{0}$.

The operator $\Dhat$ anticommutes with $\eps \ox 1$ and commutes with
$C \ox 1$, so in the two-by-two matrix picture, it must be of the form
$\Dhat =: \twobytwo{\Dhat'}{0}{0}{-\Dhat'}$ for a selfadjoint operator
$\Dhat'$. The relations satisfied by $\Dhat$ now imply that we obtain
an unbounded even Kasparov module for the algebra $\A \ox \Cliff_1$.

Now by the discussion in the Appendix, $(\A, \H^+ \ox_\Cc \E, \Dhat')$
is an odd spectral triple. Similarly to the even case, we find that
$\H^+ \ox_\Cc \E \isom \E$ via 
$P_+ w\Phi \ox e \mapsto P_+ w P_+ e$, since $P_+ = \half(1 + C)$ is
central and $\E$ is a pre-Morita equivalence bimodule from $\CDA^+$ to
$\A$. Hence the spectral triple $(\A, \H^+ \ox_\Cc \E, \Dhat')$
satisfies the spin$^c$ condition.

Using $\la \in KK(A \ox C^\op, \C)$ and
$\tau = [\E \ox \C^2_p] \in KK(A^\op \ox \Cliff_1^p, C^\op)$, we 
compose the isomorphisms $- \ox_A \la : KK^j(\C,A) \to KK^j(C^\op,\C)$
and 
$\tau \ox_{C^\op} - : KK^j(C^\op,\C) \to KK^j(A^\op\ox\Cliff_1^p,\C)$
and thereby get an isomorphism
$$
- \ox_A (\tau \ox_{C^\op} \la) = \tau \ox_{C^\op} (- \ox_A \la) 
: KK^j(\C, A) \to KK^j(A^\op \ox \Cliff_1^p, \C) \isom KK^{j+p}(A,\C).
$$
Thus the class $\mu := \tau \ox_{C^\op} \la$ satisfies the spin$^c$
Poincar\'e duality condition.
\qed

\appendix

\addtocontents{toc}{\vspace{-6pt}}

\section{Appendix on odd $KK$-classes}
\label{app:KK-odd}

Suppose that $(\A,\H,\D)$ is an odd spectral triple for the ungraded
algebra $\A$. This defines a class in
$KK^1(A, \C) \isom KK^0(A \ox \Cliff_1, \C)$, and we present a
$\Z_2$-graded even spectral triple for $\A \ox \Cliff_1$ representing
$(\A,\H,\D)$ in the even $KK$-group.

The representative we use is
$$
\biggl( \A \ox \Cliff_1, \H \ox \C^2, \D' = \twobytwo{\D}{0}{0}{-\D},
\Ga' = \twobytwo{0}{1}{1}{0} \biggr),
$$
where $\Cliff_1$ is generated by $\twobytwo{0}{-i}{i}{0}$. In
\cite[Prop.~IV.A.13]{Book}, Connes employs a different representative,
namely
$$
\biggl( \A \ox \Cliff_1, \H \ox \C^2, \D'' = \twobytwo{0}{-i\D}{i\D}{0},
\Ga'' = \twobytwo{1}{0}{0}{-1} \biggr),
$$
with $\Cliff_1$ generated by $\twobytwo{0}{1}{1}{0}$.

These two representatives define the same $KK$-class. To see this,
first employ the unitary equivalence defined by
$U = \twobytwo{1}{0}{0}{i}$, which conjugates Connes' representative
to
$$
\biggl( \A \ox \Cliff_1, \H \ox \C^2, \D''' = \twobytwo{0}{-\D}{-\D}{0},
\Ga'' = \twobytwo{1}{0}{0}{-1} \biggr),
$$
with $\Cliff_1$ now generated by $\twobytwo{0}{-i}{i}{0}$. Next, we
employ the homotopy
$$
\biggl( \A \ox \Cliff_1, \H \ox \C^2, 
\D_t = \twobytwo{\D\sin t}{-\D\cos t}{-\D\cos t}{-\D\sin t}, 
\Ga_t = \twobytwo{\cos t}{\sin t}{\sin t}{-\cos t} \biggr), \quad
0 \leq t \leq \frac{\pi}{2}.
$$
This homotopy of $\Z_2$-graded spectral triples takes us from
$\D''',\,\Ga''$ to $\D',\,\Ga'$, and so the equality of the
$KK$-classes is established. The same argument can be carried through
unchanged for the associated Kasparov modules defined by applying the
real function $x \mapsto x(1 + x^2)^{-1/2}$ to the various operators
$\D$, $\D'$, $\D''$, $\D'''$.

Now given an even ($\Z_2$-graded) Kasparov module for the
$\Z_2$-graded algebra $A \ox \Cliff_1$ and~$\C$, we can take a
representative of the form
\begin{equation}
\biggl( A \ox \Cliff_1, \H \ox \C^2, \twobytwo{F}{0}{0}{-F},
\Ga' = \twobytwo{0}{1}{1}{0} \biggr),
\label{eq:kas-mod} 
\end{equation}
by taking $\Cliff_1$ to be generated by $\twobytwo{0}{-i}{i}{0}$. Then
an odd Kasparov module for the ungraded algebra $A$, representing the
class in $KK^1(A,\C)$ obtained from the class of the Kasparov module
\eqref{eq:kas-mod} via the isomorphism
$KK^0(A \ox \Cliff_1, \C) \isom KK^1(A, \C)$, is given by $(A,\H,F)$.

Replacing the bounded operator $F$ by an unbounded operator $\D$ does
not alter the discussion.

\end{document}